\newtheorem{theorem}{Theorem}[section]
\newtheorem{corollary}[theorem]{Corollary} 
\newtheorem{lemma}[theorem]{Lemma}
\newtheorem{proposition}[theorem]{Proposition}
\theoremstyle{definition}
\newtheorem{definition}[theorem]{Definition}
\newtheorem{remark}[theorem]{Remark}
\newtheorem{problem}[theorem]{Problem}
\crefname{Algorithm}{Algorithm}{Algorithms}
\newcommand{\Rep}{{\rm Rep}}
\newcommand{\mat}{\operatorname{Mat}}
\newcommand{\RR}{\mathbb{R}}
\newcommand{\R}{\mathbb{R}}
\newcommand{\QQ}{\mathbb{Q}}
\newcommand{\C}{\mathbb{C}}
\newcommand{\Z}{\mathbb{Z}}
\newcommand{\N}{\mathbb{N}}
\newcommand{\SL}{\operatorname{SL}}
\newcommand{\tr}{\operatorname{Tr}}
\newcommand{\poly}{\operatorname{poly}}
\newcommand{\ot}{\otimes}
\newcommand{\htheta}{\widehat{\Theta}}
\newcommand{\PD}{{\rm PD}}
\newcommand{\End}{{\rm End}}
\newcommand{\ext}{{\rm ext}}
\newcommand{\Ext}{{\rm Ext}}
\newcommand{\opp}{{\rm opp}}
\newcommand{\Mat}{\operatorname{Mat}}
\newcommand{\GL}{\operatorname{GL}}
\newcommand{\SI}{\operatorname{SI}}
\title{iPCA and stability of star quivers}
\author{Cole Franks and Visu Makam}
\begin{document}

\begin{abstract}
Integrated principal components analysis, or iPCA, is an unsupervised learning technique for grouped vector data recently defined by Tang and Allen \cite{tang2018integrated}. Like PCA, iPCA computes new axes that best explain the variance of the data, but iPCA is designed to handle corrupting influences by the elements within each group on one another - e.g. data about students at a school grouped into classrooms. Tang and Allen showed empirically that regularized iPCA finds useful features for such grouped data in practice. However, it is not yet known when unregularized iPCA generically exists. For contrast, PCA (which is a special case of iPCA) typically exists whenever the number of data points exceeds the dimension. We study this question and find that the answer is significantly more complicated than it is for PCA. Despite this complexity, we find simple sufficient conditions for a very useful case - when the groups are no more than half as large as the dimension and the total number of data points exceeds the dimension, iPCA generically exists. We also fully characterize the existence of iPCA in case all the groups are the same size. When all groups are not the same size, however, we find that the group sizes for which iPCA generically exists are the integral points in a non-convex union of polyhedral cones. Nonetheless, we exhibit an algorithm to decide whether iPCA generically exists that is polynomial in the node dimensions (based on the affirmative answer for the saturation conjecture by \cite{knutson1999honeycomb}) as well as a very simple randomized algorithm. 

At its core, our approach identifies connections between iPCA and stability notions for star quivers, thus bringing tools from invariant theory and quiver representations to the table. Approaching invariant theory and quiver representations from a computational perspective is a very rich endeavor and has found many important applications in complexity theory and algebraic statistics \cite{IQS2,GGOW16,DM,forbes2013explicit,AKRS,derksen2020maximum} in the last decade. Our work identifies another interesting and important algorithmic problem in the invariant theory of quivers, i.e., given a dimension vector $\alpha$ and a weight $\sigma$ for a quiver $Q$, decide if $\alpha$ is $\sigma$-semistable/polystable/stable. While current techniques are insufficient to give a polynomial time algorithm in any reasonable generality, we are able to leverage several interesting features of the representations of star quivers (the setting relevant to iPCA) to obtain algorithms that are polynomial time in the node dimensions by appealing to powerful results in optimization. In the future, we hope to build on these techniques to give polynomial time algorithms for deciding $\sigma$ stabilities of a dimension vector in greater generality. 
\end{abstract}
\maketitle

\setcounter{tocdepth}{1}
\tableofcontents

%\subsection{}
\section{Introduction}

Connections between stability notions in invariant theory and maximum likelihood estimation for certain Gaussian models was discovered recently in \cite{AKRS}. As one often finds with such an unexpected and deep connection, this allows for the infusion of techniques from invariant theory in statistics. This has already led to new and exciting results in statistics in a short period of time, see \cite{derksen2020maximum, derksen2020maximum2,franks2021near,franks2020rigorous}, with more results sure to come in the future as it remains an active area of research. 

Following in this spirit, we exploit the connection between invariant theory and maximum likelihood estimation to characterize and efficiently decide the existence of iPCA, a recently published unsupervised learning method. 
%, and in the process obtain new results about quiver representations. \CF{I 
Interestingly, the nature of the problems we pursue with respect to iPCA identifies a new algorithmic direction in the invariant theory of quivers giving evidence that such a connection can be beneficial in both directions. In particular, we obtain new results in quiver representations. Thus, our results are of interest to both statisticians and quiver theorists alike. We present our main results in a fashion that can be appreciated by a wide audience, yet we do not shy away from rigorously presenting the techniques we develop in the language of quiver representations - we would like this paper to serve as a motivation for quiver theorists to pursue the algorithmic problems we identify and build on our results. We include precise references to the literature for quiver theory that enables any reader to understand the relevant results from quivers should they choose to do so. To this end, our introduction begins with a very accessible discussion of iPCA and our results, followed by a short indication of our approach via quivers, the details of which will form the core of the rest of the paper.  

\subsection{Integrated Principal Components Analysis}
Integrated principal components analysis, or iPCA, is an unsupervised learning technique for grouped vector data recently defined by Tang and Allen \cite{tang2018integrated}. Like PCA, iPCA computes new axes that best explain the variance of the data, but iPCA is designed to handle corrupting influences by the elements within each group on one another. In particular, the axes chosen by iPCA remain the same even when each data point is replaced by an unknown linear combination of the data points within its group. Suppose the $n$ data points in $x_1, \dots, x_n \in \RR^p$ are split into $k$ groups, with the $i^{th}$ group of size $q_i$. Let $B_i$ be the $p \times q_i$ matrix whose columns are the data points in the $i^{th}$ group. See \cref{fig:groups}
\FloatBarrier

The iPC scores $u_i \in \RR^{d}$ are defined as the eigenvectors (in increasing order of eigenvalue) of the positive-definite matrix $\htheta$ defined as follows. For reasons to be explained shortly, we call this the \emph{ipca likelihood}.
%$$ \Theta  = \arg\min_{\Theta \succ 0}\frac{\prod_{i = 1}^n \det (B_i^\top \Theta B_i)}{\det(\Theta)^{n/d}}.$$
\begin{align}\htheta, \htheta_1, \dots, \htheta_k  &= \arg\max f(\Theta, \Theta_1, \dots, \Theta_m)\\
\text{ where } f(\Theta, \Theta_1, \dots, \Theta_k) &= n \log|\Theta| + p \sum_{i = 1}^m \log |\Theta_i| - \sum_{i = 1}^k \tr (\Theta B_i \Theta_i B_i^\dag). \label{eq:ipca-def}
\end{align}
Here $|X|$ denotes the determinant of $X$. The minimization ranges over $p\times p$ positive definite matrices $\Theta$ and $q_i\times q_i$ positive definite matrices $\Theta_i$. For comparison, the PCA axes are the eigenvectors of the matrix $\htheta$ obtained by restricting each $\Theta_i$ to be the $q_i \times q_i$ identity matrix in the above optimization problem. We call $\htheta, \htheta_1, \dots, \htheta_k$ the \emph{iPCA precision matrices}. Usually the main focus is in estimating $\Theta$; the remaining parameters are thought of as nuisance parameters.
%\CF{we may never need this notation - let's see} The eigenvectors of $\htheta_1, \dots, \htheta_k$ are called \emph{iPC loadings}, while the eigenvectors of $\htheta$ are called \emph{iPC scores}.

\begin{figure}
$$\begin{array}{ccccc}
 \underset{B_1}{\left[\begin{array}{ccc} | &  \dots & | \\ x_1 & \dots & x_{q_1} \\  | &  \dots & |  \end{array}\right]}, & \underset{B_2}{\left[\begin{array}{ccc} | &  \dots & | \\ x_{q_1 + 1} & \dots & x_{q_1 + q_2} \\  | &  \dots & |  \end{array}\right]}, & \dots &, \underset{B_k}{\left[\begin{array}{ccc} | &  \dots & | \\ x_{n - q_k + 1} & \dots & x_{n} \\  | &  \dots & |  \end{array}\right]} & \hspace{-.3cm}\left.\vphantom{\left[\begin{array}{ccc} | &  \dots & | \\ x_1 & \dots & x_{q_1} \\  | &  \dots & |  \end{array}\right]}\right\}p
\end{array}$$
\caption{Data points grouped as matrices.}\label{fig:groups}
\end{figure}

Like PCA, the iPCA precision matrices can be interpreted as a maximum likelihood estimator (MLE) for the precision matrix of a centered Gaussian random variable. Whereas the PCA axes are eigenvectors of the MLE $\htheta$ for the precision matrix assuming the data points are drawn independently from a centered Gaussian, the iPCA precisions $\htheta, \htheta_1, \dots, \htheta_k$ are the MLE for the precision matrix of a Gaussian with certain dependencies within each group. More precisely, we assume that each matrix $B_i$ is drawn independently from a centered Gaussian with precision matrix given by the Kronecker product $\Theta \ot \Theta_i$ for a $p\times p$ positive definite matrix $\Theta$ and a $q_i\times q_i$ positive definite matrix $\Theta_i$ (viewing $B_i$ as a vector in $\RR^{p \cdot q_i}$). Equivalently, $B_i$ is distributed as the $p\times q_i$ matrix $\Theta^{-1/2} Y \Theta_i^{-1/2}$ where $Y$ is a $p\times q_i$ matrix with independent, standard normal entries. Under this assumption, $\htheta \ot \htheta_i$ is the maximum likelihood estimator for $\Theta \ot \Theta_i$. 

\FloatBarrier

The maximum likelihood interpretation of iPCA captures several estimation problems of interest. The case $m = 1$ is the \emph{matrix normal model}, or the estimation of a covariance/precision matrix that can be written as a Kronecker product. The matrix normal model is used to model matrix-variate data that arises naturally in numerous applications like gene microarrays, spatio-temporal data, and brain imaging \cite{werner2008estimation}. On the other hand, when $q_i = 1$ for all $1 \leq i \leq m$, one finds that $\htheta$ is proportional to \emph{Tyler's M estimator} for the shape of an elliptical distribution \cite{tyler1987distribution}. iPCA interpolates between these two situations. We can interpret iPCA as an estimator for the precision matrix $\Theta$ that is robust to each data point in the group being replaced by a linear combination of the other data points in its group (i.e. replacing $B_i$ by $B_i \Theta_i^{-1/2}$) which can be chosen adversarially after seeing the data. As illustrated by Allen and Tang \cite{tang2018integrated}, such corruptions can obscure the true precision matrix from PCA and other methods for integrating groups of data. For a detailed discussion on the advantages and usefulness of iPCA and its strong performance in case studies, we refer the reader to \cite{tang2018integrated} where iPCA was used as a preprocessing method in a machine learning pipeline for predicting patients' cognition and Alzheimer's diagnosis.

%More precisely, iPCA recovers the axes that best explain the variance for the true data even under the assumption that the data the experimenter sees in each group is an unknown linear combination of the true data points within the group. 

\subsection{Existence and uniqueness of iPCA}
Despite the practical utility of iPCA, the existence and uniqueness of iPCA is poorly understood. Like PCA, iPCA need not exist if the data is too pathological. However, it is known that PCA exists for \emph{generic} data whenever there are as many samples as there are features. In contrast, it was not known in which dimensions and for which sizes of groups iPCA exists for generic data. In the extreme case when all the groups are of size one (Tyler's M estimator), it is known that iPCA generically exists whenever the number $n$ of data points is strictly larger than the dimension $d$ \cite{tyler1987distribution}. On the other extreme, the number of samples required for the MLE to generically exist uniquely for the matrix normal model was found exactly in \cite{derksen2020maximum}, and is on the order of $p/q + q/p$. In \cite{tang2018integrated}, Tang and Allen largely bypass the question of existence and uniqueness of iPCA by considering instead a regularized version of \cref{eq:ipca-def}.

In this paper we characterize the existence and uniqueness of iPCA. We say iPCA generically exists for dimension $p$ and group sizes $q_1, \dots, q_k$ if the iPCA precisions defined in \cref{eq:ipca-def} exist and are unique for generic $B_1, \dots, B_k$. In particular, if iPCA generically exists uniquely then the iPCA precisions exist with probability $1$ if $B_1, \dots, B_k$ is chosen according to any distribution on $\mat(p, q_1)\times\dots \times \mat(p, q_k)$ that has a density with respect to the Lebesgue measure. The phrases `iPCA generically exists uniquely' and `\cref{eq:ipca-def} is generically bounded/unbounded above' are defined similarly. Our first result is a simple sufficient condition for the generic existence and uniqueness of iPCA.

\begin{theorem}\label{thm:main} iPCA generically exists uniquely for $p, q_1, \dots, q_k$ provided 
$$ q_i \leq \frac{p}{2} \text{ and } 2 p \leq n .$$
\end{theorem}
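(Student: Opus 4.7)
The plan is to recast iPCA existence as a question of $\sigma$-stability for representations of the star quiver $Q$ with central vertex of dimension $p$ and outer vertices of dimensions $q_1, \dots, q_k$ (arrows from outer to central), and then apply King's numerical criterion. Following the correspondence of \cite{AKRS} between maximum-likelihood estimation for Gaussian group models and GIT stability, the data $(B_1, \dots, B_k)$ is a representation of $Q$ on which $\GL(p) \times \prod_i \GL(q_i)$ acts by base change. Reading the weight $\sigma = (n, -p, \dots, -p)$ off the log-det coefficients in \eqref{eq:ipca-def} and noting that $\sum_i q_i = n$ gives $\sigma \cdot \alpha = 0$ for $\alpha = (p, q_1, \dots, q_k)$, generic existence and uniqueness of iPCA translates into the generic representation of $Q$ being $\sigma$-stable.

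Subrepresentations of the star quiver are tuples $(U, V_1, \dots, V_k)$ with $U \subseteq \CC^p$, $V_i \subseteq \CC^{q_i}$ and $B_i(V_i) \subseteq U$, and King's criterion requires $n \dim U - p \sum_i \dim V_i < 0$ (up to sign convention) for every proper nonzero subrepresentation. For generic $B_i$, which are injective since $q_i \leq p/2 < p$, the largest admissible $\dim V_i$ for a given $U$ equals $\dim(U \cap W_i)$, where $W_i := \mathrm{Im}(B_i) \subseteq \CC^p$ is a generic $q_i$-plane. Thus the stability test boils down to the uniform inequality $p \sum_i \dim(U \cap W_i) < n \dim U$ for all $U$ with $0 < \dim U < p$.

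The hypothesis $q_i \leq p/2$ yields $q_i + q_j \leq p$, so the $W_i$ pairwise intersect trivially for generic $B_i$; the hypothesis $n \geq 2p$ provides the slack in the target inequality. I would combine these to bound $\sum_i \dim(U \cap W_i)$ uniformly in $U$, either by invoking Schofield's theorem on generic subrepresentation dimensions, which exactly characterizes the dimension vectors of subrepresentations of a generic quiver representation, or via a direct Schubert-calculus dimension count in $\mathrm{Gr}(\dim U, p)$ of the locus of $U$ that meet several $W_i$'s nontrivially.

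I expect the main obstacle to be that $\dim(U \cap W_i)$ is controlled only generically in $U$: while a generic $u$-plane satisfies $\dim(U \cap W_i) = \max(0, u + q_i - p)$, special $U$ can achieve strictly larger intersections with several $W_i$ simultaneously, corresponding to higher Schubert incidences. Pairwise disjointness from $q_i \leq p/2$ forbids some such alignments, but a careful case analysis depending on $\dim U$ relative to the $q_i$ will be needed; the bound $n \geq 2p$ should enter as the crucial slack guaranteeing strictness in King's inequality. An inductive argument on $k$ — adding groups one at a time and tracking how maximal $\sigma$-destabilizing subrepresentation dimensions evolve — is a promising way to organize the combinatorics cleanly.
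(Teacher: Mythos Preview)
Your translation to $\sigma$-stability for the star quiver and the identification of King's criterion with the inequality $p\sum_i \dim(U\cap W_i) < n\dim U$ are correct, but your route diverges sharply from the paper's and leaves the hardest step open. The paper never verifies King's inequality directly. Instead it observes that the hypotheses $q_i \le p/2$ and $2p \le n$ are exactly the Cartan-form inequalities $(\alpha,\epsilon_{y_i}) = 2q_i - p \le 0$ and $(\alpha,\epsilon_x) = 2p - n \le 0$, placing $\alpha$ in the fundamental chamber. Kac's theorem then gives that $\alpha$ is a Schur root (with a single boundary exception $\alpha = c(2,1,1,1,1)$, handled separately), and Schofield's theorem (Schur root $\Leftrightarrow$ Schofield stable) finishes the argument in one line. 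No subspace-intersection combinatorics is needed.

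The obstacle you flag is genuine and is precisely what the paper's approach sidesteps. Controlling $\sum_i \dim(U\cap W_i)$ over \emph{all} $U$, not just generic ones, is a Schubert-calculus problem whose answer is not obvious from pairwise transversality of the $W_i$ alone: for instance, at the boundary $k=4$, $q_i = p/2$, a $2$-plane meeting each $W_i$ in a line would violate the strict King inequality, and indeed such $U$ exist generically when $p>2$ (this is the $(2,1,1,1,1)^{\oplus c}$ canonical decomposition, which the paper treats as an exceptional case). So your proposed case analysis or induction on $k$ would have to reproduce, in effect, the content of Kac's fundamental-chamber theorem for this quiver, including the boundary exception. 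That is doable in principle but substantially more laborious than the two-line argument via Kac and Schofield.
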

As iPCA does not exist whenever $p>  n$ or whenever $q_i > p$ for some group $i$, the theorem is tight up to constant factors. When all the groups sizes are the same, we obtain the following necessary and sufficient conditions for the generic unique existence of iPCA. 
\begin{theorem}\label{thm:same-dim}
 Fix $k$. Let $\Gamma(p,q) = kq^2 + p^2 - kpq$. Let $r = {\rm gcd}(p,q)$. Then
\begin{itemize}
\item $\Gamma(p,q) \leq r^2$ implies iPCA generically exists for $(p, q, \dots, q)$.
\item $\Gamma(p,q)  < 0$ or $\Gamma(p,q)  \in \{0,r^2\}$ with $r = 1$ implies iPCA generically exists \emph{uniquely} for $(p, q, \dots, q)$.
\item In all other cases, i.e., $\Gamma(p,q)  > r^2$, \cref{eq:ipca-def} is generically unbounded above (iPCA does not exist).
\end{itemize}

\end{theorem}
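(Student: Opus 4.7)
My plan is to translate the theorem into a $\sigma$-stability question for generic representations of the star quiver $Q$ with central vertex of dimension $p$ and $k$ leaf vertices each of dimension $q$. By the iPCA--stability correspondence (via \cite{AKRS} and the explicit match developed earlier in this paper), with $\alpha = (p, q, \ldots, q)$ and the symmetric weight $\sigma = (-n, p, \ldots, p)$ satisfying $\sigma(\alpha) = 0$, iPCA generically exists (respectively, exists uniquely) iff a generic representation of dimension $\alpha$ is $\sigma$-polystable (respectively, $\sigma$-stable), and $f$ is generically unbounded above iff the generic representation is not $\sigma$-semistable. The first key observation is that $\Gamma(p,q) = \langle \alpha, \alpha \rangle_Q$ is the Tits form of $Q$ on $\alpha$; writing $\alpha = r\alpha^\ast$ with $\alpha^\ast = \alpha/r$ the primitive symmetric dimension vector yields $\langle \alpha^\ast, \alpha^\ast \rangle = \Gamma/r^2$, an integer, and by Kac's theorem $\alpha^\ast$ is a (necessarily Schur) positive root of $Q$ exactly when this integer is at most $1$, i.e., exactly when $\Gamma \le r^2$.

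I would then carry out a case analysis using the canonical decomposition of $\alpha$. When $\Gamma < 0$, $\alpha^\ast$ is an imaginary non-isotropic Schur root and so is $\alpha = r\alpha^\ast$; the generic representation of $\alpha$ is then a single indecomposable Schur which is $\sigma$-stable because $\sigma(\alpha) = 0$, giving unique existence. When $\Gamma \in \{0, r^2\}$, $\alpha^\ast$ is a Schur root (isotropic imaginary if $\Gamma = 0$, real if $\Gamma = r^2$), but $\alpha = r\alpha^\ast$ is not a Schur root for $r \ge 2$; its canonical decomposition is $r$ copies of a Schur representation of dimension $\alpha^\ast$, each lying in $\ker \sigma$ and $\sigma$-stable, so the generic representation is $\sigma$-polystable. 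This establishes the first bullet, and the second follows as the subcase where the decomposition has a single summand, namely $\Gamma < 0$, or $r = 1$ with $\Gamma \in \{0, r^2\}$.

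The main obstacle is the third bullet, where $\Gamma > r^2$ forces $\alpha^\ast$ to fail to be a positive root of $Q$. My plan here is to argue that the canonical decomposition of $\alpha$ must then contain a Schur summand $\beta$ with $\sigma(\beta) \ne 0$, which provides a destabilizing sub-representation of the generic rep and hence forces $f$ to be unbounded above. The technical crux is a symmetry argument exploiting the $S_k$-action permuting the leaves: the multiset of canonical summands is $S_k$-invariant, so if every summand lay in $\ker \sigma$, combining a full $S_k$-orbit of summands would produce a symmetric piece of dimension a positive integer multiple of $\alpha^\ast$; matching multiplicities and invoking Kac's theorem together with the Schur-root structure of star quivers would then force $\alpha^\ast$ itself to be a root of $Q$, contradicting $\Gamma > r^2$. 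Making this argument precise --- in particular handling the Dynkin cases $k \le 3$ (where all Schur roots are real) and the wild cases $k \ge 5$ (where imaginary Schur roots of large dimension appear) via Schofield's recursive computation of the canonical decomposition and the \emph{$\sigma$-stable decomposition} of Derksen--Weyman --- is the technically most demanding step.
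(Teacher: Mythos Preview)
Your approach differs substantially from the paper's. You attempt a direct root-theoretic argument: identify $\Gamma(p,q)=\langle\alpha,\alpha\rangle$, write $\alpha=r\alpha^\ast$ with $\alpha^\ast$ primitive, and invoke ``Kac's theorem'' to conclude that $\alpha^\ast$ is a Schur root exactly when $\langle\alpha^\ast,\alpha^\ast\rangle\le 1$, then read off the canonical decomposition of $\alpha$ from Schofield's scaling theorem. The paper instead runs a reflection-functor (castling) reduction: it shows Schofield semistability/polystability/stability of $\alpha(p,q)$ is preserved under $(p,q)\mapsto(kq-p,q)$ and $(p,q)\mapsto(p,p-q)$ (Corollary~\ref{cor:r_x-stability-preserve}, Proposition~\ref{prop:r_y-stability-preserve}), observes that $\Gamma$ and $\gcd(p,q)$ are invariants of the resulting equivalence $\sim_k$, and proves every $\sim_k$-class contains a minimal $(p',q')$ lying either in the fundamental chamber (where Theorem~\ref{thm:kac-fundamentalchamber} applies) or on an explicitly handled boundary $q'=p'$, $p'=kq'$, $q'>p'$, or $p'>kq'$ (Proposition~\ref{prop:castling}, Lemma~\ref{lem:Gamma-computation}).

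The gap in your plan is precisely the step you attribute to Kac. Kac's theorem gives only the easy direction: every root satisfies $\langle\alpha,\alpha\rangle\le 1$. The converse for a primitive symmetric $\alpha^\ast$ on $\Delta[k]$ --- that $\langle\alpha^\ast,\alpha^\ast\rangle\le 1$ forces $\alpha^\ast$ to be a \emph{Schur} root for this orientation --- is not a statement of Kac's theorem and is exactly the content of the paper's castling reduction. Nothing in Kac's description of imaginary roots tells you a priori that a primitive symmetric vector with $\langle\alpha^\ast,\alpha^\ast\rangle<0$ is Weyl-equivalent to an element of the fundamental set; the witnessing Weyl element is built from the very moves $s_x$ and $s_{y_1}\cdots s_{y_k}$ that the paper's Proposition~\ref{prop:castling} formalises. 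So your first two bullets already rest on an unproved claim. Your $S_k$-symmetry sketch for the third bullet has a related gap: summing an $S_k$-orbit of canonical summands only produces a symmetric vector, not a root, and the passage from ``all summands lie in $\ker\sigma$'' to ``$\alpha^\ast$ is a root'' is where the paper's explicit boundary analysis (the $p=q$ and $p=kq$ decompositions in Proposition~\ref{prop:castling}) does real work that your outline does not supply.
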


When the group have differing sizes, the characterization is more complex. Nonetheless, well-known polyhedral characterizations of quiver stability imply a polyhedral characterization for the existence and uniqueness of iPCA. The conditions for existence and uniqueness will require checking firstly that the dimension vector resides in a certain cone, which will be a union of polyhedral cones, and secondly that certain divisibility conditions are satisfied. To this end, we define the \emph{region of generic unique existence} as the set 
$$S = \{p,q_1,\dots,q_k \in \R^{k + 1}:\text{iPCA generically exists uniquely for } p,q_1,\dots,q_k\} \subset \R^{k + 1} $$
and the \emph{cone of generic unique existence} as $\R_{\geq 0} S = \{a v: a \geq 0, v \in S\}$.

\begin{theorem}\label{thm:polyhedral}
Suppose $q_1 + \dots + q_k = n$. Let $d = n/p$.

\begin{enumerate}
    \item iPCA generically exists for $p,q_1,\dots,q_k$ if and only if \cref{eq:ipca-def} generically bounded above.
    
    \item There is a convex polyhedral cone $\Sigma_{d,k} \subset \RR^{k+1}$ such that iPCA exists for $p, q_1, \dots, q_k$ if and only if
$(p,q_1, \dots, q_k) \in \Sigma_{d,k}.$  

\item The cone of generic unique existence for iPCA is a union of finitely many convex polyhedral cones. Moreover, generic existence and uniqueness of iPCA for $p,q_1,\dots,q_k$ can be decided in $\poly(p,n, q_1, \dots, q_k)$.
\end{enumerate}

\end{theorem}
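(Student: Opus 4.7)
The plan is to identify iPCA with a stability question for the star quiver $S_k$: the quiver with central vertex of dimension $p$, $k$ leaf vertices of dimensions $q_1,\dots,q_k$, and a single arrow from each leaf into the center. The group $G = \GL(p)\times\prod_{i=1}^k\GL(q_i)$ acts by change of basis on $B = (B_1,\dots,B_k)\in\prod_i\mat(p,q_i)$. With $d = n/p$, pick the character $\sigma$ of $G$ with weight $d$ on the center and $-1$ on each leaf, which vanishes on the diagonal scalar subgroup because $\sum q_i = n = dp$. Via the MLE--stability dictionary of \cite{AKRS}, the iPCA likelihood \cref{eq:ipca-def} corresponds to the Kempf--Ness function for this action and character; consequently the supremum in \cref{eq:ipca-def} is finite iff $B$ is $\sigma$-semistable, it is attained iff $B$ is $\sigma$-polystable, and it is attained uniquely modulo diagonal scalars iff $B$ is $\sigma$-stable with trivial stabilizer modulo scalars.

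For part (1), the key observation is that the $\sigma$-polystable locus is dense in the $\sigma$-semistable locus whenever the latter is non-empty, so generic boundedness of \cref{eq:ipca-def} (generic semistability) implies generic attainment (generic polystability). I would deduce this from the standard fact, via Luna's slice theorem or upper semicontinuity of stabilizer dimension, that non-polystable semistable orbits are strictly smaller than the generic orbit in a semistable dimension vector. For part (2), King's criterion characterizes $\sigma$-semistability of a representation of dimension $\alpha=(p,q_1,\dots,q_k)$ by the inequalities $\sigma(\beta)\leq 0$ over sub-dimension vectors $\beta\leq\alpha$; the dimension vectors admitting a $\sigma$-semistable representation are cut out by these inequalities ranging over the \emph{generic} sub-dimension vectors, which are themselves described polyhedrally via Schofield's criterion ($\ext(\beta,\alpha-\beta)=0$). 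This yields the polyhedral cone $\Sigma_{d,k}$ after restriction to the slice $\sum q_i = dp$.

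For part (3), generic unique existence amounts to $\alpha$ being $\sigma$-stable with trivial generic stabilizer modulo diagonal scalars; by the Schofield--Derksen--Weyman theory of canonical decompositions into Schur roots, the dimension vectors satisfying these conditions split into finitely many rational polyhedral cones according to decomposition type. For the polynomial-time algorithm, membership in $\Sigma_{d,k}$ reduces via King's criterion to a linear program whose separation oracle must certify positivity of certain Littlewood--Richardson-type coefficients; by the saturation theorem of \cite{knutson1999honeycomb}, such positivity reduces in turn to polynomial-size LP feasibility, so the overall test is polynomial time. The main obstacle is the uniqueness side: to locate $\alpha$ within the correct polyhedral piece of the union, one must compute (at least partially) the canonical decomposition of $\alpha$, which I would do by iteratively peeling off Schur-root summands using the same LP machinery, following Schofield's algorithmic recipe adapted to the star-quiver setting.
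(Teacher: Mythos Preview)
Your overall framework is right—star quiver, King's criterion, saturation for the LP—but there is a genuine gap in part (1), and your route to part (3) diverges from the paper's.

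For part (1), you claim as a ``standard fact'' (via Luna's slice theorem or stabilizer semicontinuity) that the $\sigma$-polystable locus is dense in the $\sigma$-semistable locus. Neither argument establishes this: stabilizer semicontinuity says the generic orbit has maximal dimension, but a maximal-dimensional orbit need not be closed, and Luna's theorem describes local structure near a closed orbit without forcing the generic orbit to be closed. The paper proves the equivalence only for star quivers with the Schofield weight (Proposition~\ref{prop:sigma-ss-ps}) by a specific argument: writing the canonical decomposition $\alpha=\beta_1\oplus\cdots\oplus\beta_l$, semistability forces $\sigma\cdot\beta_i=0$ for every $i$, and the star-quiver-specific observation (Lemma~\ref{lem:schofieldweight-identify}) is that $\sigma$ restricted to the support of $\beta_i$ is again, up to a positive scalar, the Schofield weight of $\beta_i$. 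Hence each Schur root $\beta_i$ is $\sigma$-stable by Schofield's theorem, so $\alpha$ is $\sigma$-polystable. Immediately after this proposition the paper explicitly poses the general-quiver version as an open problem, so your appeal to a general principle is not justified.

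For part (3), you propose to detect stability by computing the canonical decomposition, ``peeling off Schur-root summands.'' The paper instead uses the Derksen--Weyman characterization of $\sigma$-stability via extremal rays of $\overline{\Sigma}(Q,\sigma)$ (Theorem~\ref{thm:DW-stability}): one writes $\alpha$ as a positive rational combination of linearly independent extremal generators $\delta_1,\dots,\delta_t$ using the optimization oracle for the cone (Corollary~\ref{cor:opt-oracle}) together with the Gr\"otschel--Lov\'asz--Schrijver machinery (Theorem~\ref{thm:mem-vectors}), and then checks explicit sign conditions on $\langle\delta_i,\alpha\rangle$, $\langle\alpha,\delta_i\rangle$ and path-connectivity of an auxiliary graph. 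This is what yields both the finite-union-of-cones structure (Corollary~\ref{cor:union-cones}) and the polynomial-time decision procedure. Your approach might be made to work, but you have not argued the running-time bound, and the extremal-ray route is more direct. You also do not address the passage from $\C$ to $\R$: over $\R$, unique MLE is not a priori equivalent to stability, and the paper handles this separately (Theorem~\ref{thm:realipca-to-sigma} and the appendix).
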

\begin{remark}[Time complexity]
As the algorithm in Theorem \ref{thm:polyhedral} depends polynomially on $q_1, \dots, q_k$ rather than the total length of their binary encodings, it is not technically a polynomial time algorithm. However, it \emph{is} polynomial time in the input size to the iPCA estimation problem which is a vector of dimension $n \cdot (\sum_i q_i)$. For this reason we abuse notation and refer to this algorithm as polynomial time in the remainder of this work.
\end{remark}

\FloatBarrier

The polyhedral characterization given by the theorem suggests that $p, q_1,\dots, q_k$ need not span a convex cone, but rather a union of convex cones which may or may not be convex. \cref{fig:nonconvex} shows a non-convex example.

Though the above theorem gives an efficient deterministic algorithm to decide generic existence and uniqueness of iPCA, this algorithm is rather complicated. We also provide a simple randomized algorithm to decide generic existence and uniqueness of iPCA. Computing the iPCA precisions is an instance of an algorithmic problem called \emph{operator scaling}, which is known to be solvable to an arbitrary degree of precision in polynomial time \cite{GGOW16}. 
%Our randomized algorithm does not use operator scaling, however, because the existence of arbitrarily precise approximate minimizers in \cref{eq:ipca-def} is not equivalent to the uniqueness or even existence of exact minimizers. \CF{well... it might use it to check semistability first.}

\begin{figure}
\includegraphics[width=.5\textwidth]{./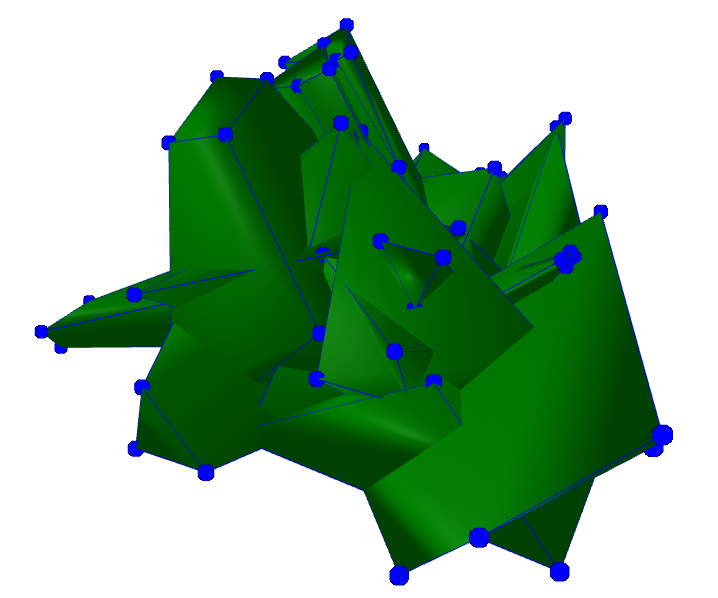}
\caption{The intersection of the cone of generic unique existence for $k=5$ in $\R^6$ with a random three-dimensional subspace through the origin. 
%\CF{include which slice for reproducibility?} 
}\label{fig:nonconvex}
\end{figure}

\begin{remark}[Consistency of iPCA] Interpreting iPCA as a maximum likelihood estimator, it is natural to ask under what circumstances the maximum likelihood estimator $\hat{\Theta}$ is likely to be close to the true $\Theta$. This question was studied in \cite{tang2018integrated} for regularized versions of iPCA. While this is not the focus of the present paper, the works \cite{franks2021near, franks2020rigorous} provided bounds for error rates for the matrix normal model and Tyler's M estimator, respectively, using tools from geodesically convex optimization. We conjecture that their techniques applied in this setting will yield error bounds comparable to those of PCA under slightly stronger hypotheses than \cref{thm:main}. Namely, we conjecture that for $p = \tilde{\Omega} (\max\{q_1,\dots, q_n\})$ and $n = \tilde{\Omega}(p)$ we have $\| I - \Theta^{-1/2} \hat{\Theta} \Theta^{-1/2}\|_{op} = \tilde{O}(\sqrt{p/n})$ with high probability in $n$. The $\tilde{\Omega},\tilde{\Omega}$ hide polylogarithmic factors in $q_i$ and $p$, respectively. 
%is consistent, namely if $(\hat{\Theta}, \hat{\Theta}_1, \dots, \hat{\Theta}_k) \to (\Theta, \Theta_1, \dots, \Theta_k)$ in probability.
\FloatBarrier
\end{remark}

\subsection{iPCA and stability for star quivers}

Our central observation is that the existence and uniqueness of iPCA can be interpreted in terms of \emph{quiver representations}. A quiver representation is a collection of vector spaces together with a collection of linear maps between the vector spaces. The spaces and directions of linear maps are represented by a directed multigraph. The iPCA problem arises from the so-called \emph{star quiver} (see the picture below). Given data $B_1 \in \mat_{p,q_1}, \dots, B_m \in \mat_{p,q_m}$, we form a quiver representation in a natural way. The vertices are the spaces $\RR^{p}, \RR^{q_1}, \dots, \RR^{q_m}$, and for $i \in [m]$ there is a directed edge from $\RR^{q_i}$ to $\RR^{p}$ corresponding to the linear map $B_i$:

\begin{equation}\begin{tikzcd}
& \RR^{p} & &  \\
\RR^{q_1} \arrow[ur, ->, "B_1"] &  \RR^{q_2}\arrow[u, ->,"B_2"] &  \dots & \RR^{q_k} \arrow[ull, ->,"B_k" above]
%\& \mu(X)_{2} \& \&
\end{tikzcd}\label{eq:quiver}\end{equation}

The existence and uniqueness of iPCA is captured by {\em stability} notions on the associated quiver representations. Stability is a technical notion from geometric invariant theory, whose definition we postpone to Section~\ref{sec:quiv}. A surprising and extremely useful link between maximum likelihood estimation and stability notions in invariant theory was discovered recently \cite{AKRS}, which we will recall in Section~\ref{sec:it-mle}.

Postponing the exact definitions, we indicate briefly a few things to give the reader an idea of the broad strokes of our approach and the techniques and ideas we draw from. For a quiver $Q$, a dimension vector $\alpha$ and a weight $\sigma$, a natural algorithmic question to ask is whether a generic $\alpha$-dimensional representation of $Q$ is $\sigma$-semistable/polystable/stable. In the case of iPCA, we consider the star quiver as above and take the dimension vector $\alpha = (p,q_1,\dots,q_k)$ as indicated above with the weight $\sigma = (-n,p,\dots,p)$. We find that that $\sigma$-semistability/polystability/stability in this setting is equivalent to having Eq~\ref{eq:ipca-def} generically bounded above/iPCA generically existing/iPCA generically existing uniquely. 

In addition to all the general results from quiver representations that we use, we wish to point out a few important features in this particular setting that are useful to us. With completely unrelated motivations, Schofield had identified a special weight (which we call the Schofield weight) for any dimension vector. The first crucial observation is that for the dimension vector $(p,q_1,\dots,q_k)$ for the star quiver, the Schofield weight is $\sigma = (-n,p,\dots,p)$, the precise weight relevant to iPCA. The second important idea is an overutilization of symmetry. While we naturally view the iPCA data as a representation of the star quiver, it can just as naturally be viewed as a representation of its opposite quiver (i.e., the star quiver with all arrows reversed). Well known to quiver theorists is a non-trivial way to relate the star quiver and its opposite through the use of reflection functors. Combining reflection functors with other obvious symmetries allows us to reduce the number of cases to consider significantly, in particular allows us to prove Theorem~\ref{thm:main}. The reader familiar with castling transforms and prehomogeneous spaces will appreciate the similarity to the classication of prehomogenous spaces of tensors, see e.g., \cite{venturelli2019prehomogeneous}. Finally, we note that generic semistability can be studied via semi-invariants. In the case of interest for iPCA, one can relate the semi-invariants to extremely well studied numbers in algebraic combinatorics called Littlewood-Richardson coefficients. In particular, Knutson and Tao's seminal work on the saturation conjecture and other results that followed provides us with the tools needed for us to exhibit algorithms that run in polynomial time in the node dimensions.

We also feel it is worth mentioning that while iPCA is naturally a problem over the field of real numbers, most known results on quiver representations and stability hold for algebraically closed fields such as the field of complex numbers. This creates a rather subtle difficulty in applying the results on quiver representations directly to iPCA. In fact, this is a problem that one is very likely to encounter while trying to relate stability or invariant theory to statistics in any setting in the manner discovered in \cite{AKRS}. To overcome this problem, one must appeal to some deep and technical results on algebraic groups. Hence, that discussion is kept entirely in the appendix for the sake of brevity and clarity.

\subsection{Related work}
The independent work \cite{chindris2022membership}, which was released during the preparation of this draft, also provides an algorithm for the generic existence and uniqueness of iPCA.

\subsection{Organization and notation} 

In \cref{sec:it-mle} we discuss the relationship between Gaussian group models and stability in invariant theory, and in particular how iPCA can expressed in this language. In \cref{sec:quiv} we review the notions we need about the stability of quivers. In \cref{sec:ipca-star} we relate the generic existence of iPCA to the Scofield stability of star quivers, and then in \cref{sec:scho-star} we prove results about Schofield stability and use them to finally prove Theorem~\ref{thm:main} and Theorem~\ref{thm:same-dim}. In \cref{sec:polytopal} we describe the polytopal characterization of the existence of iPCA and our algorithmic results, proving \cref{thm:polyhedral}.

We use the following symbols. The tuple $(p, q_1, \dots, q_k)$ will denote the dimensions in iPCA, and $n:=\sum_i q_i$.
We use $B_i:\RR^{q_i} \to \RR^{p}$ for the linear maps in iPCA, and $\Theta$, $\Theta_i$ for the solution to iPCA (inverses of what appear in Allen-Tang). Finally we use $\Sigma_{d,n}$ for the polyhedron describing semistability under the weight $(-n, p, \dots, p)$, or equivalently the weight $(-d,1,\dots,1)$.

%\begin{remark}[Use for moment map]\CF{Actually, I think if we are trying to think of iPCA as an analogy of PCA we should be perfectly fine with situations where the MLE doesn't exist. For instance, in PCA all the data can be on a lower dimensional subspace. PCA handles this by outputting a low rank matrix. I think we should be ok with doing something similar for iPCA, I am just not sure how to make it happen. Na\"ively, just scaling to something lower rank (on the sink) seems like it might work.}\end{remark}

%\section{iPCA and the stability of the simply laced star quiver}
%We begin by translating statements about iPCA into notions of quiver stability, which we now define. 
%\begin{definition}[Quiver stability, semistability, and instability]
%\CF{todo}
%\end{definition}
%We use the notion of genericity.
%\begin{definition}[Generic] \CF{todo}

%\end{definition}

%\begin{theorem}Let $Q$ be the quiver described in \cref{eq:quiver}.
%\begin{enumerate}
%\item iPCA generically exists for $d, d_1, \dots, d_m$ if and only if $Q$ is generically semistable.
%\item iPCA generically exists \emph{uniquely} for $d, d_1, \dots, d_m$ if and only if $Q$ is generically stable.
%\item \cref{eq:ipca-def} is unbounded below for $d, d_1, \dots, d_m$ if and only if $Q$ is generically stable.
%\end{enumerate}
%\end{theorem}
%\begin{proof}\CF{todo}

%\end{proof}

%\subsection{Proof of \cref{thm:main}}

%\subsection{Proof of \cref{thm:same-dim}}

%\subsection{Proof of \cref{thm:polyhedral}}

%\subsection{A simple randomized algorithm} 

\section{Invariant theory and MLE for Gaussian group models} \label{sec:it-mle} 
Invariant theory is the study of symmetries captured by group actions. From its very beginnings, there has been significant focus on computation in the subject of invariant theory and as a consequence, invariant theory finds many applications. We refer to \cite{DK} and references therein for a comprehensive introduction to the subject and its applications. In this section, we will recall some basic notions in invariant theory and draw the connection to maximum likelihood estimation that was discovered recently in \cite{AKRS}. 

For this section, let $K = \R$ or $\C$ be the ground field. The basic setting is as follows. A representation of a group $G$ is an action of $G$ on a (finite-dimensional) vector space $V$ (over the field $K$) by linear transformations. Equivalently, a representation can be thought of as a group homomorphism $\rho: G \rightarrow \GL(V)$. In particular, an element $g \in G$ acts on $V$ by the linear transformation $\rho(g)$. We write $g \cdot v$ or $gv$ to mean $\rho(g)v$. Throughout this paper, we will only consider the setting where $G$ is a linear algebraic group (over the ground field $K$), i.e., $G$ is an (affine) variety, the multiplication and inverse maps are morphism of varieties, and the action is a rational action (or rational representation), i.e., $\rho: G \rightarrow \GL(V)$ is a morphism of algebraic groups.

The $G$-orbit of $v \in V$ is
$$
O_v := \{gv\ |\ g \in G\} \subseteq V,
$$
and we denote by $\overline{O_v}$ the closure of the orbit $O_v$. 

\begin{remark} \label{topology}
To define the closure, we need to define a topology on $V$. In this paper, we will only use the fields $K = \R$ or $\C$ and so we use the standard Euclidean topology on $V$ for orbit closures, unless otherwise specified. This is not standard. In literature, the topology is usually taken as the Zariski topology. Thankfully, in the setting of rational actions of reductive groups, for $K = \C$, the orbit closure with respect to the Euclidean topology agrees with the orbit closure with respect to the Zariski topology. 
%We will need to use the Zariski topology at times, but we will be careful in specifying it each time. 

%We caution the reader that the interplay between the Euclidean and Zariski topology can be a bit tricky at times for $K = \R$.
\end{remark}

%We denote by $K[V]$, the ring of polynomial functions on $V$ (a.k.a. the coordinate ring of $V$). A polynomial function $f \in K[V]$ is called {\em invariant} if $f(gv) = f(v)$ for all $g \in G$ and $v \in V$. In other words, a polynomial is called invariant if it is constant along orbits. The invariant ring is 
%$$
%K[V]^G := \{f \in K[V]\ |\ f(gv) = f(v) \ \forall\ g \in G, v \in V\}.
%$$

%The invariant ring has a natural grading by degree, i.e., $K[V]^G = \oplus_{d=0}^\infty K[V]^G_d$ where $K[V]^G_d$ consists of all invariant polynomials that are homogeneous of degree $d$. For $v \in V$, 

%Note that any invariant polynomial is constant along an orbit and hence constant along an orbit closure because polynomials are continuous (whether we consider Zariski or Euclidean topology). 

For a point $v \in V$, the subgroup $G_v := \{g \in G \ |\ gv = v\}$ is called the stabilizer. We now define stability notions in invariant theory that play a central role in this paper.

\begin{definition}
Let $K = \R$ or $\C$, and let $G$ be an algebraic group (over $K$) with a rational action on a vector space $V$ (over $K$), i.e., $\rho: G \rightarrow \GL(V)$. Let $\Delta$ denote the kernel of the homomorphism $\rho$. Give $V$ the standard Euclidean topology. Then, for $v \in V$, we say $v$ is 
\begin{itemize}
\item {\em unstable} if $0 \in \overline{O_v}$;
\item  {\em semistable} if $0 \notin \overline{O_v}$;
\item  {\em polystable} if $v \neq 0$ and $O_v$ is closed;
\item  {\em stable} if $v$ is polystable and the quotient $G_v/\Delta$ is finite.
\end{itemize}
\end{definition}

%We point out again that our definitions may not be quite standard because we use the Euclidean topology. However, this is the form that is most suited for our purposes. 

Observe that a point is unstable if and only if it is not semistable and also note that stable $\implies$ polystable $\implies$ semistable. 

For any action of $G$ on $V$, there is a natural diagonal action on the direct sum $V^m$ defined by $g\cdot (v_1,\dots,v_m) = (gv_1,\dots,gv_m)$ for all $g \in G$ and $v_i \in V$. Moreover, note that for any group action $\rho: G\rightarrow \GL(V)$, the notions of semistable, polystable and stable are the same whether we consider the action of $G$ or the action of the group $\rho(G)$ on $V$ (by matrix-vector multiplication).

The set of all unstable points is called the {\em null cone}, a central object in the study of computational invariant theory. When $K = \C$, the invariant ring is non-trivial precisely when a generic point is semistable, which is equivalent to the existence of a semistable point, i.e., the null cone is not all of $V$. The null cone membership is the problem of deciding whether a vector is in the null cone, i.e., whether it is unstable. This problem has received enormous amount of attention in recent years, see \cite{BFGOWW} (and references therein) for a comprehensive overiew. Similarly, the problem of deciding whether a given vector is polystable has also been studied due to its applications to degree lower bounds for invariant rings, see \cite{DM-exp, DM}. %\CF{visu get the proper citation here}

%\VM{In this subsection itself, need to say the set of unstable vectors is the null cone (for $K = \C$) and that invariant ring is non-trivial if and only if there exist semistable points if and only if generic point is semistable}

\subsection{MLE for Gaussian group models and invariant theory} \label{sec:gaussianmodel}
A curious and unexpected connection between stability notions defined above and maximum likelihood estimation was discovered in \cite{AKRS}. We now explain this connection in more detail and in the subsequent subsection, we describe explicitly the relation to iPCA.

Suppose $K = \R$ or $\C$. We consider multivariate Gaussian models on $K^n$. In the case of $K = \R$, we take our Gaussian distributions to be centered (i.e. mean zero), so that the distribution is defined by simply giving its precision matrix which is a positive definite matrix. In the case of $K = \C$, we take our Gaussian distributions to be circularly symmetric, so that the distribution is again defined by simply giving its precision matrix which is a positive definite Hermitian matrix. Let $\PD_n$ denote the space of positive definite (Hermitian when $K = \C$) matrices with entries in $K$. By the above discussion, a subset of $\PD_n$ defines a Gaussian model.

Let $\rho: G \rightarrow \GL_n$ be a representation of a group $G$. Then, we define a corresponding {\em Gaussian group model} $\mathcal{M}_G := \{\rho(g)^\dag  \rho(g)\ |\ g \in G\} \subseteq \PD_n$.\footnote{Note that adjoint is the same as transpose for a matrix with real entries.} Informally speaking, the maximum likelihood estimation problem decomposes into two parts. Firstly, estimate the maximum likelihood up to a scalar factor by computing the maximum likelihood estimator in a subgroup of matrices of determinant $1$, denoted $G_{SL}$. After this is done, finding the correct scalar factor becomes a trivial one-dimensional optimization problem with a closed form solution. This approach leads to the following result for Gaussian group models proved in \cite{AKRS} (we state a more general, but equivalent form of their result; see also \cite{derksen2020maximum}). 

\begin{theorem} [\cite{AKRS}] \label{theo:AKRS}
Let $K = \R$ or $\C$, and let $V$ be a finite dimensional Hilbert space, i.e., a vector space with a positive definite inner product (Hermitian when $K = \C$). Let $\rho:G \rightarrow \GL(V)$ be a rational action of $G$ on $V$.  Suppose $\rho(G) \subseteq \GL(V)$ is a Zariski closed subgroup, closed under adjoints and non-zero scalar multiples. Let $G_{\SL} \subseteq G$ be a subgroup such that $\rho(G_{\SL}) = \rho(G) \cap (\SL(V))$ and let $Y \in V^m$ be an $m$-tuple of samples. Then, for the (diagonal) action of $G_{\SL}$ and the model $\mathcal{M}_G$, we have
\begin{itemize}
\item  $Y$ is semistable $\Longleftrightarrow$ The likelihood $l_Y$ is bounded from above; 
\item $Y$ is polystable $\Longleftrightarrow$ an MLE exists;
\item $Y$ is stable $\implies$ there is a unique MLE. If $K = \C$, the converse also holds, i.e., there is a unique MLE $\implies Y$ is stable.
\end{itemize}
\end{theorem}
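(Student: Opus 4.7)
The plan is to rewrite the Gaussian log-likelihood as (essentially) the norm of $Y$ under the diagonal action, so that maximizing the likelihood becomes minimizing an orbit norm on $V^m$, and then to appeal to Kempf--Ness type theorems from geometric invariant theory that link closedness of orbits and attainment of infima of orbit norms to stability. Concretely, for a precision matrix $\Psi = \rho(g)^{\dag}\rho(g)$ with $Y = (y_1,\dots,y_m)$, the log-likelihood equals, up to an additive constant,
\begin{equation*}
\log l_Y(g) \;=\; m \log|\!\det\rho(g)| \;-\; \tfrac{1}{2}\sum_{i}\|\rho(g)y_i\|^2 \;=\; m \log|\!\det\rho(g)| \;-\; \tfrac{1}{2}\|\rho(g)\cdot Y\|^2.
\end{equation*}

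Next, I would use the hypothesis that $\rho(G)$ is closed under nonzero scalar multiples to factor every $g \in G$ as $g = t \cdot h$ with $t > 0$ a positive scalar and $\rho(h) \in \rho(G)\cap\SL(V)$, so that $|\!\det\rho(g)| = t^{\dim V}$. After substitution the likelihood becomes $mn \log t - \tfrac{t^2}{2}\|\rho(h)\cdot Y\|^2 + \text{const}$ with $n = \dim V$. If $\|\rho(h)\cdot Y\|^2 > 0$, maximizing over $t$ is a one-dimensional closed-form problem giving maximal value $-\tfrac{mn}{2}\log\|\rho(h)\cdot Y\|^2 + \text{const}$, while if the infimum over $h$ is $0$ the likelihood is unbounded above. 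This reduces the whole MLE question to studying the orbit norm function $h \mapsto \|\rho(h)\cdot Y\|^2$ on $G_{\SL}$.

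Third, I would invoke Kempf--Ness (in its real version, handled in the appendix) for the action of $\rho(G_{\SL})$ on $V^m$: the infimum of the orbit norm is positive iff $0\notin\overline{G_{\SL}\cdot Y}$, and the infimum is attained iff the orbit is closed, with the minimizers forming a single orbit of the maximal compact subgroup modulo $G_{\SL,Y}$. Combined with the reduction above, this yields the first bullet (semistability $\Leftrightarrow$ boundedness) and the second bullet (polystability $\Leftrightarrow$ existence of an MLE). For the third bullet, the MLE is $\rho(g)^{\dag}\rho(g)$, so two minimizers $h_1,h_2$ produce the same MLE iff $\rho(h_1)\rho(h_2)^{-1}$ is unitary; combining the Kempf--Ness description of the minimizer set with finiteness of $G_{\SL,Y}/\Delta$ (stability) forces the MLE to be unique. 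Over $\C$, if $G_{\SL,Y}/\Delta$ is positive-dimensional, a non-compact one-parameter subgroup of the stabilizer produces distinct minimizers with distinct $\rho(g)^{\dag}\rho(g)$, giving the converse.

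The main obstacle is precisely the transition between algebro-geometric stability notions (naturally set up over algebraically closed fields) and the real analytic problem of minimizing the orbit norm. Over $\C$ this is the classical Kempf--Ness theorem, but over $\R$ one needs $\rho(G_{\SL})$ to be a real reductive subgroup of $\SL(V)$, which is exactly what the assumptions that $\rho(G)$ is Zariski closed and closed under adjoints and scalars are engineered to supply. Making this rigorous, in particular proving a real Kempf--Ness statement and handling the subtlety that some $\rho(g)^{\dag}\rho(g)$ may coincide for different $g$, is the heart of the argument and is the reason the converse in the stability bullet must be restricted to $K = \C$; as the authors note, these technicalities are relegated to the appendix.
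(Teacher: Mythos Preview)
The paper does not give its own proof of this theorem; it is stated with attribution to \cite{AKRS} and used as a black box (see also the remark that a more general but equivalent form appears in \cite{derksen2020maximum}). Your sketch is essentially the argument of \cite{AKRS}: rewrite the log-likelihood as $m\log|\det\rho(g)|-\tfrac12\|\rho(g)\cdot Y\|^2$, split off the scalar direction to reduce to minimizing the orbit norm under $G_{\SL}$, and then invoke Kempf--Ness (and its real analogue due to Richardson--Slodowy) to translate boundedness/attainment/uniqueness of the norm minimum into semistability/polystability/stability.

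One small correction: your final sentence suggests that ``these technicalities are relegated to the appendix'' of the present paper. They are not. The appendix here (Appendix~\ref{app: algebraic groups over R}) addresses a different issue, namely transferring \emph{generic} stability statements for the specific star-quiver action from $\C$ to $\R$ in order to prove Theorem~\ref{thm:realipca-to-sigma}. The real Kempf--Ness input needed for Theorem~\ref{theo:AKRS} itself is handled in \cite{AKRS}, not here. Also, be slightly careful with the scalar factorization: what one actually uses is that $\rho(G)$ contains all nonzero scalars, so every $\Psi\in\mathcal{M}_G$ can be written as $t^2\rho(h)^{\dag}\rho(h)$ with $t>0$ and $\rho(h)\in\rho(G_{\SL})$; one does not literally need $g=t\cdot h$ in $G$.
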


The fact that the converse of the last part does not hold for $K = \R$ is a bit disappointing. However, a partial result in that regard which is very useful is the following:

\begin{lemma} \label{lem:rect-AKRS}
Keep the notation of Theorem~\ref{theo:AKRS}. Let $K = \R$. If there is a unique MLE given $Y$, then the image of the stabilizer $(\rho(G_{\SL}))_Y = \rho((G_{\SL})_Y)$ is compact.
\end{lemma}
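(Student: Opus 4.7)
The plan is to reformulate the MLE uniqueness condition as a norm-minimization problem on the orbit of $Y$, and then exploit the real Kempf-Ness theorem to constrain the stabilizer. Set $H := \rho(G_{\SL})$. Using that $\rho(G)$ is closed under nonzero scalars, every $\psi \in \mathcal{M}_G$ can be written as $\psi = c \, h^\dag h$ with $c > 0$ and $h \in H$. A direct Gaussian calculation shows that profiling out $c$ reduces the MLE problem to minimizing the continuous function $h \mapsto \|hY\|^2$ over $H$, and uniqueness of $\hat\psi$ becomes the statement that $h^\dag h$ is constant on the minimizing set $M \subseteq H$.

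The next step is to describe $M$ explicitly. Existence of the MLE together with \cref{theo:AKRS} gives that $Y$ is polystable, so the orbit $H \cdot Y$ is closed in $V^m$. The real Kempf-Ness theorem, applicable because $H$ is Zariski closed and closed under adjoints, identifies the $\|\cdot\|$-minimizers in $H \cdot Y$ as a single orbit of the maximal compact subgroup $K_H := H \cap O(V)$. Fixing any one minimizer $h_0 Y$, the set of $h \in H$ attaining the minimum is therefore $M = K_H \cdot h_0 \cdot H_Y$, where $H_Y$ denotes the stabilizer of $Y$ in $H$.

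For any $h = k h_0 s \in M$, orthogonality of $k$ gives $h^\dag h = s^\dag h_0^\dag h_0 s = s^\dag P s$ with $P := h_0^\dag h_0$ positive definite. Uniqueness of the MLE thus forces $s^\dag P s = P$ for every $s \in H_Y$, equivalently $P^{1/2} H_Y P^{-1/2} \subseteq O(V)$. Since $H_Y$ is closed in $\GL(V)$ as the stabilizer of a point under a continuous action by the closed subgroup $H$, its conjugate $P^{1/2} H_Y P^{-1/2}$ is a closed subset of the compact group $O(V)$, hence compact; conjugating back yields that $H_Y = (\rho(G_{\SL}))_Y$ is compact.

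The delicate point I expect to have to address carefully is invoking the correct form of Kempf-Ness over $\R$: namely, that for a Zariski-closed, self-adjoint subgroup $H \leq \GL(V)$ acting on $V^m$, the minima of the norm on a closed $H$-orbit constitute a single $K_H$-orbit. Once this is in hand, the remainder of the argument is essentially linear algebra manipulations with self-adjoint subgroups of $\GL(V)$, and should not present further difficulties.
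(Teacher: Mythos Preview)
Your proposal is correct and follows essentially the same approach as the paper: both reduce uniqueness of the MLE to the stabilizer at a minimal-norm point lying inside $O(V)$ via Kempf--Ness, then conclude compactness and conjugate back to $Y$. The only difference is that the paper cites \cite{AKRS} as a black box for the bijection between MLEs and $\rho((G_{\SL})_{g\cdot Y})/(\rho((G_{\SL})_{g\cdot Y})\cap O(V))$, whereas you unpack this explicitly through the decomposition $M = K_H h_0 H_Y$ and the computation $h^\dag h = s^\dag P s$; your conjugation by $P^{1/2}$ and the paper's conjugation by $h_0$ agree up to the orthogonal factor in the polar decomposition of $h_0$.
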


\begin{proof}
Suppose there is a unique MLE given $Y$. Suppose $g \cdot Y$ is a point of minimal norm in the $G_{\SL}$-orbit of $Y$. Then, the MLEs are in bijection with $\rho((G_{\SL})_{g \cdot Y})/\rho((G_{\SL})_{g \cdot Y}) \cap O(V)$, where $O(V)$ is the subgroup of $V$ that preserves the inner product, see \cite{AKRS}. In particular, we have a unique MLE if and only if $\rho((G_{\SL})_{g \cdot Y}) \subseteq O(V)$. Since $\rho((G_{\SL})_{g \cdot Y}) = (\rho(G_{\SL}))_{g \cdot Y}$ is a closed subgroup of $O(V)$ (indeed stabilizers are closed subgroups), we see that it is compact because $O(V)$ is compact. As $\rho(G_{\SL})_{g \cdot Y}$ is conjugate to $\rho(G_{\SL})_Y$, we get that the latter is compact as well.
\end{proof}

%\VM{We need to say $\rho(G_{\SL})_Y$ is compact because if $\rho$ has a non-compact kernel, then the stabilizer will never be compact.}

%\begin{remark}
%In the above result, it suffices to ask for $\rho(G_{\SL})$ and $\rho(G) \cap (\SL(V))$ to have the same identity component since the stability notions only depend in the identity component of the group acting. This mild generalization will be important for us.
%\end{remark}

\subsection{iPCA as a Gaussian group model} \label{sec:ipca-gaussianmodel}
In this subsection, we place iPCA in the framework of Gaussian group models so that we can study existence and uniqueness of iPCA by studying the corresponding invariant theory. Later on, we will see that the invariant theory that is related to iPCA is the invariant theory of quivers, in particular the star quiver, see Section~\ref{sec:ipca-star}.

Let $K = \R$ or $\C$. Let $\alpha = (p,q_1,\dots,q_k)$ and recall that $n = \sum_i q_i$. Consider the group $G = \GL(\alpha) = \GL_p \times \prod_{i=1}^k \GL_{q_i}$. Consider the action $\rho = \rho_\alpha$ of $G$ on $\oplus_{i=1}^k \mat_{p,q_i}$ given by
\begin{equation} \label{eq:ipca-gp-action}
(g,h_1,\dots,h_k) \cdot (B_1,\dots,B_k) = (g B_1 h_1^{-1}, gB_2 h_2^{-1}, \dots, gB_k h_k^{-1}).
\end{equation}

Identify $\oplus_{i=1}^k \mat_{p,q_i}$ with $K^{pn}$ as follows. For matrices $(B_1,\dots,B_k) \in \oplus_{i=1}^k \mat_{p,q_i}$, place them (horizontally) next to each other to get a matrix of size $p \times n$. Then, vectorize this matrix, i.e., stack its columns. With this identification, the representation $\rho: G \rightarrow \GL_{pn}$ given by 
$$
\rho(g,h_1,\dots,h_k) = g \otimes {\rm diag}((h_1^{-1})^\top,\dots,(h_k^{-1})^\top),
$$

where $\otimes$ denotes the Kronecker (or tensor) product of matrices and ${\rm diag}((h_1^{-1})^\top,\dots,(h_k^{-1})^\top)$ denotes a block diagonal matrix whose diagonal blocks are $(h_1^{-1})^\top,\dots, (h_k^{-1})^\top$.

For a choice of $\sigma = (\sigma_0,\sigma_1,\dots,\sigma_k) \in \Z^{k+1}$, we define the subgroup 
$$
\GL(\alpha)_\sigma := \{(g,h_1,\dots,h_k) \in \GL(\alpha)\ |\ \det(g)^{\sigma_0} \cdot \prod_{i=1}^k \det(h_i)^{\sigma_i} = 1\}.
$$

\begin{lemma} \label{lem:GSL-Gsigma}
Let $\sigma = (-n, p, p,\dots, p)$. Then $\rho_\alpha(G) \cap \SL_{pq} = \rho_\alpha(\GL(\alpha)_\sigma)$.
\end{lemma}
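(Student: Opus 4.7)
The proof will be a direct determinant calculation combined with a one-line set-theoretic observation, so the plan is more about writing down the right formula cleanly than overcoming an obstacle.

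First I would compute $\det \rho_\alpha(g,h_1,\dots,h_k)$ explicitly. Writing $D = {\rm diag}((h_1^{-1})^\top,\dots,(h_k^{-1})^\top)$, which is an $n\times n$ block diagonal matrix, the representation is $\rho_\alpha(g,h_1,\dots,h_k) = g \otimes D$, where $g$ is $p\times p$. Using the standard identity $\det(A\otimes B) = \det(A)^b \det(B)^a$ for an $a\times a$ matrix $A$ and a $b\times b$ matrix $B$, together with $\det D = \prod_i \det(h_i)^{-1}$, I get
\[
\det \rho_\alpha(g,h_1,\dots,h_k) \;=\; \det(g)^n \prod_{i=1}^k \det(h_i)^{-p}.
\]

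Second I would compare this to the defining condition of $\GL(\alpha)_\sigma$ for $\sigma = (-n, p,\dots, p)$, which reads $\det(g)^{-n} \prod_i \det(h_i)^{p} = 1$. This condition is equivalent to $\det(g)^n \prod_i \det(h_i)^{-p} = 1$, i.e.\ to $\rho_\alpha(g,h_1,\dots,h_k) \in \SL_{pn}$. Thus a tuple $(g,h_1,\dots,h_k) \in \GL(\alpha)$ lies in $\GL(\alpha)_\sigma$ if and only if its image under $\rho_\alpha$ has determinant $1$.

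Finally I would deduce both inclusions from this equivalence. The forward inclusion $\rho_\alpha(\GL(\alpha)_\sigma) \subseteq \rho_\alpha(\GL(\alpha)) \cap \SL_{pn}$ is immediate. For the reverse, given any $M = \rho_\alpha(g,h_1,\dots,h_k) \in \rho_\alpha(\GL(\alpha)) \cap \SL_{pn}$, the determinant condition forces $(g,h_1,\dots,h_k) \in \GL(\alpha)_\sigma$, so $M$ itself already lies in $\rho_\alpha(\GL(\alpha)_\sigma)$ — no modification of the preimage is needed. There is no genuine obstacle here; the only thing to be careful about is bookkeeping the exponents in the Kronecker determinant formula (the role of $p$ versus $n$) and noting that replacing $\sigma$ by $-\sigma$ would give the same subgroup, which is why the signs in $\sigma = (-n, p, \dots, p)$ match the determinant of $\rho_\alpha$ up to overall inversion.
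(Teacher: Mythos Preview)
Your proof is correct and follows exactly the same approach as the paper, which simply states the determinant formula $\det(\rho_\alpha(g,h_1,\dots,h_k)) = \det(g)^{n} \prod_{i=1}^k \det(h_i)^{-p}$ and declares the result clear. Your version is just a more detailed write-up of the same one-line computation.
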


\begin{proof}
This is clear since $\det(\rho_\alpha(g)) = \det(g)^{n} \cdot \prod_{i=1}^k \det(h_i)^{-p}$.
\end{proof}

%\begin{lemma}
%Let $d = {\rm gcd}(p,n)$ and let $p' = p/d$ and $n' = n/d$. Let $\mu = (-n',p',p',\dots,p')$. Then $\rho(G) \cap \SL_{pq}$ and $\GL(\alpha)_\mu$ have the same identity component.
%\end{lemma}

%\begin{proof}
%Let $\sigma = (-n, p, p,\dots, p)$. Then $\GL(\alpha)_\sigma \supseteq \GL(\alpha)_\mu$. Moreover, $\GL(\alpha)_\mu$ is clearly the kernel of the map $\pi: \GL(\alpha)_\sigma \rightarrow K$ given by $(g,h_1,\dots,h_k) \mapsto \det(g)^{-n'} \cdot \prod_{i=1}^k \det(h_i)^{p'}$. Since $\pi^d (g) = 1$ for all $g \in \GL(\alpha)_\sigma$, the image $\pi(\GL(\alpha))_\sigma$ is contained in the discrete subset consisting of $d^{th}$ roots of unity. Thus, $\GL(\alpha)_\mu$ is a subgroup of finite index in $\GL(\alpha)_\sigma$ and hence has the same identity component as $\GL(\alpha)_\sigma$.
%\end{proof}

\begin{proposition} \label{prop:ipca-to-gaussianmodel}
Let $K = \R$ or $\C$. Let $\alpha = (p,q_1,\dots,q_k)$, $G = \GL(\alpha)$ and consider the action $\rho_\alpha$ of $G$ on $\Rep(\Delta[k], \alpha)) = \oplus_{i=1}^k \mat_{p,q_i}$. Let $\mathcal{M}_\alpha$ denote the corresponding Gaussian group model. Let $\sigma = (-n, p, p ,\dots, p)$. Then, for $B \in \Rep(\Delta[k], \alpha))$, we have
\begin{itemize}
\item  $B$ is $\GL(\alpha)_\sigma$-semistable $\Longleftrightarrow l_B$ is bounded from above; 
\item $B$ is $\GL(\alpha)_\sigma$-polystable $\Longleftrightarrow$ an MLE exists;
\item $B$ is $\GL(\alpha)_\sigma$-stable $\implies$ there is a unique MLE. Further, if $K = \C$, the converse also holds, i.e., there is a unique MLE $\implies B$ is $\GL(\alpha)_\sigma$-stable. Moreover, when $K = \R$, there is a unique MLE $\implies$ the stabilizer $(\rho_\alpha(\GL(\alpha)_\sigma))_B$ is compact.
\end{itemize}
\end{proposition}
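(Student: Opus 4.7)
The plan is to derive Proposition~\ref{prop:ipca-to-gaussianmodel} as a direct application of Theorem~\ref{theo:AKRS} together with Lemma~\ref{lem:rect-AKRS}, using Lemma~\ref{lem:GSL-Gsigma} to identify $\GL(\alpha)_\sigma$ with the ``SL'' subgroup $G_{\SL}$ appearing in the AKRS framework. The two things I need to check are that the iPCA objective from Equation~\eqref{eq:ipca-def} coincides with the Gaussian group model likelihood $l_B$ attached to the action $\rho_\alpha$, and that $\rho_\alpha(G) \subseteq \GL_{pn}$ satisfies the three standing hypotheses of Theorem~\ref{theo:AKRS} (Zariski closed, adjoint-closed, closed under nonzero scalar multiples).

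To identify the two likelihoods, I compute the Gaussian group model $\mathcal{M}_\alpha = \{\rho_\alpha(g, h_1, \dots, h_k)^\dag \rho_\alpha(g, h_1, \dots, h_k)\}$. Using the Kronecker identities $(A \otimes B)^\dag = A^\dag \otimes B^\dag$ and $(A \otimes B)(C \otimes D) = AC \otimes BD$ together with the explicit block-diagonal form of $\rho_\alpha$, this set turns out to consist of precisely the block-diagonal precision matrices on $K^{pn}$ whose $i$-th block has the form $\Theta \otimes \Theta_i$, where $\Theta$ is a common positive definite $p \times p$ matrix and each $\Theta_i$ is an arbitrary positive definite $q_i \times q_i$ matrix. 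Evaluating the Gaussian log-likelihood of such a precision matrix on the vectorized data $(B_1, \dots, B_k)$, using $\log\det(\Theta \otimes \Theta_i) = q_i \log\det \Theta + p \log\det \Theta_i$ and $\operatorname{vec}(B_i)^\dag (\Theta \otimes \Theta_i) \operatorname{vec}(B_i) = \tr(\Theta B_i \Theta_i B_i^\dag)$, reproduces the iPCA objective $f(\Theta, \Theta_1, \dots, \Theta_k)$ of Equation~\eqref{eq:ipca-def} up to a sign and an additive constant. Consequently $l_B$ is bounded above exactly when the iPCA objective is, and their maximizers are in bijection.

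For the AKRS hypotheses, closure under nonzero scalars is immediate: $c \rho_\alpha(g, h_1, \dots, h_k) = \rho_\alpha(cg, h_1, \dots, h_k)$. Closure under adjoints is the identity $\rho_\alpha(g, h_1, \dots, h_k)^\dag = \rho_\alpha(g^\dag, h_1^\dag, \dots, h_k^\dag)$, which follows from $(A \otimes B)^\dag = A^\dag \otimes B^\dag$ after tracking the inverse-transpose wrapped around each $h_i$ inside the block-diagonal factor. Zariski closedness of $\rho_\alpha(G)$ is the general fact that the image of a morphism of linear algebraic groups is always closed; alternatively one may describe $\rho_\alpha(G)$ directly as the locus of Kronecker products of an invertible $p \times p$ matrix with an invertible block-diagonal matrix of the prescribed block sizes, which is cut out by polynomial equations inside $\GL_{pn}$.

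Combined with Lemma~\ref{lem:GSL-Gsigma}, which supplies $\rho_\alpha(\GL(\alpha)_\sigma) = \rho_\alpha(G) \cap \SL_{pn}$, these verifications allow me to invoke Theorem~\ref{theo:AKRS} with $G_{\SL} := \GL(\alpha)_\sigma$, yielding the three equivalences in the statement. The additional assertion that over $\R$ a unique MLE forces $(\rho_\alpha(\GL(\alpha)_\sigma))_B$ to be compact is precisely the conclusion of Lemma~\ref{lem:rect-AKRS}. The main obstacle is just bookkeeping: making the Kronecker and vectorization identities precise so that the match between $l_B$ and $f$ is truly an equality (not merely a similarity), and checking the transpose conventions embedded in $\rho_\alpha$ when verifying adjoint-closedness. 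All the substantive work is encapsulated in Theorem~\ref{theo:AKRS} and Lemma~\ref{lem:rect-AKRS}.
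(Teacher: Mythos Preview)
Your proposal is correct and follows the same approach as the paper: invoke Theorem~\ref{theo:AKRS} and Lemma~\ref{lem:rect-AKRS} after using Lemma~\ref{lem:GSL-Gsigma} to identify $\GL(\alpha)_\sigma$ with $G_{\SL}$. The paper's proof is a single sentence deferring the hypothesis checks and the likelihood identification to the surrounding discussion, whereas you spell these out explicitly; the substance is identical.
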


The notation $\Rep(\Delta[k], \alpha))$ is certainly uninitiated but will become clear when we discuss quivers. But we use this notation to maintain consistency in the paper.

\begin{proof}
This follows from Theorem~\ref{theo:AKRS} and Lemma~\ref{lem:rect-AKRS} since we can take $\GL(\alpha)_\sigma$ as $G_{\SL}$ by Lemma~\ref{lem:GSL-Gsigma}.
\end{proof}

Now, a simple computation shows that the model $\mathcal{M}_\alpha$ is given by
\begin{equation} \label{eq:ipca-model}
\mathcal{M}_\alpha = \{\Theta \otimes {\rm diag}(\Theta_1,\dots,\Theta_k)\ |\ \Theta \in \PD_p, \Theta_i \in \PD_{q_i}\}.
\end{equation}
Allen and Tang \cite{tang2018integrated} describe iPCA as the maximum likelihood estimation for the  Gaussian model $\mathcal{M}_\alpha$. Moreover, in \cite{tang2018integrated}, the  log-likelihood function of $\mathcal{M}_\alpha$ is computed.

%\VM{@CF: Why did you remove the proof of this and cite Allen-Tang instead? In my mind, the point was that the log-likelihood of the Gaussian group model is the same as the function whose optima give iPCA precision matrices, and therefore the next corollary}
%\CF{I think Allen-Tang describe iPCA as a Gaussian group model too, without mentioning that language.}

Let $K = \R$. Let $\alpha = (p,q_1,\dots,q_k)$ and let $n = \sum_i q_i$. Let $\GL(\alpha), \rho_\alpha, \Rep(\Delta[k], \alpha))$ and $\mathcal{M}_\alpha$ be as above. Then, for $B = (B_1,\dots,B_k) \in \Rep(\Delta[k], \alpha))$, the log-likelihood function (see [Eq. 12, \cite{tang2018integrated}] is
$$
l_B (\Theta \otimes {\rm diag}(\Theta_1,\dots,\Theta_k)) =  n \log|\Theta| + p \sum_{i = 1}^k \log |\Theta_i| - \sum_{i=1}^k \tr (\Theta B_i \Theta_i B_i^\dag)
$$
There is a formula for the log-likelihood function of a Gaussian group model is given in \cite{AKRS}. The interested reader can verify that specializing that to the Gaussian group model $\mathcal{M}_\alpha$ you recover the same formula.

%\begin{proof}
%The log-likelihood of a Gaussian on a sample $v \in V_\alpha$ with precision matrix $\tilde{\Theta}:V_\alpha \to V_\alpha$ is given by 
%$$ \frac{1}{D}\sum_{i = 1}^n v^\dagger \Theta v -  \frac{1}{D}\log\det\Theta.$$
%The formula for the log-likelihood function of a Gaussian group model is given in \cite{AKRS}: 
%$$ log like$$
% Specializing to our specific situation, one easily recovers the above formula.
%\VM{Either provide a more precise reference or give a more detailed proof}
%\end{proof}

For sample data $B = (B_1,\dots,B_k) \in \Rep(\Delta[k], \alpha))$,  the iPCA precisions are equal to maximum likelihood estimators for $B$ with respect to the model $\mathcal{M}_\alpha$. More precisely, the function Eq~\ref{eq:ipca-def} of which the iPCA precisions are the optima is the log-likelihood function of $\mathcal{M}_\alpha$ and hence the iPCA precision matrices correspond to MLEs. Hence, existence and uniqueness of iPCA is the same as existence and uniqueness of MLE for the model $\mathcal{M}_\alpha$.

Summarizing the above discussion, we get:

\begin{corollary} \label{cor:ipca-Malpha}
Let $K = \R$. Let $\alpha = (p,q_1,\dots,q_k)$. Let $\GL(\alpha), \rho_\alpha, \Rep(\Delta[k], \alpha))$ and $\mathcal{M}_\alpha$ be as above. Then, for sample data in $\Rep(\Delta[k], \alpha))$:
\begin{itemize}
\item Eq~\ref{eq:ipca-def} is the log-likelihood function for $\mathcal{M}_\alpha$ is generically bounded above. 
\item iPCA precision matrices are precisely the MLE w.r.t to $\mathcal{M}_\alpha$.
\end{itemize}
\end{corollary}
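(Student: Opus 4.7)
The plan is to treat this corollary as a verification that the maximum likelihood setup of \cite{tang2018integrated} matches the Gaussian group model formalism attached to the representation $\rho_\alpha$. Both bullets follow once we explicitly identify the model $\mathcal{M}_\alpha$ and compute its log-likelihood. There is nothing deep here; the work is all in checking that the conventions (column-stacking vectorization, Kronecker product ordering, and the absence of a factor of $\tfrac12$) match Eq~\ref{eq:ipca-def}.

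First I would spell out $\mathcal{M}_\alpha = \{\rho_\alpha(g,h_1,\dots,h_k)^\dag \rho_\alpha(g,h_1,\dots,h_k)\}$. Under column-stacking, $\rho_\alpha(g,h_1,\dots,h_k)$ is block-diagonal with $i$-th block $(h_i^{-1})^\top \otimes g$. Using $(A\otimes B)^\dag(A\otimes B) = (A^\dag A)\otimes(B^\dag B)$ block-by-block, and setting $\Theta = g^\dag g$ and $\Theta_i = (h_i h_i^\dag)^{-1}$, each block equals $\Theta_i \otimes \Theta$. As $g$ ranges over $\GL_p$ and each $h_i$ over $\GL_{q_i}$, the matrices $\Theta,\Theta_i$ range over all positive definite matrices. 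Up to a benign reordering of the tensor factors coming from the choice of vectorization convention, this recovers Eq~\ref{eq:ipca-model} for $\mathcal{M}_\alpha$.

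Next I would compute the log-likelihood. For centered Gaussian sample data $B = (B_1,\dots,B_k)$ with precision matrix $\Psi = \Theta \otimes \mathrm{diag}(\Theta_1,\dots,\Theta_k)$, the log-density contributes $\tfrac12\log|\Psi| - \tfrac12 B^\top \Psi B$ up to an additive constant. The Kronecker identity $|A\otimes B| = |A|^{\dim B}|B|^{\dim A}$ applied blockwise gives $\log|\Psi| = n\log|\Theta| + p\sum_i \log|\Theta_i|$. The quadratic term expands, using $\mathrm{vec}(X)^\top(A\otimes B)\mathrm{vec}(X) = \tr(B^\top X A X^\top)$ on each block, to $\sum_i \tr(\Theta B_i \Theta_i B_i^\dag)$. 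Multiplying by the standard normalization used in \cite{tang2018integrated} (and in \cite{AKRS}) produces exactly the function on the right-hand side of Eq~\ref{eq:ipca-def}; this gives the first bullet. The second bullet is then definitional: iPCA precisions are the argmax of Eq~\ref{eq:ipca-def}, which by the first bullet is the log-likelihood $l_B$, so iPCA precisions are exactly the MLEs for $\mathcal{M}_\alpha$.

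There is no real obstacle; the only place for error is bookkeeping around vectorization conventions and the Kronecker factor order, which must be handled consistently between the definition of $\rho_\alpha$ in Eq~\ref{eq:ipca-gp-action} and the form of $\mathcal{M}_\alpha$ in Eq~\ref{eq:ipca-model}. Since both the model and the log-likelihood in Eq~\ref{eq:ipca-def} are invariant under swapping tensor factors jointly (i.e., $\Theta \otimes \Theta_i$ versus $\Theta_i \otimes \Theta$), this cosmetic ambiguity does not affect the conclusion. In fact, once the model is identified, one may simply invoke the general formula for the log-likelihood of a Gaussian group model from \cite{AKRS} and specialize to $\mathcal{M}_\alpha$ as a one-line check.
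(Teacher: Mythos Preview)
Your proposal is correct and follows essentially the same route as the paper: the corollary in the paper is just a summary of the preceding discussion, which identifies $\mathcal{M}_\alpha$ as in Eq.~\eqref{eq:ipca-model} and then observes (citing \cite{tang2018integrated} and \cite{AKRS}) that the log-likelihood of $\mathcal{M}_\alpha$ is exactly the function in Eq.~\eqref{eq:ipca-def}, so that the iPCA precisions are by definition the MLEs. Your write-up is in fact more explicit than the paper's, since you carry out the Kronecker determinant and trace computations rather than merely pointing to \cite[Eq.~12]{tang2018integrated}.
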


\begin{remark}
If we take $K = \C$, then maximum likelihood estimation for the complex model $\mathcal{M}_\alpha$ can be seen as a complex version of iPCA. Our results on the real model (on existence and uniqueness of MLE) also hold for this complex model. Indeed, our proof strategy is to find the answers for the complex model and then transfer the results to the real model.
\end{remark}

\section{Quiver representations and stability}\label{sec:quiv}
From a purely algebraic standpoint, the theory of quiver representations was built to be a natural and rich generalization of linear algebra. Quiver representations have numerous applications to various areas in algebra and geometry, e.g., cluster algebras \cite{derksen2010quivers, Keller}, Schubert calculus \cite{DSW, Ressayre,DW-LR}, non-commutative algebraic geometry \cite{Reineke} and symplectic resolutions \cite{Ginzburg} to name a few. In the last decade, connections between quivers and computational complexity were discovered \cite{HW14} and this led to progress on non-commutative rational identity testing and more, see \cite{GGOW16, DM, IQS2, DM-arbchar, DM-si, GCTV, GGOW-BL}. With regard to statistics, exact sample size thresholds for matrix normal models were obtained in \cite{derksen2020maximum} by utilizing a connection to the invariant theory of Kronecker quivers (via Theorem~\ref{theo:AKRS}) that was first explained in \cite{AKRS}.

In this section, we will discuss the basic notions of quiver representations and stability of quiver representations. Then, we will explain in detail the connection between iPCA and star quivers, which will be used in the subsequent sections to prove our main results on existence and uniqueness of iPCA.

Let $K$ denote the ground field, we will only consider $K = \R$ or $\C$ in this paper. A quiver $Q$ is a directed acyclic graph\footnote{In literature, quivers are not always taken to be directed acyclic. But for our purposes, we will make this a standing assumption throughout not only because this is the case that is relevant to us, but also because the theory is better understood in this case.}, i.e. a set of vertices denoted $Q_0$ and a set of arrows $Q_1$. For each arrow $a \in Q_1$, we denote by $ta$ and $ha$, the tail vertex and head vertex of the arrow. We will demonstrate all the basic notions and definitions in the crucial example (below) of the star quiver $\Delta[k]$ with vertices $x$ and $y_1,\dots,y_k$ with $k$ arrows $a_1,\dots,a_k$ such that the arrow $a_i$ goes from $y_i$ to $x$ .

\begin{equation}\begin{tikzcd}
& x & &  \\
y_1 \arrow[ur, ->, "a_1"] &  y_2\arrow[u, ->,"a_2"] &  \dots & y_n \arrow[ull, ->,"a_k" above]
%\& \mu(X)_{2} \& \&
\end{tikzcd}\label{eq:star-quiver}\end{equation}

A representation $V$ of $Q$ is simply an assignment of a finite-dimensional vector space $V(x)$ for each $x \in Q_0$ and a linear transformation $V(a): V(ta) \rightarrow V(ha)$ for each arrow $a \in Q_1$.  A morphism of quiver representations $\phi: V \rightarrow W$ is a collection of linear maps $\phi(x): V(x) \rightarrow W(x)$ for each $x \in Q_0$ subject to the condition that for every $a \in Q_1$, the diagram below commutes.

\begin{center}
\begin{tikzcd}
V(ta) \arrow[r, "V(a)"] \arrow[d,, "\phi(ta)"]
& V(ha) \arrow[d, "\phi_{ha}"] \\
W(ta) \arrow[r, "W(a)"]
& W(ha)
\end{tikzcd}
\end{center}

A representation $V$ of $\Delta[k]$ is given by assigning vector spaces $V(x)$ to $x$ and and $V(y_i)$ to $y_i$ for each $i$, and $n$ linear maps $V(a_1),\dots,V(a_k)$ where $V(a_i): V(y_i)\rightarrow V(x)$. A morphism between two representations $V$ and $W$ of $\Delta[k]$ is given by linear maps $\phi(x): V(x) \rightarrow W(x)$ and $\phi(y_i): V(y_i) \rightarrow W(y_i)$ for all $i$ such that $\phi(x) \circ V(a_i) = W(a_i) \circ \phi(y_i)$ for all $i$.

A subrepresentation $U$ of $V$ is a collection of subspaces $U(x) \subseteq V(x)$ such that for every arrow $a \in Q_1$ the linear map $U(a)$ is simply a restriction of $V(a)$. In particular, this means that the image of $U(ta)$ under $V(a)$ will need to be contained in $U(ha)$. For two representations $V$ and $W$, we define their direct sum $V \oplus W$ to be the representation that assigns $V(x) \oplus W(x)$ to each vertex $x$ and the linear map $\begin{pmatrix} V(a) & 0 \\ 0 & W(a) \end{pmatrix}$ for each arrow $a \in Q_1$. Similarly, the notion of direct summand, image, kernel, co-image, etc. are all defined in a straightforward way. We say $V$ is indecomposable if it cannot be written as a direct sum of two (proper) subrepresentations. Otherwise, $V$ is called decomposable. In summary, the category of quiver representations forms an Abelian category. We refer to \cite{DW-book} for complete details.

The dimension vector of a representation $V$ is $\underline{\dim}(V) = (\dim V(x))_{x \in Q_0}$. So, for a representation $V$ of $\Delta[k]$, its dimension vector is 
$$\underline{\dim}(V) = (\dim(V(x)), \dim(V(y_1)), \dim(V(y_2)), \dots, \dim(V(y_k))).$$ We will keep the convention throughout this paper that when we write a dimension vector of $\Delta[k]$, we will first specify the dimension at $x$ and then the dimensions at $y_1,y_2,\dots,y_k$ in order.

For any representation $V$ of a quiver $Q$, for each $x \in Q_0$, picking a basis for $V(x)$ identifies $V(x)$ with $K^{\dim(V(x))}$. Further, with this identification, every linear map $V(a)$ is just a matrix of size $\dim(V(ha)) \times \dim(V(ta))$. Thus, we come to the following definition. For any dimension vector $\alpha = (\alpha(x))_{x \in Q_0} \in \N^{Q_0}$ (where $\N = \{0,1,2,\dots,\}$), we define the representation space
$$
\Rep(Q,\alpha) = \bigoplus_{a \in Q_1} \Mat_{\alpha(ha),\alpha(ta)}.
$$

Any point $V= (V(a))_{a \in Q_1} \in \Rep(Q,\alpha)$ can be interpreted as a representation of $Q$ with dimension vector $\alpha$ as follows: for each $x \in Q_0$, assign the vector space $K^{\alpha(x)}$, and for each arrow $a \in Q_1$, the matrix $V(a)$ describes a linear transformation from $K^{\alpha(ta)}$ to $K^{\alpha(ha)}$. The base change group $\GL(\alpha) = \prod_{x \in Q_0} \GL_{\alpha(x)}$ acts on $\Rep(Q,\alpha)$ in a natural fashion where $\GL_{\alpha(x)}$ acts on the vector space $K^{\alpha(x)}$ assigned to vertex $x$ by changing basis. More concretely, for $g = (g_x)_{x \in Q_0} \in \GL(\alpha)$ and $V = (V(a))_{a \in Q_1} \in \Rep(Q,\alpha)$, the point $g \cdot V \in \Rep(Q,\alpha)$ is defined by the formula
$$
(g \cdot V) (a) = g_{ha} V(a) g_{ta}^{-1}.
$$

The $\GL(\alpha)$ orbits in $\Rep(Q,\alpha)$ are in $1-1$ correspondence with isomorphism classes of $\alpha$-dimensional representations.

For the star quiver $\Delta[k]$, suppose we pick a dimension vector $\alpha = (p,q_1,\dots,q_k)$, then the representation space 
$$
\Rep(\Delta[k], \alpha)) = \bigoplus_{i=1}^n \Mat_{p,q_i}.
$$
%\CF{should we unify the notation $V_\alpha = \Rep(\Delta[k], \alpha))$? Also, a little confusing that $V$ means vector space and point in rep.}
%\VM{Okay, now I understand the issue you were referring to}
Now, $\GL(\alpha) = \GL_p \times \GL_{q_1} \times \GL_{q_2} \times \dots \times \GL(q_n)$, and the action is given by the formula
$$
(g_x, g_{y_1},\dots,g_{y_n}) \cdot (B_1,\dots,B_n) = (g_x B_1g_{y_1}^{-1},\dots, g_x B_n g_{y_n}^{-1}),
$$
which when compared with Eq~\ref{eq:ipca-gp-action} suggests the connection between iPCA and the star quiver.

%The orbits of this action correspond to isomorphism classes of $(p,q)$-dimensional representations of $\Theta(m)$. The subgroup $\SL(\alpha) = \SL_p \times \SL_q$. First, observe that $Y = (Y_1,\dots,Y_m)$ is semistable/polystable/stable (for the action of $\SL_p \times \SL_q$) if and only if $\lambda Y = (\lambda Y_1,\dots, \lambda Y_m)$ is semistable/polystable/stable for $\lambda \in K^\times$. This is a simple consequence of the fact that the action is by linear transformations. Thus, we see that whether $Y = (Y_1,\dots,Y_m)$ is semistable, polystable, or stable (for the action of $\SL(\alpha)$) only depends on the isomorphism class of the quiver representation it defines (i.e., the $\GL(\alpha)$-orbit). This is the starting point of understanding the various stability notions from a representation theoretic perspective, which we will discuss in more detail in the next section. 

\begin{remark}
The space $\Rep(Q,\alpha)$ is a representation of $\GL(\alpha)$ and its various subgroups. At the same time, we refer to a point $V \in \Rep(Q,\alpha)$ also as a representation. We advise the reader to keep in mind that we think of $V$ as a representation of the quiver $Q$ and not of any group to avoid confusion. Moreover, if $V \in \Rep(Q,\alpha)_\R = \bigoplus_{a \in Q_1} \Mat_{\alpha(ha),\alpha(ta)}(\R)$, then it can be thought of as both a real and complex representation of $Q$.
\end{remark}

\subsection{Stability notions}
We follow the conventions from \cite{DW-book} for consistency. For this section, we fix $K =\C$. We intentionally refrain from defining stability notions in the case of $K = \R$ to avoid confusion.

Let $Q$ be a quiver with no oriented cycles (self loops are counted as oriented cycles). Let $\alpha$ be a dimension vector. Consider the subgroup $\SL(\alpha) = \prod_{x \in Q_0} \SL(\alpha(x)) \subseteq \GL(\alpha)$. Then, the invariant ring for the action of $\SL(\alpha)$ on $\Rep(Q,\alpha)$ is called the ring of semi-invariants
$$
\SI(Q,\alpha) = K[\Rep(Q,\alpha)]^{\SL(\alpha)}.
$$

For any $\sigma = (\sigma(x))_{x \in Q_0} \in \Z^{Q_0}$ (which we call a weight), we have a character of $\GL(\alpha)$ which we also denote $\sigma$ by abuse of notation. The character $\sigma: \GL(\alpha) \rightarrow K^\times$ is given by $\sigma((g_x)_{x \in Q_0}) = \prod_{x \in Q_0} \det(g_x)^{\sigma(x)}$. The ring of semi-invariants has a decomposition

$$
{\rm SI}(Q,\alpha) = \bigoplus_{\sigma \in \Z^{Q_0}} {\rm SI}(Q,\alpha)_\sigma,
$$
where $\SI(Q,\alpha)_\sigma = \{f \in {\rm SI}(Q,\alpha) \ |\ f(g \cdot x) = \sigma(g^{-1}) f(x)\  \forall g \in \GL(\alpha)\}$. 

 We define the \emph{effective cone of weights} 
$$
\Sigma(Q,\alpha) := \{\sigma \in \Z^{Q_0}\ |\ {\rm SI}(Q,\alpha)_{m\sigma} \neq 0\ \text{for some } m \in \Z_{>0}\}.
$$
For a weight $\sigma$ and a dimension vector $\beta$, we define $\sigma(\beta) := \sum_{x \in Q_0} \sigma(x) \beta(x)$. We point out here that every $\sigma \in \Sigma(Q,\alpha)$ must satisfy $\sigma(\alpha) = 0$. For each $0 \neq \sigma \in \Sigma(Q,\alpha)$, we consider the subring
$$
\SI(Q,\alpha,\sigma) := \oplus_{m=0}^\infty \SI(Q,\alpha)_{m\sigma}.
$$

For a sincere dimension vector $\alpha$ (i.e., $\alpha(x) \neq 0 \ \forall x \in Q_0$), it turns out that this subring can also be seen as an invariant ring, i.e., $\SI(Q,\alpha,\sigma) = K[\Rep(Q,\alpha)]^{\GL(\alpha)_\sigma}$ where $\GL(\alpha)_\sigma = \{g \in \GL(\alpha)\ |\ \sigma(g) = 1\}$. Note that $\GL(\alpha)_\sigma$ is a reductive group. It is well-known that the associated projective variety ${\rm Proj}(\SI(Q,\alpha,\sigma))$ defines a moduli space for the $\alpha$-dimensional representations of $Q$, see \cite{King}.

We make a definition following King \cite{King}. We follow the convention from \cite{DW-book} which is consistent with our notational choices so far, but differs from King's original convention by a sign.

\begin{definition} [King \cite{King}] \label{crit-king}
Let $Q$ be a quiver with no oriented cycles, $V$ be a representation of $Q$ and $\sigma \in \Z^{Q_0}$ a weight such that $\sigma(\underline\dim V) = 0$.
\begin{itemize}
\item $V$ is $\sigma$-semistable if $\sigma(\beta) \leq 0$ for all $\beta \in \Z_{\geq 0}^{Q_0}$ such that $V$ contains a subrepresentation of dimension $\beta$.
\item $V$ is $\sigma$-stable if $\sigma(\beta) < 0$ for all $\beta \in \Z_{\geq 0}^{Q_0}$ (other than $0$ and $\underline{\dim}(V)$) such that $V$ contains a subrepresentation of dimension $\beta$.
\item $V$ is $\sigma$-polystable if $V = V_1 \oplus V_2 \oplus \dots \oplus V_k$ such that $V_i$ are all $\sigma$-stable representations.
\end{itemize}
\end{definition}

%Observe here that any $\sigma$-stable representation must be indecomposable, i.e., it cannot be written as a direct sum of (proper) subrepresentations. Indeed, suppose $V = V_1 \oplus V_2$, then $0 = \sigma(\underline{\dim} V) =  \sigma(\underline{\dim} V_1) + \sigma(\underline{\dim} V_2)$. Hence at least one of $\sigma(\underline{\dim} V_i) \geq 0$, and hence $V$ cannot be $\sigma$-stable. Also observe that if $V$ is a direct sum $V = V_1 \oplus V_2 \oplus \dots \oplus V_k$, then $V$ is $\sigma$-semistable (or $\sigma$-polystable) if and only if all the $V_i$ are. Thus, in order to understand whether a generic representation of dimension $\alpha$ is $\sigma$-semistable/polystable/stable, it is useful to understand how it decomposes as a direct sum of indecomposables, which is the topic of discussion in the next section. 

We now relate $\sigma$-stability notions to $\GL(\alpha)_\sigma$-stability notions. The following result is due to King. For a complete and self-contained proof, see the appendix in \cite{derksen2020maximum}. Note however that in \cite{derksen2020maximum}, the following result was stated for $\sigma$ indivisible, but it is clear from the proof that this is not necessary.

\begin{theorem} [King \cite{King}] \label{theo:King}
Let $Q$ be a quiver with no oriented cycles, $\alpha \in \Z_{>0}^{Q_0}$ a sincere dimension vector and $0 \neq \sigma \in \Z^{Q_0}$ such that $-\sigma \notin \Sigma(Q,\alpha)$. A representation $V \in \Rep(Q,\alpha)$ is $\sigma$-semistable (resp. $\sigma$-polystable, $\sigma$-stable) if and only if $V$ is $\GL(\alpha)_\sigma$-semistable (resp. $\GL(\alpha)_\sigma$-polystable, $\GL(\alpha)_\sigma$-stable).
\end{theorem}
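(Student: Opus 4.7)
The plan is to reduce all three equivalences (semistability, polystability, stability) to manipulations of the semi-invariant ring, with the hypothesis $-\sigma \notin \Sigma(Q,\alpha)$ serving as the crucial tool that eliminates negative-weight semi-invariants. The starting observation is that $\GL(\alpha)_\sigma$ is the kernel of the character $\sigma$, so the quotient $\GL(\alpha)/\GL(\alpha)_\sigma$ is a one-dimensional torus $K^\times$ (nontrivial because $\sigma \neq 0$ and $\alpha$ is sincere). Decomposing $K[\Rep(Q,\alpha)]^{\GL(\alpha)_\sigma}$ into isotypic components under this quotient torus yields $\bigoplus_{n \in \Z} \SI(Q,\alpha)_{n\sigma}$, and applying $-\sigma \notin \Sigma(Q,\alpha)$ forces $\SI(Q,\alpha)_{n\sigma} = 0$ for all $n < 0$, leaving exactly $\SI(Q,\alpha,\sigma)$.

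For the semistability equivalence I would proceed in two steps. Since $\GL(\alpha)_\sigma$ is reductive, the standard GIT criterion gives that $V$ is $\GL(\alpha)_\sigma$-semistable iff some positive-degree element of $\SI(Q,\alpha,\sigma)$ is non-vanishing at $V$, iff some $f \in \SI(Q,\alpha)_{n\sigma}$ with $n > 0$ satisfies $f(V) \neq 0$. The second step is to identify this latter condition with King's combinatorial definition. This is the content of King's original theorem, proved by applying Hilbert-Mumford to the enlarged $\GL(\alpha)$-action on $\Rep(Q,\alpha) \times K$ (with $\GL(\alpha)$ acting on $K$ through $\sigma^{-1}$): a $1$-parameter subgroup $\lambda$ of $\GL(\alpha)$ corresponds bijectively to a weighted $\Z$-grading $V(x) = \bigoplus_n V(x)_n$, and $\lim_{t \to 0} \lambda(t) V$ exists precisely when each $V^{\geq n} := \bigoplus_{m \geq n} V(x)_m$ is a subrepresentation of $V$; by summation by parts the weight $\langle \sigma, \lambda \rangle$ equals $\sum_n \sigma(\underline{\dim} V^{\geq n})$, so the HM numerical criterion becomes exactly ``$\sigma(\underline{\dim} W) \leq 0$ for every subrepresentation $W \subseteq V$''.

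For polystability and stability I would appeal to the standard GIT/representation-theoretic dictionary. Every $\sigma$-semistable representation admits a Jordan-H\"older filtration in the abelian category of $\sigma$-semistables (whose simple objects are precisely the $\sigma$-stables); its associated graded is $\sigma$-polystable and is the unique closed $\GL(\alpha)_\sigma$-orbit sitting inside $\overline{\GL(\alpha)_\sigma V}$, so $V$ is $\GL(\alpha)_\sigma$-polystable iff $V$ equals its own associated graded iff $V$ is $\sigma$-polystable. For stability, if $V$ is $\sigma$-stable it is simple in the abelian category, so Schur's lemma (using $K$ algebraically closed) gives endomorphism algebra $K$ and stabilizer $(\GL(\alpha)_\sigma)_V = \Delta$, making the quotient trivial and hence finite. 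Conversely, if $V = \bigoplus_{i=1}^k V_i$ is $\sigma$-polystable with $k \geq 2$, the stabilizer modulo $\Delta$ contains $(K^\times)^{k-1}$ and is infinite, so $\GL(\alpha)_\sigma$-stability forces $k = 1$. The main technical obstacle will be the polystability step: one must argue via a $1$-parameter subgroup in $\GL(\alpha)_\sigma$ that the Jordan-H\"older associated graded lies in the orbit closure of $V$ and realizes the unique closed orbit there, which parallels Luna's slice theorem and uses the acyclicity of $Q$ to ensure the filtrations behave well.
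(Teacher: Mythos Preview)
The paper does not supply its own proof of this theorem; it attributes the result to King \cite{King} and directs the reader to the appendix of \cite{derksen2020maximum} for a complete self-contained argument, noting only that the indivisibility assumption on $\sigma$ made there is unnecessary. Your proposal is a correct outline of precisely that standard argument: the identification $K[\Rep(Q,\alpha)]^{\GL(\alpha)_\sigma} = \SI(Q,\alpha,\sigma)$ via the hypothesis $-\sigma \notin \Sigma(Q,\alpha)$ (which the paper records just before the theorem), the Hilbert--Mumford computation translating GIT-semistability into King's subrepresentation inequalities, and the Jordan--H\"older and Schur's-lemma reasoning for polystability and stability. There is nothing further in the paper to compare against.
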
 
%King's original formulation is slightly different from the one above, but can be seen to be equivalent (details in Appendix~\ref{App.stability}). Now, we proceed to discuss these stability notions for the $m$-Kronecker quiver.

\begin{definition} 
We say a dimension vector $\alpha$ is $\sigma$-semistable/polystable/stable if a generic representation $V \in \Rep(Q,\alpha)$ is $\sigma$-semistable/polystable/stable. 
\end{definition}

%%%%%%%%%%%%%%%%%
\subsection{Canonical decomposition, Schur roots and Schofield's result}
%%%%%%%%%%%%%%%%%
In this subsection, we will recall some established results in the theory of quiver representations that we will need. First a series of definitions. For a representation $V$ of $Q$, we denote by $\End_Q(V)$ the endomorphism ring of $V$ (in the category of quiver representations).

\begin{definition} [Schur root] \label{def:schur-root}
Let $Q$ be a quiver and let $\alpha$ be a dimension vector. We say $\alpha$ is a Schur root if it satisfies any of the following equivalent conditions:
\begin{itemize}
\item $\End_Q(V) = \C$ for some $V \in \Rep(Q,\alpha)$.
\item $\End_Q(V) = \C$ for generic $V \in \Rep(Q,\alpha)$.
\item A generic $V \in \Rep(Q,\alpha)$ is indecomposable.
\end{itemize}
\end{definition}

We indicate briefly why the three conditions in the definition above are equivalent. The equivalence of the first two conditions follows from a standard argument in algebraic geometry using the fact that fiber dimension is upper semi-continuous. If $\End(V) = \C$, then $V$ must be indecomposable because the endomorphism ring is local, so this establishes that the second condition implies the third. The converse is non-trivial because the endomorphism rings of indecomposable representations are local and there are many other possiblities for local rings other than $\C$. We refer to \cite[Proposition~1]{Kac2} for a proof. 

The concept of canonical decomposition is due to Kac \cite{Kac, Kac2}.

\begin{definition}[Canonical decomposition]
Let $Q$ be a quiver and $\alpha$ a dimension vector. Then we write
$$
\alpha = \beta_1 \oplus \beta_2 \oplus \dots \oplus \beta_l
$$
and call it the canonical decomposition if a generic $V \in \Rep(Q,\alpha)$ splits as $V = V_1 \oplus V_2 \oplus \dots \oplus V_l$ with $V_i$ indecomposable of dimension $\beta_i$.
\end{definition}

Canonical decomposition will play an important role in our arguments in later sections. The following result is also due to Kac, but we first need a definition. For representations $V$ and $W$ of $Q$, we denote by $\Ext(V,W)$ the first extension group. We do not want to digress too much here about extension groups, so we refer the interested reader to \cite{DW-book}. For dimension vectors $\gamma $ and $\delta$ of $Q$, we denote by $\ext(\gamma,\delta)$ the generic value of $\dim(\Ext(V,W))$ for $(V,W) \in \Rep(Q,\gamma) \times \Rep(Q,\delta)$.

\begin{theorem} [Kac \cite{Kac2}] \label{thm:kac-ext}
Let $Q$ be a quiver and $\alpha$ a dimension vector. Then $\alpha = \beta_1 \oplus \beta_2 \oplus \dots \oplus \beta_l$ is the canonical decomposition if and only if both the following conditions hold:
\begin{itemize}
\item $\beta_i$ is a Schur root for all $i$;
\item $\ext(\beta_i,\beta_j) = 0$ for all $i \neq j$.
\end{itemize}
\end{theorem}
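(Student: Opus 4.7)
The plan is to establish the equivalence using two main technical tools: Voigt's identity
$$\dim \Rep(Q,\alpha) - \dim \GL(\alpha)\cdot V = \dim \Ext^1(V,V),$$
which expresses the codimension of an orbit in terms of self-extensions, together with the additivity $\Ext^1(\bigoplus V_i, \bigoplus V_j) = \bigoplus_{i,j} \Ext^1(V_i, V_j)$ and the Krull--Schmidt uniqueness of indecomposable decompositions. Throughout, I treat $V \in \Rep(Q,\alpha)$ as generic to mean it lies in a $\GL(\alpha)$-stable dense open subset.

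For the forward direction, assume $\alpha = \beta_1 \oplus \cdots \oplus \beta_l$ is the canonical decomposition. Consider the morphism
$$\mu : \GL(\alpha) \times \prod_{i=1}^l \Rep(Q,\beta_i) \longrightarrow \Rep(Q,\alpha), \qquad (g,(W_i)_i) \longmapsto g\cdot(W_1\oplus\cdots\oplus W_l).$$
By definition of canonical decomposition, a generic $V$ lies in the image of $\mu$ and decomposes into exactly $l$ indecomposables of dimensions $\beta_i$. Krull--Schmidt applied to $\oplus W_i$ then forces each $W_i$ to be indecomposable for generic choice, so each $\beta_i$ is a Schur root. To rule out $\ext(\beta_i,\beta_j) > 0$ for some $i\ne j$, I would take generic indecomposable $V_i$ of dimension $\beta_i$ and compute, via Voigt and additivity, that the codimension of the orbit of $V=\bigoplus V_i$ equals $\sum_{i,j}\ext(\beta_i,\beta_j)$. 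Since $V$ is generic by hypothesis, this codimension equals $\ext(\alpha,\alpha)$, and so is minimal among all representations of dimension $\alpha$. If $\ext(\beta_i,\beta_j) > 0$ for some $i\ne j$, one would form a non-split extension $0 \to V_j \to E \to V_i \to 0$, apply $\Hom(E,-)$ and $\Hom(-,E)$ long exact sequences, and show $\dim \Ext^1(E,E) < \dim \Ext^1(V_i\oplus V_j, V_i\oplus V_j)$. Then $E \oplus \bigoplus_{k\ne i,j} V_k$ would provide a representation of dimension $\alpha$ with strictly smaller $\Ext^1$, contradicting minimality.

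For the backward direction, suppose each $\beta_i$ is Schur and $\ext(\beta_i,\beta_j) = 0$ for $i \ne j$. Choose generic $V_i \in \Rep(Q,\beta_i)$; each is indecomposable. For $V = \bigoplus V_i$, additivity and the hypothesis yield $\dim \Ext^1(V,V) = \sum_i \ext(\beta_i,\beta_i)$. Combined with Voigt's identity and the inequality $\ext(\alpha,\alpha) \geq \sum_i \ext(\beta_i,\beta_i)$ (which follows by restricting attention to representations of the form $\bigoplus V_i'$), one concludes this is the minimal possible codimension, placing $V$ in the unique open orbit among representations with a decomposition of this shape. A density argument — showing that every irreducible component of $\Rep(Q,\alpha)$ meeting the image of $\mu$ is the whole space — then yields that the generic representation of dimension $\alpha$ is isomorphic to $\bigoplus V_i$, and Krull--Schmidt gives that this is its indecomposable decomposition.

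The main obstacle is the Ext-vanishing step in the forward direction: carefully verifying via the long exact sequence that a non-split extension $E$ has strictly smaller $\dim \Ext^1(E,E)$ than $V_i \oplus V_j$, and then deducing that the corresponding $V'$ of dimension $\alpha$ has strictly smaller orbit codimension than our putatively generic $V$. Equivalently, one must show that the degeneration $E \rightsquigarrow V_i \oplus V_j$ in $\Rep(Q, \beta_i + \beta_j)$ forces $V_i\oplus V_j$ out of the generic orbit — a statement which is at the heart of Schofield's and Kac's analysis of $\ext$ and which requires the explicit control of Hom-dimensions in general position provided by Schofield's formula for $\ext(\gamma,\delta)$ as a max over subrepresentation dimensions.
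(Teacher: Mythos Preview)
The paper does not prove this theorem; it is quoted as a result of Kac and used as a black box. There is thus no in-paper argument to compare against, so I assess your proposal on its own merits.

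Your forward direction is correct, and the ``main obstacle'' you flag at the end is not a real one. If $0 \to V_j \to E \to V_i \to 0$ is non-split, scaling the extension class to zero exhibits $V_i \oplus V_j$ as a proper degeneration of $E$; since $E \not\cong V_i\oplus V_j$, upper semicontinuity of $\dim\Ext^1(-,-)$ along orbit closures gives the strict inequality $\dim\Ext^1(E,E) < \dim\Ext^1(V_i\oplus V_j, V_i\oplus V_j)$ immediately, with no appeal to Schofield's formula. The same one-parameter family degenerates $E \oplus \bigoplus_{m\ne i,j} V_m$ to $V$, contradicting genericity of $V$.

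The genuine gap is in your backward direction. You claim $\ext(\alpha,\alpha) \geq \sum_i \ext(\beta_i,\beta_i)$ ``follows by restricting attention to representations of the form $\bigoplus V_i'$,'' but this is backwards: $\ext(\alpha,\alpha)$ is the \emph{minimum} of $\dim\Ext^1(W,W)$ over all $W$ of dimension $\alpha$, so restricting to a subfamily only yields the opposite inequality $\ext(\alpha,\alpha) \leq \sum_i \ext(\beta_i,\beta_i)$. Knowing that $V=\bigoplus V_i$ has minimal $\Ext^1$ would in any case not show that the generic representation decomposes with these dimension vectors; what is needed is that $\mu$ is dominant, and this is exactly where the hypothesis $\ext(\beta_i,\beta_j)=0$ enters. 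The standard argument computes the differential of $\mu$ at $(e,(V_i))$ in block form: the diagonal $(i,i)$ blocks of $\Rep(Q,\alpha)$ are surjected onto by the $\prod_i \Rep(Q,\beta_i)$ factor, while on the off-diagonal $(i,j)$ block the Lie-algebra action has cokernel precisely $\Ext^1(V_i,V_j)$. Hence $\coker\, d\mu \cong \bigoplus_{i\neq j}\Ext^1(V_i,V_j)$, which vanishes for generic $(V_i)$ by hypothesis, so $\mu$ is a submersion there and hence dominant. Your ``density argument'' does not supply this, and the reference to irreducible components of $\Rep(Q,\alpha)$ is moot since that space is affine and irreducible.
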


\begin{remark} \label{rmk:ext-gen-min}
It is again a consequence of the upper semi-continuity of fiber dimensions that $\ext(\gamma,\delta)$ is the minimum value of $\dim(\Ext(V,W))$ for $(V,W) \in \Rep(Q,\gamma) \times \Rep(Q,\delta)$.
\end{remark}

We now turn to recalling a result of Schofield. First, some definitions. For a quiver $Q$, we define the \emph{Euler form}, denoted $\left<-,-\right>$, on $\Z^{Q_0}$ as $\left<\alpha,\beta\right> = \sum_{x\in Q_0} \alpha(x) \beta(x) - \sum_{a \in Q_1} \alpha(ta) \beta(ha)$. The symmetrized version, often called Cartan form is defined as $(\alpha, \beta) = \left<\alpha,\beta\right> + \left<\beta,\alpha\right>$

% \CF{visu define symmetric euler form $(\cdot, \cdot)$ also}

\begin{definition} [Schofield weight and Schofield stability]
Let $Q$ be a quiver and $\alpha$ a dimension vector. The Schofield weight $\sigma$ is the weight defined by $\sigma(\beta) = \left<\alpha, \beta \right> - \left<\beta, \alpha \right>$. Further, we say that $\alpha$ is Schofield semistable/polystable/stable if $\alpha$ is $\sigma$-semistable/polystable/stable for the Schofield weight $\sigma$.
\end{definition}

Then reason for the above definition is clarified by the following theorem of Schofield which will be crucial for us in many ways. 

\begin{theorem} [Schofield] \label{thm:schofield}
Let $Q$ be a quiver and $\alpha$ a dimension vector. Then $\alpha$ is a Schur root if and only if $\alpha$ is Schofield stable.
\end{theorem}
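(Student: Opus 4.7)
The plan is to use King's criterion (Theorem~\ref{theo:King}) to translate $\sigma_\alpha$-stability into a geometric stability statement about the action of $\GL(\alpha)_{\sigma_\alpha}$ on $\Rep(Q,\alpha)$, and to combine this with Hom-vanishing arguments for the forward direction and a stabilizer dimension count for the reverse direction.

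For the direction Schofield stable $\Rightarrow$ Schur root, I would take a generic $V$ that is $\sigma_\alpha$-stable; by King's criterion the stabilizer $\mathrm{Aut}_Q(V) \cap \GL(\alpha)_{\sigma_\alpha}$ is finite modulo the kernel $\Delta$ of the action. Since $\mathrm{Aut}_Q(V)$ is open dense in the finite-dimensional algebra $\End_Q(V)$, and $\Delta$ contains the scalar $\mathbb{C}^* \cdot I$ (which sits inside $\GL(\alpha)_{\sigma_\alpha}$ via $\sigma_\alpha(\alpha)=0$), a dimension count gives $\dim \End_Q(V) \leq 2$. To rule out $\dim=2$ one must separately exclude the decomposable case $V = V_1 \oplus V_2$ (where $\sigma_\alpha(\underline{\dim}\, V_i) = 0$ for each summand, contradicting strict stability applied to the subrepresentation $V_1$) and the indecomposable-but-non-Schur case (where the unipotent radical of $\mathrm{Aut}_Q(V)$ survives the stability quotient and yields a positive-dimensional stabilizer mod $\Delta$). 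This forces $\End_Q(V) = \mathbb{C}$, so $\alpha$ is a Schur root.

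For the direction Schur root $\Rightarrow$ Schofield stable, I would take a generic $V$ with $\End_Q(V) = \mathbb{C}$, fix any proper nonzero subrepresentation $U \subset V$ of dimension $\beta$, and show $\sigma_\alpha(\beta) \leq -1$. Three Hom vanishings follow from indecomposability: $\mathrm{Hom}(V,U)=0$ and $\mathrm{Hom}(V/U,V)=0$ (any nonzero such map, composed with the canonical maps of $0 \to U \to V \to V/U \to 0$, yields a non-scalar endomorphism of $V$), and $\mathrm{Hom}(V/U,U)=0$ (such a composition produces a nilpotent element of $\End_Q(V)$, impossible in $\mathbb{C}$). Applying $\mathrm{Hom}(-,U)$ to the sequence and invoking the first and third vanishings yields an injection $\End(U) \hookrightarrow \Ext^1(V/U,U)$, so $\dim \Ext^1(V/U,U) \geq 1$. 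On the other hand, since $\beta$ occurs as the dimension of a subrepresentation of a generic $V$, Schofield's theorem on generic subrepresentations (whose proof rests on the machinery of Theorem~\ref{thm:kac-ext} and Remark~\ref{rmk:ext-gen-min}) gives $\ext(\beta,\alpha-\beta)=0$; and by the dominance of the map $(V,U)\mapsto (U,V/U)$ from the incidence variety of pairs to $\Rep(Q,\beta)\times\Rep(Q,\alpha-\beta)$ (which is surjective because every target pair extends trivially), I can take $(U,V/U)$ to be generic so that $\dim \Ext^1(U,V/U)=0$. Combining these with the Euler-form identities gives
\[
\sigma_\alpha(\beta) \;=\; -\dim \Ext^1(V/U,U) - \dim \mathrm{Hom}(U,V/U) + \dim \Ext^1(U,V/U) \;\leq\; -1,
\]
which completes the proof since $\sigma_\alpha(\beta)$ depends only on $\beta$.

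The hardest aspect is the interplay between genericity of $V$ and genericity of the pair $(U,V/U)$: while $\Ext^1$ is only upper-semicontinuous, the dominance of the incidence-variety map ensures that for at least one generic pair $(V,U)$ the image $(U,V/U)$ lies in the dense open locus where $\dim \Ext^1(U,V/U)$ attains its minimum value $0$, which is exactly what is needed to extract the strict inequality $\sigma_\alpha(\beta) < 0$.
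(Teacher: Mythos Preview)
The paper does not prove this theorem; it is quoted as a result of Schofield and used as a black box. So there is no ``paper's own proof'' to compare against, and I will simply assess your argument on its merits.

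The direction Schofield stable $\Rightarrow$ Schur is essentially fine, though unnecessarily roundabout. You never verify the hypothesis $-\sigma_\alpha\notin\Sigma(Q,\alpha)$ needed to invoke Theorem~\ref{theo:King}, and in fact one can bypass King entirely: if $V$ is $\sigma_\alpha$-stable in the sense of Definition~\ref{crit-king} and $\phi\in\End_Q(V)$, pick an eigenvalue $\lambda$ of some $\phi_x$; then $\ker(\phi-\lambda I)$ and $\mathrm{im}(\phi-\lambda I)$ are subrepresentations with complementary dimension vectors, and strict stability forces one of them to be $0$ or $V$, whence $\phi=\lambda I$.

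The direction Schur $\Rightarrow$ Schofield stable, however, has a fatal gap. Your claim that $\beta\hookrightarrow\alpha$ implies $\ext(\beta,\alpha-\beta)=0$ is false; this is not what Schofield's theorem on general subrepresentations says. Worse, the strategy is self-defeating: if for some proper nonzero $U\subset V$ one had $\Ext^1(U,V/U)=0$, then the short exact sequence $0\to U\to V\to V/U\to 0$ would split, giving $V\cong U\oplus (V/U)$ and hence $\dim\End_Q(V)\geq 2$. So the two genericity conditions you wish to impose simultaneously --- that $V$ be Schur and that $(U,V/U)$ lie in the locus where $\Ext^1(U,V/U)=0$ --- are mutually exclusive. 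Since the incidence variety $Z$ is irreducible (it is a vector bundle over a product of Grassmannians) and the Schur locus is dense in $Z$ whenever $\alpha$ is Schur, one actually concludes $\ext(\beta,\alpha-\beta)>0$ for every $0\neq\beta\neq\alpha$ with $\beta\hookrightarrow\alpha$, the opposite of what you need. Your displayed formula for $\sigma_\alpha(\beta)$ is correct, but the term $\dim\Ext^1(U,V/U)$ cannot be made to vanish, and without that the inequality $\sigma_\alpha(\beta)\leq -1$ does not follow. Schofield's actual proof of this direction is considerably more delicate and does not proceed via this kind of direct Ext-count on a single short exact sequence.
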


%%%%%%%%%%%%%%%%%%%%%%%%%%%%%%%%%%%%%
\section{Relating iPCA to stability of star quivers} \label{sec:ipca-star}
%%%%%%%%%%%%%%%%%%%%%%%%%%%%%%%%%%%%%
In this section, we will see that the interpretation of iPCA as a Gaussian group model in Section~\ref{sec:ipca-gaussianmodel} is quiver theoretic in nature. The goal of this subsection is to relate (generic) existence and uniqueness of iPCA in a precise manner to Schofield stability for star quivers. This is quite straightforward for the complex version of iPCA, but we need some additional results to make the transfer to real iPCA. We stress that we relate maximum likelihood estimation for real iPCA directly to $\sigma$-stability (see Theorem~\ref{thm:realipca-to-sigma}), which is a property of complex representations of quivers rather than real representations of quivers! In the theory of quiver representations, complex representations are better understood than real representations, the underlying reason is an expected one -- the field of complex numbers is algebraically closed whereas the field of real numbers is not. 

\begin{remark}
Let $\alpha = (p,q_1,\dots,q_k)$ be a dimension vector for $\Delta[k]$, and let $n = \sum_i q_i$. Then, the Schofield weight of $\alpha$ is the weight $\sigma = (-n,p,p,\dots,p)$.
\end{remark}

We observe the following:
\begin{proposition} \label{prop:Malpha-to-sigma}
Let $K = \C$. Let $p,q_1,\dots,q_k \in \Z_{\geq 1}$ and let $n = \sum_i q_i$, let $\alpha = (p,q_1,\dots,q_k)$ be a dimension vector of $\Delta[k]$. Consider the complex model $\mathcal{M}_\alpha$. Then, for generic data $(B_1,\dots,B_k) \in \oplus_{i=1}^k \mat_{p,q_i}(\C)$:
\begin{itemize}
\item The likelihood function is bounded above  if and only if $\alpha$ is Schofield semistable;
\item MLE exists if and only if $\alpha$ is Schofield polystable;
\item MLE exists uniquely if and only if $\alpha$ is Schofield stable.
\end{itemize}
\end{proposition}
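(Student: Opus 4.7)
The strategy I propose is to combine \cref{prop:ipca-to-gaussianmodel} with King's \cref{theo:King}. For $K = \C$, \cref{prop:ipca-to-gaussianmodel} states that, pointwise in $B \in \Rep(\Delta[k],\alpha)$, boundedness of the likelihood function (respectively existence, uniqueness of an MLE) is equivalent to $B$ being $\GL(\alpha)_\sigma$-semistable (respectively polystable, stable) for $\sigma = (-n, p, \dots, p)$. By the remark preceding the proposition, this $\sigma$ is precisely the Schofield weight of $\alpha$, so \cref{theo:King} will allow me to translate each $\GL(\alpha)_\sigma$-stability notion into the corresponding $\sigma$-stability notion for quiver representations; passing to generic $B$ then closes the argument.

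To invoke \cref{theo:King} I need to verify its hypotheses: that $\alpha$ is sincere (immediate from $p, q_i \geq 1$) and that $-\sigma \notin \Sigma(\Delta[k], \alpha)$. For the latter, I would observe that for any $V \in \Rep(\Delta[k], \alpha)$ and any one-dimensional subspace $L \subseteq V(x)$, setting $U(x) = L$ and $U(y_i) = 0$ defines a subrepresentation of $V$ with dimension vector $\beta = (1, 0, \dots, 0)$; the subrepresentation condition is vacuous because each $V(a_i)$ is being restricted to the zero subspace. Evaluating $(-\sigma)(\beta) = n > 0$ shows that $V$ violates the numerical criterion for $(-\sigma)$-semistability. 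Hence no representation of dimension $\alpha$ is $(-\sigma)$-semistable, which in turn forces $-\sigma \notin \Sigma(\Delta[k], \alpha)$ since effectivity of a weight is equivalent to the existence of a representation semistable with respect to it (via the non-vanishing semi-invariant criterion).

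With the hypotheses checked, \cref{theo:King} yields, for every $B \in \Rep(\Delta[k], \alpha)$, the equivalence of $\GL(\alpha)_\sigma$-semi/poly/stability with $\sigma$-semi/poly/stability. Composing with \cref{prop:ipca-to-gaussianmodel}, for every $B$, boundedness of the likelihood corresponds to $\sigma$-semistability, existence of an MLE to $\sigma$-polystability, and uniqueness of the MLE to $\sigma$-stability. Taking $B$ generic and invoking the definition that ``$\alpha$ is $\sigma$-semistable/polystable/stable'' means a generic representation is so, together with the identification of $\sigma$ as the Schofield weight of $\alpha$, yields the three claimed equivalences.

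The main obstacle I anticipate is verifying the effectivity condition $-\sigma \notin \Sigma(\Delta[k], \alpha)$ required by King's theorem. The one-dimensional-subspace argument is specific to star quivers and uses crucially that all arrows point into the central vertex $x$, so that any line in $V(x)$ is automatically a subrepresentation; in more general quiver settings this step would be considerably more delicate. The remainder of the proof is essentially bookkeeping between the Gaussian-group-model and quiver-representation pictures.
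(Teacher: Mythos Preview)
Your proof is correct and follows the same overall structure as the paper's: identify $\sigma=(-n,p,\dots,p)$ as the Schofield weight, verify the hypothesis $-\sigma\notin\Sigma(\Delta[k],\alpha)$ of \cref{theo:King}, and then combine \cref{theo:King} with \cref{prop:ipca-to-gaussianmodel}. The only difference is in how you check $-\sigma\notin\Sigma(\Delta[k],\alpha)$: the paper invokes the determinantal description of semi-invariants and the absence of paths from $x$ to the $y_i$, whereas you give a direct, elementary argument by exhibiting the subrepresentation $\beta=(1,0,\dots,0)$ in every $V$ and observing $(-\sigma)(\beta)=n>0$. Your route avoids the external references and is self-contained within King's numerical criterion; the paper's route is shorter to state once one knows the cited results. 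Both are perfectly valid.
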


\begin{proof}
The Schofield weight $\sigma$ is $(-n,p,p,\dots,p)$, the same weight from Proposition~\ref{prop:ipca-to-gaussianmodel}. To show that $-\sigma \notin \Sigma(Q,\alpha)$, so that the hypotheses of Theorem~\ref{theo:King} are satisfied, observe that since there are no paths from $x$ to $y_i$, we deduce that $\SI(\Delta[k],\alpha)_{-d \sigma} = 0$ for all $d \in \Z_{\geq 1}$ from the determinantal description of semi-invariants of quivers (see \cite{DZ, SVd, DW-LR}). Now, the proposition follows from Proposition~\ref{prop:ipca-to-gaussianmodel} and Theorem~\ref{theo:King}.  
\end{proof}

In Proposition~\ref{prop:Malpha-to-sigma} above, we related the complex model $\mathcal{M}_{\alpha,\C}$ to $\sigma$-stability for the Schofield weight $\sigma$. But we are more interested in the real model $\mathcal{M}_{\alpha,\R}$ because the maximum likelihood estimation for this real model is precisely (real) iPCA, the main object of study in this paper. Hence, we prove the following result:

%So, we would like to establish an analog of the Proposition~\ref{prop:Malpha-to-sigma} in the case $K = \R$. However, the definition of $\alpha$ being $\sigma$-semistable/polystable/stable is only defined for complex representations of the quivers. Moreover, what we will now proceed to do is to relate the maximum likelihood estimation for the real model $\mathcal{M}_{\alpha,\R}$ (equivalently iPCA) to $\sigma$-stability!

\begin{theorem} \label{thm:realipca-to-sigma}
Let $p,q_1,\dots,q_k \in \Z_{\geq 1}$ and let $n =\sum_i q_i$ and let $\alpha = (p,q_1,\dots,q_k)$ be a dimension vector of $\Delta[k]$. Then, for generic data $B = (B_1,\dots,B_k) \in \oplus_{i=1}^m \mat_{p,q_i}(\R)$:
\begin{itemize}
\item Eq~\ref{eq:ipca-def} is bounded above if and only if $\alpha$ is Schofield semistable;
\item iPCA exists if and only if $\alpha$ is Schofield polystable;
\item iPCA exists uniquely if and only if $\alpha$ is Schofield stable.
\end{itemize}
\end{theorem}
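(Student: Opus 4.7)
The plan is to chain three equivalences to translate each iPCA condition on generic real data into the corresponding Schofield stability of $\alpha$. First, \cref{prop:ipca-to-gaussianmodel} with $K=\R$ converts boundedness of \cref{eq:ipca-def}, existence of iPCA, and uniqueness of iPCA into, respectively, real $\GL(\alpha)_\sigma$-semistability, real $\GL(\alpha)_\sigma$-polystability, and (via \cref{lem:rect-AKRS}) compactness of the real stabilizer modulo $\Delta$. Second, for a real $B$, real and complex $\GL(\alpha)_\sigma$-semistability (resp.\ polystability) coincide; this real-complex comparison for actions of real reductive groups is a classical consequence of Kempf--Ness-type results and is carried out in the appendix. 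Third, since $-\sigma \notin \Sigma(\Delta[k], \alpha)$ (verified in the proof of \cref{prop:Malpha-to-sigma}), \cref{theo:King} applied over $\C$ converts complex $\GL(\alpha)_\sigma$-(semi/poly)stability into $\sigma$-(semi/poly)stability of $B$ viewed as a complex representation. Finally, the $\sigma$-semistable locus in $\Rep(\Delta[k],\alpha)_{\C}$ is Zariski-open and defined over $\Q$, so a generic real $B$ lies in it iff a generic complex $V$ does, iff $\alpha$ is Schofield semistable; an analogous density statement via the canonical decomposition handles the polystable case.

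The first two bullets follow by assembling the chain above. The ``$\Leftarrow$'' direction of the stable bullet also follows: if $\alpha$ is Schofield stable then a generic complex $V$ is $\sigma$-stable and hence $\GL(\alpha)_\sigma(\C)$-stable, so has trivial complex stabilizer modulo $\Delta$. For a real $B$ in this locus, the real stabilizer---being a subgroup of the complex one---is also trivial modulo $\Delta$, giving real $\GL(\alpha)_\sigma$-stability and, via \cref{prop:ipca-to-gaussianmodel}, a unique real MLE.

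The hard part is the ``$\Rightarrow$'' direction of the stable bullet: generic uniqueness of real iPCA must force $\alpha$ to be a Schur root (equivalently Schofield stable, by \cref{thm:schofield}). Suppose $\alpha$ is not Schur; then $\dim_\C \End_{\Delta[k]}(V) \geq 2$ for generic complex $V$, and since $\End_\R(B) \ot_\R \C = \End_\C(B_\C)$, also $\dim_\R \End_{\Delta[k],\R}(B) \geq 2$ generically. The canonical decomposition $\alpha = \beta_1 \oplus \cdots \oplus \beta_l$ together with the structure of real indecomposable summands produces a positive-dimensional subgroup of the real stabilizer modulo $\Delta$; the task is to show this subgroup is non-compact. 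The main technical subtlety---and what I expect to be the main obstacle---is ruling out the pathology where a generic real $B$ has a real indecomposable summand with $\End_\R(V_j) = \C$, which would contribute only a compact $S^1$-factor rather than an $\R^*$-factor to the stabilizer. Controlling the real forms appearing in the canonical decomposition at a generic $B$ requires the real-complex comparison results collected in the appendix, after which one concludes that the real stabilizer modulo $\Delta$ is non-compact, contradicting \cref{lem:rect-AKRS} and completing the proof.
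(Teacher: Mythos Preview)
Your overall architecture matches the paper's: reduce the first two bullets and the backward direction of the third to the real/complex stability transfer plus King's theorem, and isolate the forward direction of the stable bullet as the hard step requiring a separate argument about real stabilizers. Two points, one minor and one substantive.

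\textbf{Minor.} You do not mention that $\GL(\alpha)_\sigma$ is disconnected, so the real--complex transfer (Proposition~\ref{gen.stable.transfer}) does not apply directly. The paper handles this by passing to the identity component $\GL(\alpha)_\mu$ for the indivisible weight $\mu$ proportional to $\sigma$, using Lemma~\ref{lem:conn-component-stability} on both the complex and real sides.

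\textbf{Substantive.} Your diagnosis of the obstacle in the forward direction is correct: if a generic real $B$ were indecomposable over $\R$ with $\End_\R(B)\cong\C$, then the real stabilizer modulo $\Delta$ would be $\C^*/\R^*\cong S^1$, which is compact, and Lemma~\ref{lem:rect-AKRS} would give no contradiction. But your proposed fix, ``ruling out'' this pathology, is not what the paper does and may not even be possible: for some dimension vectors there can be a full-dimensional set of real $B$'s that are $\R$-indecomposable (think of the analogue of a real $2\times 2$ matrix with complex eigenvalues). The paper instead exploits the distinction between ``not generically $P$'' and ``generically not $P$''. Lemma~\ref{lem:transfer-gen-indec} uses a semi-algebraic argument: since the decomposition map $\GL(\alpha)\times\prod_i\Rep(\Delta[k],\beta_i)\to\Rep(\Delta[k],\alpha)$ is dominant over $\C$, its image on real points is a full-dimensional semi-algebraic set. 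Hence there is a Euclidean-open set of $\R$-decomposable real $B$'s, so one does \emph{not} have generic $\R$-indecomposability. Those decomposable $B$'s have non-compact stabilizer by the explicit construction in Lemma~\ref{lem:dec-compact}, so one does \emph{not} have generic compactness of the stabilizer, hence (via Lemma~\ref{lem:rect-AKRS}) one does \emph{not} have generic uniqueness of the MLE. The logic is negated throughout; the paper's closing remark flags exactly this point.
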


Theorem~\ref{thm:realipca-to-sigma} is not as straightforward as Proposition~\ref{prop:Malpha-to-sigma} and we need additional insights to make the `transfer' from $K = \C$ to $K= \R$. This requires some technical results about varieties and algebraic groups defined over $\R$. Since, we do not need these technical details in later sections, we move them and the proof of Theorem~\ref{thm:realipca-to-sigma} to Appendix~\ref{app: algebraic groups over R} for the sake of readability.

%%%%%%%%%%%%%%%%%%%%%%%%%%%%%%%%%%%%%%%%%
\section{Schofield stability for star quivers}\label{sec:scho-star}
%%%%%%%%%%%%%%%%%%%%%%%%%%%%%%%%%%%%%%%%%
In this section, we will prove some of our main results on existence and uniqueness of iPCA by studying Schofield stability. To do so, we will need to introduce additional techniques from quiver representations, which we will discuss when needed. We set $K = \C$ as the ground field for this entire section.

\subsection{Equivalence of Schofield semistability and Schofield polystability for star quivers}
First, we discuss a result that is specific to the quiver $\Delta[k]$, which may not be true for more general quivers.

\begin{proposition} \label{prop:sigma-ss-ps}
Let $p,q_1,\dots,q_k \in \Z_{>0}$, and let $\alpha = (p,q_1,\dots,q_k)$ be a dimension vector of the star quiver $\Delta[k]$. Then
$$
\alpha \text{ is Schofield semistable} \Longleftrightarrow \alpha \text{ is Schofield polystable}.
$$
\end{proposition}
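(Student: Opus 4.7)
The implication ``polystable $\Rightarrow$ semistable'' is immediate from the definitions, so I only need the converse. My plan is to use the canonical decomposition (\cref{thm:kac-ext}) to split a generic $\sigma$-semistable representation into $\sigma$-stable summands, where $\sigma = (-n, p, \dots, p)$ is the Schofield weight of $\alpha$.

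The first step is to set up the decomposition. Writing $\alpha = \beta_1 \oplus \cdots \oplus \beta_l$ for the canonical decomposition, a generic $V \in \Rep(\Delta[k], \alpha)$ splits as $V = V_1 \oplus \cdots \oplus V_l$ with each $V_i$ a generic representation of the Schur root $\beta_i = (p_i, q_{i,1}, \dots, q_{i,k})$; by \cref{thm:schofield}, each $V_i$ is then $\sigma_i$-stable, where $\sigma_i = (-n_i, p_i, \dots, p_i)$ is the Schofield weight of $\beta_i$ and $n_i = \sum_j q_{i,j}$. Since ``$\sigma$-semistable'' is also a generic open condition on $\Rep(\Delta[k], \alpha)$ (by assumption on $\alpha$), I may pick $V$ satisfying both properties. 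Each $V_i$ and its complementary summand $\bigoplus_{j \neq i} V_j$ are subrepresentations of $V$ of dimensions $\beta_i$ and $\alpha - \beta_i$, so King's criterion (\cref{crit-king}), combined with $\sigma(\alpha) = 0$, forces $\sigma(\beta_i) = 0$ for every $i$.

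The main step is to promote each $V_i$ from $\sigma_i$-stable to $\sigma$-stable. The equation $\sigma(\beta_i) = 0$ reads $p n_i = n p_i$, which forces $n_i \sigma = n \sigma_i$ as integer linear functionals on $\Z^{Q_0}$. Since $n, n_i > 0$ (the degenerate cases $n_i = 0$ or $p_i = 0$ would give $\sigma(\beta_i) \neq 0$ and are ruled out by the previous step), the stability conditions for $\sigma$ and $\sigma_i$ coincide, so each $V_i$ is in fact $\sigma$-stable. Assembling the summands, $V = V_1 \oplus \cdots \oplus V_l$ is $\sigma$-polystable, as desired.

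I expect the proportionality $n_i \sigma = n \sigma_i$ to be the linchpin, and also the reason the statement is specific to star quivers. Any Schofield weight on $\Delta[k]$ has the form $(-m, c, c, \dots, c)$, i.e., is determined up to positive scaling by the ratio $m/c$, and the condition $\sigma(\beta_i) = 0$ pins this ratio for $\sigma_i$ to that of $\sigma$. For a general quiver the Schofield weight of a Schur summand of $\alpha$ need not be a positive multiple of $\sigma$, so $\sigma_i$-stability would not imply $\sigma$-stability and the argument would break down.
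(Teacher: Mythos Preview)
Your proof is correct and follows essentially the same approach as the paper: both use the canonical decomposition $\alpha = \beta_1 \oplus \cdots \oplus \beta_l$, deduce $\sigma(\beta_i) = 0$ from semistability, and then exploit the key observation that for the star quiver this forces the Schofield weight of each $\beta_i$ to be a positive multiple of $\sigma$, so Schofield's theorem upgrades each summand to $\sigma$-stable. The paper packages the equivalence ``$\alpha$ is $\sigma$-polystable $\Leftrightarrow \sigma(\beta_i)=0$ for all $i$'' into a separate lemma (\cref{lem:sigma-ps-can}) and argues by contrapositive, whereas you prove the direct implication inline, but the mathematical content is identical.
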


To prove the proposition above, we need the following lemma.

\begin{lemma} \label{lem:sigma-ps-can}
Let $p,q_1,\dots,q_k \in \Z_{>0}$, and let $\alpha = (p,q_1,\dots,q_k)$ be a dimension vector of the star quiver $\Delta[k]$. Let $\sigma$ be the Schofield weight of $\alpha$. Let $\alpha = \beta_1 \oplus \beta_2 \oplus \dots \oplus \beta_l$ be the canonical decomposition of $\alpha$. Then
$$
\alpha \text{ is $\sigma$-polystable} \Longleftrightarrow \sigma \cdot \beta_i = 0\ \forall i.
$$
\end{lemma}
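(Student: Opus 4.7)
The plan is to prove both directions using the uniqueness of the canonical decomposition (Krull–Schmidt) together with Schofield's theorem (Theorem~\ref{thm:schofield}). The forward direction is fairly structural; the backward direction hinges on a specific numerical feature of the star quiver, namely that whenever $\sigma(\beta_i)=0$ the Schofield weight of $\beta_i$ is automatically a positive rational multiple of $\sigma = \sigma_\alpha$, which lets one transfer stability from one weight to the other.

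For the forward direction, let $V$ be a generic representation, decomposed as $V = \bigoplus_j W_j$ with each $W_j$ being $\sigma$-stable. First I would observe that each $W_j$ is indecomposable: if $W_j = A \oplus B$ with both summands nonzero, then $A$ and $B$ are proper nonzero subrepresentations of $W_j$, so $\sigma$-stability forces $\sigma(\underline{\dim}A) < 0$ and $\sigma(\underline{\dim}B) < 0$, contradicting $\sigma(\underline{\dim}W_j) = \sigma(\underline{\dim}A)+\sigma(\underline{\dim}B)=0$. By Krull–Schmidt, the decomposition $V = \bigoplus_j W_j$ into indecomposables agrees, up to reordering and isomorphism, with the canonical decomposition $V = \bigoplus_i V_i$ where $\underline{\dim} V_i = \beta_i$. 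Hence $\{\beta_i\}$ and $\{\underline{\dim} W_j\}$ coincide as multisets, and so $\sigma(\beta_i) = 0$ for every $i$.

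For the backward direction, assume $\sigma(\beta_i)=0$ for every $i$. Writing $\beta_i = (p_i, q_{i,1}, \ldots, q_{i,k})$ and $r_i = \sum_j q_{i,j}$, a direct calculation with the Euler form on $\Delta[k]$ shows that the Schofield weight of $\beta_i$ equals $\tau_i = (-r_i, p_i, p_i, \ldots, p_i)$. The hypothesis $\sigma(\beta_i)=0$ reads $-n p_i + p r_i = 0$; note $p_i>0$, since $p_i=0$ combined with $p\ge 1$ would force $r_i=0$ and hence $\beta_i=0$, contradicting $\beta_i$ being a summand of the canonical decomposition. Comparing coordinates then gives $p_i\, \sigma = p\, \tau_i$, so $\sigma$ and $\tau_i$ are positive rational multiples of the same vector; in particular, for every dimension vector $\gamma$ of $\Delta[k]$ they agree in sign on $\gamma$. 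Consequently $\sigma$-stability and $\tau_i$-stability coincide on $\Rep(\Delta[k], \beta_i)$. Since $\beta_i$ is a Schur root (by Theorem~\ref{thm:kac-ext}), Schofield's theorem (Theorem~\ref{thm:schofield}) says a generic representation of dimension $\beta_i$ is $\tau_i$-stable, hence also $\sigma$-stable. Taking $V$ generic, its canonical summands $V_i$ are themselves generic in each $\Rep(\Delta[k], \beta_i)$ (a standard consequence of the canonical decomposition), and therefore $\sigma$-stable; so $V = \bigoplus_i V_i$ is $\sigma$-polystable and $\alpha$ is $\sigma$-polystable.

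The main obstacle is the proportionality computation $p_i \sigma = p \tau_i$ in the backward direction. It is precisely here that the star-quiver structure enters: the Schofield weight of a dimension vector $(p,q_1,\dots,q_k)$ assigns the same coefficient $p$ to every source vertex, so any sub-dimension vector with matching ratio $r_i/p_i = n/p$ inherits a proportional Schofield weight. For a general quiver, one has no such alignment, and the usual Schofield-stability of a Schur summand $\beta_i$ (for its own Schofield weight $\tau_i$) need not imply stability with respect to the ambient weight $\sigma_\alpha$, so a different argument would be required.
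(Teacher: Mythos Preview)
Your argument is correct. The backward direction is essentially the paper's own: both compute that the Schofield weight $\tau_i$ of a summand $\beta_i$ is a positive scalar multiple of $\sigma$ once $\sigma(\beta_i)=0$, and then invoke Schofield's theorem to upgrade ``Schur root'' to ``$\sigma$-stable''. (The paper phrases this via restriction to the subquiver on the support of $\beta_i$, but your computation on the full $\Delta[k]$ works just as well and is arguably cleaner.)

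The forward direction differs. You match the $\sigma$-polystable decomposition with the canonical decomposition via Krull--Schmidt, after observing that $\sigma$-stable summands are indecomposable. The paper instead gives a two-line argument: $\sigma$-polystability implies $\sigma$-semistability, so each canonical summand (being a subrepresentation of a generic $V$) satisfies $\sigma(\beta_i)\le 0$; since $\sum_i\sigma(\beta_i)=\sigma(\alpha)=0$, equality holds for every $i$. Your route is valid but heavier: it invokes uniqueness of indecomposable decompositions, while the paper only needs the inequality from King's criterion and the arithmetic constraint $\sum_i\beta_i=\alpha$.
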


\begin{proof}
Suppose $\alpha$ is $\sigma$-polystable. Then, clearly, we must have $\sigma \cdot \beta_i \leq 0$ for all $i$ since $\alpha$ is $\sigma$-semistable. But then, we also have $\sigma \cdot (\sum_i \beta_i) = \sigma \cdot \alpha = 0$. Thus, we must have $\sigma \cdot \beta_i = 0$ for all $i$.

Conversely, suppose $\sigma \cdot \beta_i = 0$ for all $i$. Fix $j$. Since $\sigma(x) < 0$ and $\sigma(y_i) > 0$ for all $i$ and $\beta_j \neq 0$, the support of $\beta_j$ must be of the form $S = \{x,y_{j_1},\dots,y_{ij_k}\}$ for some $\emptyset \neq J = \{j_1,\dots,j_k\} \subseteq \{1,2,\dots,k\}$. Consider $\Delta[J]$ the (full) subquiver of $\Delta[k]$ whose vertices are $\{x\} \cup \{y_j\ |\ j \in J\}$. Then, we can interpret $\beta_j$ as a dimension vector of $\Delta[J]$. Since $\beta_j$ is a Schur root (of $\Delta[k]$ and hence of $\Delta[J]$), it is $\mu$-stable for its Schofield weight $\mu$ by Theorem~\ref{thm:schofield}. But one easily checks that $\sigma \cdot \beta_j = 0$ means that $\mu$ is (up to a positive multiple) equal to $\sigma$ restricted to $\Delta[J]$, so $\beta_j$ is $\sigma$-stable. Since $\beta_j$ is $\sigma$-stable for all $j$, we conclude that $\alpha$ is $\sigma$-polystable.
\end{proof}

Now, we can prove Proposition~\ref{prop:sigma-ss-ps}.

\begin{proof} [Proof of Proposition~\ref{prop:sigma-ss-ps}]
Clearly if $\alpha$ is $\sigma$-polystable, then $\alpha$ is $\sigma$-semistable. Conversely, suppose $\alpha$ is not $\sigma$-polystable. Let $\alpha = \beta_1 \oplus \beta_2 \oplus \dots \oplus \beta_l$ be the canonical decomposition. By Lemma~\ref{lem:sigma-ps-can}, we see that $\sigma \cdot \beta_i \neq 0$ for some $i$. Now either $\sigma \cdot \beta_i > 0$ or $\sigma \cdot (\sum_{ j \neq i} \beta_j) > 0$. In either case, this means that a generic $V \in \Rep(Q,\alpha)$ has a direct summand $W$(and hence subrepresentation) such that $\sigma \cdot \dim(W) > 0$, so $V$ is not $\sigma$-semistable.
\end{proof}
The above results raise an interesting question in quiver representations:

\begin{problem}
Does there exist a quiver $Q$ and a dimension vector $\alpha$ such that $\alpha$ is Schofield semistable, but not Schofield polystable? If so, construct an example.
\end{problem}

\subsection{Opposite quivers}
There are some operations we can perform on quivers while preserving information about their generic stability/semistability properties in order to extend the number of cases we can handle. One such operation is as follows. Let $Q = (Q_0,Q_1)$ be a quiver, and let $Q^{\opp}$ denote the \emph{opposite quiver}, where $Q^{\opp}_0 = Q_0$ and $Q^{\opp}_1 = \{a^* \ |\ a \in Q_1\}$ where $ta^* = ha$ and $ha^* = ta$. In other words, $Q^{\opp}$ is obtained from $Q$ by simply reversing the orientations of all arrows. Given a representation $V$ of $Q$, we define a representation $V^{\opp}$ of $Q^{\opp}$ as follows. For each $x \in Q_0 = Q^{\opp}_0$, set $V^{\opp}_x = V_x^*$ (the dual of $V_x$) and for each $a^* \in Q^{\opp}_1$, let $V(a^*): V(ta^*) = V(ha)^* \rightarrow V(ta)^* = V(ha^*)$ be the dual of the linear transformation $V(a): V(ta) \rightarrow V(ha)$. It is easy to check that the category of representations of $Q^{\opp}$ is the equivalent to the dual (or opposite) category to the category of representations of $Q$ and this anti-equivalence is given by the contravariant functor $V \mapsto V^{\opp}$.

\begin{lemma} \label{lem:ref-sigma-minus}
Let $V$ be a representation of a quiver $Q$. Let $\sigma \in \Z^{Q_0}$ be a weight. Then $V$ is $\sigma$-semistable/polystable/stable if and only if $V^{\opp}$ is $(-\sigma)$-semistable/polystable/stable. 
\end{lemma}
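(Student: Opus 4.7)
The plan is to derive the lemma directly from King's numerical criterion (Definition~\ref{crit-king}) by exploiting the fact that the functor $V \mapsto V^{\opp}$ is an anti-equivalence between the categories of representations of $Q$ and of $Q^{\opp}$. Under this anti-equivalence, \emph{subrepresentations} of $V$ correspond bijectively to \emph{quotient representations} of $V^{\opp}$ (and vice versa). Since duals preserve dimensions at every vertex, we have $\underline{\dim}(V^{\opp}) = \underline{\dim}(V)$, and more generally if $U \subseteq V$ is a subrepresentation with $\underline{\dim}(U) = \beta$, then the corresponding quotient $V^{\opp} \twoheadrightarrow U^{\opp}$ has kernel of dimension $\underline{\dim}(V) - \beta$, so subrepresentations of $V^{\opp}$ of dimension $\gamma$ correspond to subrepresentations of $V$ of dimension $\underline{\dim}(V) - \gamma$.

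For the semistable case, I would proceed as follows. The normalization condition $\sigma \cdot \underline{\dim}(V) = 0$ is equivalent to $(-\sigma) \cdot \underline{\dim}(V^{\opp}) = 0$. By the correspondence above, $V^{\opp}$ contains a subrepresentation of dimension $\gamma$ if and only if $V$ contains a subrepresentation of dimension $\underline{\dim}(V) - \gamma$. Hence the $(-\sigma)$-semistability of $V^{\opp}$ reads: for every subrepresentation $U \subseteq V$,
\[
(-\sigma) \cdot \bigl(\underline{\dim}(V) - \underline{\dim}(U)\bigr) \leq 0,
\]
which, using $\sigma \cdot \underline{\dim}(V) = 0$, is equivalent to $\sigma \cdot \underline{\dim}(U) \leq 0$. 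This is exactly $\sigma$-semistability of $V$. The stable case is identical with strict inequalities, and one simply checks that the two excluded dimension vectors ($0$ and $\underline{\dim}(V)$) correspond to each other under $\gamma \leftrightarrow \underline{\dim}(V) - \gamma$.

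For polystability I would invoke that the opposite functor commutes with direct sums, i.e.\ $(V_1 \oplus V_2)^{\opp} = V_1^{\opp} \oplus V_2^{\opp}$ canonically. Combined with the stability case already established, $V = \bigoplus_i V_i$ with each $V_i$ being $\sigma$-stable if and only if $V^{\opp} = \bigoplus_i V_i^{\opp}$ with each $V_i^{\opp}$ being $(-\sigma)$-stable. So $\sigma$-polystability of $V$ is equivalent to $(-\sigma)$-polystability of $V^{\opp}$.

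There is no serious obstacle here: the lemma is essentially a bookkeeping exercise once one accepts the anti-equivalence of categories. The only point requiring care is to confirm that, under duality, a short exact sequence $0 \to U \to V \to V/U \to 0$ of $Q$-representations becomes a short exact sequence $0 \to (V/U)^{\opp} \to V^{\opp} \to U^{\opp} \to 0$ of $Q^{\opp}$-representations, so that dimension vectors of subrepresentations and quotients really do swap as claimed; this follows from exactness of the $K$-linear dual on finite-dimensional vector spaces applied vertex-by-vertex.
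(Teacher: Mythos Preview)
Your proof is correct and follows essentially the same approach as the paper: both use the anti-equivalence $V \mapsto V^{\opp}$ to turn subrepresentations into quotients (and vice versa) and then apply King's numerical criterion from Definition~\ref{crit-king}. The paper's proof is a two-sentence sketch of exactly this argument, whereas you have spelled out the dimension-vector bookkeeping and the direct-sum compatibility for polystability in full.
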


\begin{proof}
The functor $V \mapsto V^{\opp}$ is contravariant and an anti-equivalence, so it turns subrepresentations into quotients and vice versa (while preserving the dimension vectors). Using this and Definition~\ref{crit-king}, the required result follows easily.
\end{proof}

For a representation $V$ of $Q$, identify $V_x$ with $\C^{\dim(V_x)}$ for all $x \in Q_0$ by choosing bases. By giving the standard inner product to each $\C^{\dim(V_x)}$, we can identify its dual again with $\C^{\dim(V_x)}$. With these identifications, the matrix representing $V(a^*)$ is the conjugate transpose of the matrix representing $V(a)$.
To summarize, we have a bijective map
\begin{align*}
\eta: \Rep(Q,\alpha) & \rightarrow  \Rep(Q^{\opp}, \alpha) \\
(V(a))_{a \in Q_1}  & \mapsto  (V(a)^{\dag})_{a^* \in Q^{\opp}_1}
\end{align*}

\begin{lemma} \label{lem:stability-opposite-negative}
Let $Q$ be a quiver and $\alpha \in \Z^{Q_0}$ a dimension vector and $\sigma \in \Z^{Q_0}$ a weight. Then $\alpha$ is $\sigma$-semistable/polystable/stable for $Q$ if and only if $\alpha$ is $(-\sigma)$-semistable/polystable/stable for the quiver $Q^{\opp}$.
\end{lemma}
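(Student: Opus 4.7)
The plan is to derive the generic statement from the pointwise equivalence in Lemma~\ref{lem:ref-sigma-minus} via the natural bijection between the representation varieties $\Rep(Q,\alpha)$ and $\Rep(Q^{\opp},\alpha)$. For a map that is algebraic over $K=\C$, I would use the ordinary transpose $\widetilde\eta: (V(a))_{a\in Q_1} \mapsto (V(a)^T)_{a^*\in Q^{\opp}_1}$ in place of the conjugate-transpose map $\eta$ of the preceding paragraph. The map $\widetilde\eta$ is a $K$-algebraic isomorphism of varieties whose inverse has the same form after swapping $Q$ and $Q^{\opp}$.

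First I would check that $\widetilde\eta(V)$, regarded as a representation of $Q^{\opp}$ via the identification $K^{\alpha(x)} \cong (K^{\alpha(x)})^*$ given by the standard basis, is isomorphic to $V^{\opp}$. This is the standard fact that in dual bases the matrix of the dual of a linear map is its transpose. Since $\sigma$-semistability, polystability, and stability in Definition~\ref{crit-king} depend only on the isomorphism class of a representation (they are defined via dimensions of subrepresentations), Lemma~\ref{lem:ref-sigma-minus} yields that for every $V \in \Rep(Q,\alpha)$,
\[
V \text{ is $\sigma$-semistable/polystable/stable} \quad\Longleftrightarrow\quad \widetilde\eta(V) \text{ is $(-\sigma)$-semistable/polystable/stable}.
\]

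To conclude, I would promote this pointwise equivalence to the generic one. Since $\widetilde\eta$ is an isomorphism of the irreducible $K$-varieties $\Rep(Q,\alpha)$ and $\Rep(Q^{\opp},\alpha)$, a constructible subset on one side is Zariski dense if and only if its image is Zariski dense on the other. The locus of $\sigma$-semistable (resp.\ $\sigma$-stable) representations is Zariski open, so density of the locus on one side corresponds to density of the $(-\sigma)$-locus on the other. For polystability, I would additionally observe that $\widetilde\eta$ intertwines the $\GL(\alpha)$-action on the two sides up to the group automorphism $g \mapsto (g^{-1})^T$ of $\GL(\alpha)$, so orbits and orbit closures correspond under $\widetilde\eta$; hence genericity of polystability is likewise preserved. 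The main---and very mild---obstacle is the bookkeeping required to verify that the algebraic map $\widetilde\eta$ really realizes the abstract opposite-representation functor, after which all three equivalences follow immediately from Lemma~\ref{lem:ref-sigma-minus}.
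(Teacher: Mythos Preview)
Your proposal is correct and follows essentially the same approach as the paper: both use a bijection between $\Rep(Q,\alpha)$ and $\Rep(Q^{\opp},\alpha)$ realizing the opposite-representation functor, invoke Lemma~\ref{lem:ref-sigma-minus} pointwise, and then pass to the generic statement. Your choice of the ordinary transpose $\widetilde\eta$ in place of the paper's conjugate-transpose $\eta$ is a mild improvement, since $\widetilde\eta$ is a $\C$-algebraic isomorphism and so the transfer of Zariski-generic properties is immediate; the paper's $\eta$ is only real-algebraic, but this still suffices for the same reason.
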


\begin{proof}
This follows from the fact that the map $\eta$ defined above is bijective and converts $\sigma$-semistability/polystability/stability to $(-\sigma)$-semistability/polystability/stability by Lemma~\ref{lem:ref-sigma-minus}.
\end{proof}

\begin{remark} \label{rmk:schofieldweight-opposite}
Note that given a dimension vector $\alpha$ of a quiver $Q$, the Schofield weight of $\alpha$ for $Q^{\opp}$ is the negative of the Schofield weight of $\alpha$ for $Q$.
\end{remark}

Recall the star quiver $\Delta[k]$. Its opposite $\Delta[k]^{\opp}$ is in the figure below:
\begin{equation}\begin{tikzcd}
& x & &  \\
y_1 \arrow[ur, <-, "a_1^*"] &  y_2\arrow[u, <-,"a_2^*"] &  \dots & y_n \arrow[ull, <-,"a_k^*" above]
%\& \mu(X)_{2} \& \&
\end{tikzcd}\label{eq:star-quiver-opp}\end{equation}

The transition from a quiver to its opposite is not very powerful in itself and seems like an exercise in linear algebra about dualizing vector spaces and linear maps. However, there are other more interesting ways to go from $\Delta[k]$ to $\Delta[k]^{\opp}$ called reflection functors. The interplay of the above duality with reflection functors helps us completely resolve generic existence and uniqueness of iPCA in the case where the legs are equal, i.e., $q_1 = q_2 = \dots = q_k$. Now, we turn to discussing reflection functors.

\subsection{Reflection functors}
Reflection functors are widely used in the theory of quiver representations and were discovered by Bernstein, Gelfand and Ponomarev. Despite their simplicity, reflection functors seem to be surprisingly powerful in numerous contexts. We recall only what we need from reflection functors and refer the interested reader to \cite{DW-book} for a more comprehensive treatment.

Given a quiver $Q = (Q_0,Q_1)$ and a vertex $x \in Q_0$, we define \emph{the quiver reflected at $x$}, denoted $r_x(Q)$, whose vertex set is $Q_0$ and set of arrows is 
$$
(r_x(Q))_1 = \{a \in Q_1\ |\ ha \neq x, ta \neq x\} \cup \{a^*\ |\ a \in Q_1, ha = x \text{ or } ta = x\},
$$
and $ha^* = ta$ and $ta^* = ha$. In other words, one simply reverses all arrows which ``touch" the vertex $x$. 

\subsubsection{Reflection at a sink}Now, suppose $x$ is a sink of the quiver $Q$ (i.e., there are no arrows such that $ta = x$, such as the vertex we have labeled $x$ in the star quiver $\Delta[k]$) and let $a_1,\dots,a_l$ be the arrows in $Q$ such that $ha_i = x$, and let $y_i = ta_i$. Suppose $V$ is a representation of $Q$. Then we have a map 
$$
\phi(x) = (V(a_1),V(a_2), \dots, V(a_l))^\top: \bigoplus_{i=1}^l V(y_i) \rightarrow V(x).
$$

Thus we have the inclusion $\iota_x: {\rm ker}(\phi_x) \hookrightarrow  \bigoplus_{i=1}^l V(y_i)$. For $j \in \{1,2,\dots,l\}$, let $\pi_j:  \bigoplus_{i=1}^l V(y_i) \rightarrow V_{y_j}$ denote the projection to the $j^{th}$ summand. Then $\pi_j \circ \iota_x : {\rm ker}(\phi_x) \rightarrow V(y_j)$.

We now define a representation $W = R_x^+(V)$ of $r_x(Q)$ as follows: For $y \neq x$, set $W(y) = V(y)$ and set $W(x) = {\rm ker}(\phi_x)$. For $b \in Q_1 \setminus \{a_1,\dots,a_k\}$, set $W(b) = V(b)$ and for $j \in \{1,2,\dots,l\}$, we define $W(a_j^*) = \pi_j \circ \iota_x$.

\subsubsection{Reflection at a source} If $x$ is a source of the quiver $Q$ (i.e., there are no arrows such that $ha = x$) and let $a_1,\dots,a_l$ be the arrows in $Q$ such that $ta_i = x$, and let $y_i = ha_i$. Suppose $V$ is a representation of $Q$. Then we have a map 
$$
\psi(x) = (V(a_1),V(a_2), \dots, V(a_l)): V(x) \rightarrow \bigoplus_{i=1}^l V(y_i)
$$

Thus, we have a surjection $\eta_x :  \bigoplus_{i=1}^l V(y_i) \rightarrow {\rm coker}(\psi_x)$. For $j \in \{1,2,\dots,l\}$, let $\zeta_j : V(y_j) \hookrightarrow \bigoplus_{i=1}^l V(y_i)$ denote the inclusion of the $j^{th}$ summand. Then $\eta_x \circ \zeta_j : V(y_j) \rightarrow {\rm coker}(\psi_x)$.

We now define a representation $W = R_x^-(V)$ of $r_x(Q)$ as follows: For $y \neq x$, set $W(y) = V(y)$ and set $W(x) = {\rm coker}(\psi_x)$. For $b \in Q_1 \setminus \{a_1,\dots,a_k\}$, set $W(b) = V(b)$ and for $j \in \{1,2,\dots,l\}$, we define $W(a_j^*) = \eta_x \circ \zeta_j$.

\subsubsection{Reflection functors and canonical decomposition}
It can be checked that $R_x^+$ and $R_x^-$ (when defined) are functors from the category of representations of $Q$ to the category of representations of $r_x(Q)$, the quiver $Q$ reflected at $x$. We will recall the important results.

For $x \in Q_0$, let $S_x$ denote the representation that is defined as $(S_x)_x = K$ and $(S_x)_y = 0$ for all $x \neq y \in Q_0$. We need not describe the maps between vertices, because all of them must be $0$.

\begin{lemma}
Let $Q$ be a quiver and let $x \in Q_0$ be a sink. Let $V$ be an indecomposable representation of $Q$ which is not isomorphic to $S_x$. Then $R_x^+(Q)$ is an indecomposable representation of $r_x(Q)$ and moreover $R_x^-(R_x^+(V)) = V$.
\end{lemma}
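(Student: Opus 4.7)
The plan is to reduce both assertions to the observation that the structure map
$$\phi_x := (V(a_1), \ldots, V(a_l))^\top \colon \bigoplus_{i=1}^l V(y_i) \to V(x)$$
is surjective whenever $V$ is indecomposable and $V \not\cong S_x$. Granting this, $\iota_x \colon \ker(\phi_x) \hookrightarrow \bigoplus_i V(y_i)$ is the kernel of a surjection, so $\phi_x$ descends to a canonical isomorphism $\coker(\iota_x) \cong V(x)$. Under this identification, the structure maps of $R_x^-(R_x^+(V))$ at the reflected arrows, namely $\eta_x \circ \zeta_j$, correspond to $\phi_x \circ \zeta_j = V(a_j)$, and the remaining structure (at vertices and arrows not incident to $x$) is untouched by either functor. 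Hence $R_x^-(R_x^+(V)) \cong V$.

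To prove the surjectivity claim, suppose for contradiction that $\mathrm{im}(\phi_x)$ is a proper subspace of $V(x)$, and choose a complement $C$. Define $V'$ by $V'(y) = V(y)$ for $y \neq x$ and $V'(x) = \mathrm{im}(\phi_x)$, with each $V(a_i)$ corestricted to $\mathrm{im}(\phi_x)$ and all other structure inherited from $V$; let $V''$ be the representation supported only at $x$ with value $C$ and zero structure maps. Then $V = V' \oplus V''$, so by indecomposability either $V'' = 0$ or $V' = 0$. The former case forces $C = 0$, contradicting non-surjectivity. The latter case forces $V(y) = 0$ for all $y \neq x$ and $\mathrm{im}(\phi_x) = 0$, so $V$ is concentrated at $x$ with zero structure maps; then $V \cong S_x^{\dim V(x)}$, and indecomposability yields $V \cong S_x$, again a contradiction.

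For indecomposability of $W := R_x^+(V)$, I would first observe that $W$ has no $S_x$ summand in $r_x(Q)$: such a summand would be a nonzero subspace $U \subseteq W(x) = \ker(\phi_x)$ annihilated by every $W(a_j^*) = \pi_j \circ \iota_x$, but $\iota_x$ is injective so $U$ must be zero. Now suppose $W = W_1 \oplus W_2$; applying $R_x^-$ and using additivity together with the first paragraph yields $V \cong R_x^-(W_1) \oplus R_x^-(W_2)$, so by indecomposability of $V$ some $R_x^-(W_i)$ vanishes. Unwinding, $R_x^-(W_i) = 0$ forces $W_i(y) = 0$ for every $y \neq x$, whence $W_i$ is concentrated at $x$ with all zero structure maps, i.e.\ $W_i \cong S_x^{\dim W_i(x)}$; since $W$ has no $S_x$ summand, $W_i = 0$. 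Hence $W$ is indecomposable. The main bookkeeping subtlety will be, in the first paragraph, verifying that the canonical cokernel identification intertwines all the incident structure maps — essentially a diagram chase through the definitions of $R_x^+$ and $R_x^-$.
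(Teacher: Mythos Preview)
Your proof is correct. The paper does not actually supply a proof of this lemma: it is stated as a standard fact about reflection functors, with a reference to the textbook \cite{DW-book} for details (the surrounding text says ``We recall only what we need from reflection functors and refer the interested reader to \cite{DW-book} for a more comprehensive treatment''). Your argument is the classical Bernstein--Gelfand--Ponomarev one: surjectivity of $\phi_x$ from indecomposability, the resulting identification $\coker(\iota_x)\cong V(x)$ giving $R_x^- R_x^+ V \cong V$, the observation that $\psi_x$ for $R_x^+V$ is precisely $\iota_x$ (hence injective, so no $S_x$ summand), and then additivity of $R_x^-$ to deduce indecomposability. All steps check out; the only cosmetic point is that the lemma writes ``$=$'' where your argument produces a natural isomorphism, which is the intended meaning.
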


Let $Q$ be a quiver and $x \in Q_0$ be a sink. Let $\mathcal{C}_{Q,x}$ denote the subcategory of representation of the quiver $Q$ which do not have a direct summand isomorphic to $S_x$. Similarly, let $\mathcal{C}_{r_x(Q),x}$ denote the subcategory of representations of $r_x(Q)$ which do not have a direct summand isomorphic to $S_x$. Then, we have an equivalence of categories

$$ \begin{tikzcd}[ampersand replacement = \&] \mathcal{C}_{Q,x}  \arrow[bend left]{r}{R_x^+}   \& \arrow[bend left]{l}[pos=0.4]{R_x^-} \mathcal{C}_{r_x(Q),x} 
\end{tikzcd}$$

given by $R_x^+$ and $R_x^-$. Moreover, for $V \in \mathcal{C}_{Q,x}$ of dimension $\alpha$, we observe that the dimension of $R_x^+(V)$ is $r_x(\alpha)$. For a vertex $x$ in a quiver $Q$, let $\epsilon_x$ denote the dimension vector which is $1$ at $x$ and $0$ elsewhere. 
%\VM{Explain somewhere at the beginning that modulo these $S_x$'s, reflection functors transfer information perfectly to the reflected quiver}

%\VM{Need to state that $\alpha$ is Schur root $\Leftrightarrow$ generic endomorphism ring of $\alpha$-dimensional representation is trivial $\Leftrightarrow$ there exists some $\alpha$-dimensional representation with a trivial endomorphism ring. Probably can find it somewhere in Kac.}

\begin{lemma} \label{lem:reflection-schur}
Let $Q$ be a quiver and let $x \in Q_0$ be a sink. Let $\alpha \neq \epsilon_x$ be a Schur root for $Q$. Then $r_x(\alpha)$ is a Schur root for $r_x(Q)$.
\end{lemma}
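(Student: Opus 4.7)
The plan is to exploit the equivalence of categories $R_x^+:\mathcal{C}_{Q,x}\rightarrow\mathcal{C}_{r_x(Q),x}$ stated just above the lemma, using the characterization of Schur roots from Definition~\ref{def:schur-root} in terms of endomorphism rings. The main observation is that any equivalence of additive categories preserves endomorphism rings as $\mathbb{C}$-algebras, so applying $R_x^+$ to a representation with $\End_Q(V)=\mathbb{C}$ should produce a representation of $r_x(Q)$ with the same property, provided the hypotheses on $V$ are met.

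Concretely, I would proceed as follows. First, pick a generic $V\in\Rep(Q,\alpha)$. Because $\alpha$ is a Schur root, Definition~\ref{def:schur-root} guarantees that $\End_Q(V)=\mathbb{C}$ and that $V$ is indecomposable. Second, I check that $V\in\mathcal{C}_{Q,x}$, i.e.\ that $V$ has no direct summand isomorphic to $S_x$: since $V$ is indecomposable, the only way it could have an $S_x$ summand is if $V\cong S_x$ itself, which is ruled out by $\alpha\neq\epsilon_x$. Third, set $W:=R_x^+(V)$; by the observation recorded before the lemma, $\underline{\dim}\,W=r_x(\alpha)$, so in particular $r_x(\alpha)$ is indeed a valid (nonnegative) dimension vector for $r_x(Q)$. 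Fourth, applying the functor $R_x^+$ to morphisms gives a $\mathbb{C}$-algebra isomorphism
\[
\End_{r_x(Q)}(W)\;=\;\End_{r_x(Q)}(R_x^+(V))\;\cong\;\End_Q(V)\;=\;\mathbb{C}.
\]
Finally, the existence of \emph{some} representation of dimension $r_x(\alpha)$ whose endomorphism ring is $\mathbb{C}$ is, by the equivalence of conditions in Definition~\ref{def:schur-root}, exactly what is needed to conclude that $r_x(\alpha)$ is a Schur root for $r_x(Q)$.

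The only real subtlety is verifying that our generic $V$ lies in $\mathcal{C}_{Q,x}$, but this reduces immediately to the hypothesis $\alpha\neq\epsilon_x$ together with the indecomposability built into the Schur root condition. Everything else is bookkeeping with the equivalence of categories $R_x^+$ and the equivalent formulations of being a Schur root. I do not anticipate any genuine technical obstacle; the work of the lemma is really being done by the reflection functor machinery recalled in the preceding paragraphs.
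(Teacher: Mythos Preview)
Your proposal is correct and follows essentially the same line as the paper's proof: choose $V$ of dimension $\alpha$ with $\End_Q(V)=\C$, observe that indecomposability together with $\alpha\neq\epsilon_x$ forces $V\in\mathcal{C}_{Q,x}$, and then use that $R_x^+$ is an equivalence to transport the endomorphism ring to $R_x^+(V)$. If anything, you are slightly more explicit than the paper in invoking the hypothesis $\alpha\neq\epsilon_x$ to rule out $V\cong S_x$.
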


\begin{proof}
Let $V$ be an indecomposable representation of dimension $\alpha$ such that ${\rm End}_{Q}(V) = \C$. Such a $V$ exists because $\alpha$ is a Schur root for $Q$, see Definition~\ref{def:schur-root}. Since $V$ is indecomposable, it does not have a direct summand isomorphic to $S_x$, so $V \in \mathcal{C}_{Q,x}$. Hence $R_x^+(V)$ is a representation of $r_x(Q)$ of dimension $r_x(\alpha)$ with a whose endomorphism ring is $\C$, so we conclude that $r_x(\alpha)$ is a Schur root for $r_x(Q)$.
\end{proof}

%\VM{Need Kac's characterization of canonical decomposition in terms of Schur roots and vanishing of extensions.}

\begin{proposition} \label{prop:can-dec-reflect}
Let $Q$ be a quiver and let $x \in Q_0$ be a sink. Let $\alpha$ be a dimension vector of $Q$. Suppose its canonical decomposition is $\alpha = \beta_1 \oplus \beta_2 \oplus \dots \oplus \beta_l$ (as a dimension vector of $Q$) such that $\beta_i \neq \epsilon_x$ for all $i$. Then
$$
r_x(\alpha) = r_x(\beta_1) \oplus r_x(\beta_2) \oplus \dots \oplus r_x(\beta_l)
$$
is the canonical decomposition of $r_x(\alpha)$ as a dimension vector of $r_x(Q)$.
\end{proposition}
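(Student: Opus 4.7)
The plan is to verify Kac's criterion (Theorem~\ref{thm:kac-ext}) for the proposed decomposition of $r_x(\alpha)$, relying on the classical fact that the reflection functor $R_x^+$ preserves both ${\rm Hom}$ and $\Ext$ between any two objects of $\mathcal{C}_{Q,x}$ (see \cite{DW-book}).

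First, by Theorem~\ref{thm:kac-ext} applied to the canonical decomposition of $\alpha$ in $Q$, each $\beta_i$ is a Schur root of $Q$. Combined with the hypothesis $\beta_i \neq \epsilon_x$, Lemma~\ref{lem:reflection-schur} yields that each $r_x(\beta_i)$ is a Schur root for $r_x(Q)$, establishing the first condition of Kac's criterion in $r_x(Q)$.

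Second, I would show $\ext_{r_x(Q)}(r_x(\beta_i), r_x(\beta_j)) = 0$ for all $i \neq j$. By Remark~\ref{rmk:ext-gen-min}, it suffices to exhibit a single pair of representations $W_i \in \Rep(r_x(Q), r_x(\beta_i))$ and $W_j \in \Rep(r_x(Q), r_x(\beta_j))$ with $\Ext_{r_x(Q)}(W_i, W_j) = 0$. Since each $\beta_i$ is a Schur root, a generic $V_i \in \Rep(Q, \beta_i)$ is indecomposable with $\End(V_i) = \C$; because $\beta_i \neq \epsilon_x$, such a $V_i$ is not isomorphic to $S_x$ and so lies in $\mathcal{C}_{Q,x}$. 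This is an open dense condition, and intersecting it with the further open condition $\dim \Ext_Q(V_i, V_j) = 0$ (achievable since $\ext_Q(\beta_i, \beta_j) = 0$ and by Remark~\ref{rmk:ext-gen-min} this is the minimum), I can choose each $V_i$ in $\mathcal{C}_{Q,x} \cap \Rep(Q, \beta_i)$ such that $\Ext_Q(V_i, V_j) = 0$ for all $j \neq i$. Setting $W_i := R_x^+(V_i)$, which has dimension $r_x(\beta_i)$, the $\Ext$-preservation property then gives $\Ext_{r_x(Q)}(W_i, W_j) \cong \Ext_Q(V_i, V_j) = 0$, as desired. Kac's criterion now yields the claimed canonical decomposition of $r_x(\alpha)$.

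The main obstacle I expect is justifying the $\Ext$-preservation under $R_x^+$: since $\mathcal{C}_{Q,x}$ is not an abelian subcategory of the category of representations of $Q$, preservation of $\Ext$ does not follow formally from the additive equivalence provided by $R_x^+$ and $R_x^-$. The standard remedy, carried out in \cite{DW-book}, is to lift $R_x^+$ to an equivalence of (appropriate subcategories of) the derived categories of $Q$ and $r_x(Q)$; alternatively, one can argue via invariance of the Euler form under the Weyl-group-style reflection $r_x$ together with $R_x^+$-preservation of ${\rm Hom}$.
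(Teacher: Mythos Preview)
Your proposal is correct and follows essentially the same route as the paper: verify Kac's criterion by using Lemma~\ref{lem:reflection-schur} for the Schur-root condition, pick generic indecomposable representatives $V_i\in\mathcal{C}_{Q,x}$ with $\Ext_Q(V_i,V_j)=0$, and push the $\Ext$-vanishing through $R_x^+$ to conclude via Remark~\ref{rmk:ext-gen-min}. You are in fact more explicit than the paper about the one subtle step---why the additive equivalence $R_x^+:\mathcal{C}_{Q,x}\to\mathcal{C}_{r_x(Q),x}$ preserves $\Ext^1$ computed in the ambient module categories---whereas the paper simply invokes the equivalence without further comment; your suggested remedies (derived equivalence, or invariance of the Euler form together with $\mathrm{Hom}$-preservation) are standard and either suffices.
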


\begin{proof}
First, we observe that $r_x(\beta_i)$ are all Schur roots for $r_x(Q)$ by Lemma~\ref{lem:reflection-schur}. It suffices to show that $\ext_{r_x(Q)}(r_x(\beta_i),r_x(\beta_j)) = 0$ for all $i\neq j$ because then we get the required conclusion by Theorem~\ref{thm:kac-ext}.  Fix $i$ and $j$. Since $\alpha = \beta_1 \oplus \beta_2 \oplus \dots \oplus \beta_l$ is the canonical decomposition for $Q$, we get that $\ext_Q(\beta_i,\beta_j) = 0$. In particular, this means that for generic representations $V_i$ and $V_j$ of dimension $\beta_i$ and $\beta_j$ such that $\Ext_Q(V_i,V_j) = 0$. Since $\beta_i$ and $\beta_j$ are Schur roots, we can assume $V_i$ and $V_j$ are indecomposable and in particular this implies that $V_i,V_j \in \mathcal{C}_{Q,x}$. Thus, since $R_x^+$ gives an equivalence of categories from $C_{Q,x}$ to $C_{r_x(Q),x}$, we get that $\Ext_{r_x(Q)}(r_x(V_i), r_x(V_j)) = 0$, so we conclude that $\ext_{r_x(Q)}(r_x(\beta_i), r_x(\beta_j)) = 0$ (see Remark~\ref{rmk:ext-gen-min}). 
%\VM{Check that Exts are transferred through reflection functors. I think it's automatic, but check anyway to be completely sure.}
\end{proof}

\begin{remark}
Analogous statements can be made for $R_x^-$ when $x$ is a source of the quiver $Q$. We will not write them out explicitly, but we will make use of them whenever needed.
\end{remark}

\subsubsection{Reflection functors for star quivers}

Let $\alpha = (p,q_1,\dots,q_k)$ be a dimension vector for $\Delta[k]$. Observe that $x$ is a sink. Now, observe that $r_x(\alpha) = (n-p, q_1,q_2,\dots,q_k)$. 

\begin{lemma} \label{lem:p<n-noSx}
Let $\alpha = (p,q_1,\dots,q_k)$ be a dimension vector for $\Delta[k]$ and suppose $p < n$. Then, a generic representation $V \in \Rep(\Delta[k],\alpha)$ does not contain $S_x$ as a direct summand.
\end{lemma}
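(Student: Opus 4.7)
The plan is to reduce the question to a simple statement in linear algebra. For a representation $V \in \Rep(\Delta[k], \alpha)$, consider the combined linear map at the sink
\[
\phi_V := (V(a_1), V(a_2), \dots, V(a_k)) : \bigoplus_{i=1}^k V(y_i) \longrightarrow V(x).
\]
I claim that $V$ contains $S_x$ as a direct summand if and only if $\phi_V$ fails to be surjective. This is standard, but let me sketch both directions. If $V \cong S_x^{\oplus m} \oplus W$ with $m \geq 1$, then $V(y_i) = W(y_i)$ for all $i$, and each $V(a_i)$ decomposes as $0 \oplus W(a_i)$, so its image lies in the codimension-$m$ subspace $W(x) \subset V(x)$. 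Hence the image of $\phi_V$ is contained in $W(x)$, and $\phi_V$ is not surjective. Conversely, if $\phi_V$ has image $W' \subsetneq V(x)$, pick any complement $V(x) = U \oplus W'$; then setting $W(x) = W'$, $W(y_i) = V(y_i)$, $W(a_i) = V(a_i)$ (with codomain restricted to $W'$) yields a subrepresentation with $V \cong S_x^{\oplus \dim U} \oplus W$.

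With this equivalence in hand, the lemma reduces to showing that $\phi_V$ is surjective for a generic $V \in \Rep(\Delta[k], \alpha)$. After identifying $V(x) = K^p$ and $V(y_i) = K^{q_i}$, the map $\phi_V$ is exactly the linear map given by the concatenated $p \times n$ matrix $[V(a_1) \mid V(a_2) \mid \dots \mid V(a_k)]$. Since the hypothesis $p < n$ gives $p \leq n$, full row rank is a non-empty Zariski-open condition on this space of matrices (it is cut out by non-vanishing of at least one $p \times p$ minor, and e.g. any matrix containing the $p \times p$ identity as a submatrix witnesses non-emptiness). Thus generically $\phi_V$ is surjective, and by the equivalence above $S_x$ is not a direct summand of $V$.

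The argument is essentially a two-line linear algebra observation, and there is no serious obstacle. The only content is the characterization of $S_x$-summands through surjectivity of the map into the sink, which is a standard feature of reflection at a sink, together with the trivial observation that a generic wide matrix is surjective.
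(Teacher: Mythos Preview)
Your proof is correct and is exactly the standard argument; the paper itself simply declares the lemma ``straightforward and left to the reader,'' so your write-up is presumably what the authors had in mind.
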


\begin{proof}
This is straightforward and left to the reader.
\end{proof}

\begin{lemma} \label{lem:schofieldweight-identify}
Let $\alpha = (p,q_1,\dots,q_k)$ be a dimension vector of $\Delta[k]$. Suppose $\sigma \in \Z^{Q_0}$ is such that 
\begin{itemize}
\item $\sigma \cdot \alpha = 0$;
\item $\sigma(x) < 0$;
\item $\sigma(y_1) = \sigma(y_2) = \dots = \sigma(y_k) > 0$.
\end{itemize}
Then up to a positive multiple $\sigma$ is the Schofield weight of $\alpha$.
\end{lemma}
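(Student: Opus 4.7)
The plan is to directly compute the Schofield weight $\tau$ of $\alpha$ from the definition and then observe that the three conditions on $\sigma$ pin it down to a positive scalar multiple of $\tau$.

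First I would write out the Euler form for $\Delta[k]$ explicitly. Since every arrow $a_i$ has $ta_i=y_i$ and $ha_i=x$, the Euler form is
\begin{equation*}
\langle \alpha,\beta\rangle \;=\; \alpha(x)\beta(x) + \sum_{i=1}^k \alpha(y_i)\beta(y_i) - \sum_{i=1}^k \alpha(y_i)\beta(x).
\end{equation*}
Plugging in $\alpha=(p,q_1,\dots,q_k)$ and using $n=\sum_i q_i$, the anti-symmetrization collapses to
\begin{equation*}
\tau(\beta) \;=\; \langle\alpha,\beta\rangle - \langle\beta,\alpha\rangle \;=\; -n\,\beta(x) + p\sum_{i=1}^k \beta(y_i),
\end{equation*}
so the Schofield weight is $\tau = (-n,\,p,\,p,\dots,\,p)$.

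Next I would compare $\sigma$ with $\tau$. Let $t := \sigma(y_1) = \cdots = \sigma(y_k) > 0$. The condition $\sigma\cdot\alpha=0$ reads $\sigma(x)\,p + t\,n = 0$, forcing $\sigma(x) = -tn/p$. Hence
\begin{equation*}
\sigma \;=\; (-tn/p,\,t,\dots,t) \;=\; \tfrac{t}{p}\,(-n,\,p,\dots,p) \;=\; \tfrac{t}{p}\,\tau,
\end{equation*}
and $t/p > 0$ since $t,p > 0$. This shows $\sigma$ is a positive (rational) multiple of $\tau$, which is exactly what the lemma claims.

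There is no real obstacle here; the entire lemma is a one-line consequence of computing $\langle\alpha,\cdot\rangle - \langle\cdot,\alpha\rangle$ and noting that the three hypotheses on $\sigma$ determine a one-parameter family of rays in $\R^{k+1}$ with direction vector $\tau$. The only thing to be mildly careful about is the meaning of ``positive multiple'' — the natural scalar $t/p$ is rational rather than a positive integer, but this suffices for all subsequent uses since $\sigma$-(semi/poly/)stability is invariant under positive rational rescaling of the weight (both sides of King's numerical criterion in Definition~\ref{crit-king} scale linearly).
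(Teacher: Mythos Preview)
Your proof is correct and is exactly the straightforward computation the paper has in mind; indeed the paper omits the proof entirely as ``straightforward and left to the reader.'' Your explicit evaluation of the Euler form and the observation that the three hypotheses force $\sigma = (t/p)\,\tau$ with $t/p>0$ is the intended argument.
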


\begin{proof}
This is straightforward and left to the reader.
\end{proof}

\begin{corollary} \label{cor:schofieldstable-reflection}
Let $\alpha = (p,q_1,\dots,q_k)$ be a dimension vector for $\Delta[k]$ such that $p < n$. Then $\alpha$ is Schofield semistable/polystable/stable for $\Delta[k]$ if and only if $r_x(\alpha)$ is Schofield semistable/polystable/stable for $\Delta[k]^{\opp}$.
\end{corollary}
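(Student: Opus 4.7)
The plan is to combine the polystability criterion in Lemma~\ref{lem:sigma-ps-can} with the good behavior of the canonical decomposition under reflection in Proposition~\ref{prop:can-dec-reflect}, plus a direct identification of Schofield weights on the two sides. First I would compute the Schofield weight of $r_x(\alpha) = (n-p, q_1, \dots, q_k)$ for $\Delta[k]^{\opp}$ directly from the Euler form, or equivalently via Lemma~\ref{lem:schofieldweight-identify} adapted to the opposite orientation; it turns out to be (a positive multiple of) $\sigma' = (n, -(n-p), \dots, -(n-p))$. For any $\beta = (a, b_1, \dots, b_k)$ with $B = \sum_i b_i$ one checks by direct calculation that
\[
\sigma \cdot \beta = -na + pB = n(B-a) - (n-p)B = \sigma' \cdot r_x(\beta),
\]
so the Schofield weight pairings on the two sides are literally equal after reflection.

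For the semistability/polystability part, I would argue as follows. By Proposition~\ref{prop:sigma-ss-ps}, Schofield semistability and polystability coincide for any star quiver (including $\Delta[k]^{\opp}$, since the proof of that proposition never used the orientation beyond the fact that $x$ is the unique special vertex; alternatively one could invoke Lemma~\ref{lem:stability-opposite-negative}). By Lemma~\ref{lem:sigma-ps-can}, $\alpha$ is Schofield polystable for $\Delta[k]$ iff $\sigma \cdot \beta_i = 0$ for every piece $\beta_i$ of the canonical decomposition $\alpha = \beta_1 \oplus \cdots \oplus \beta_l$. The hypothesis $p < n$ together with Lemma~\ref{lem:p<n-noSx} guarantees that no $\beta_i$ equals $\epsilon_x$, so Proposition~\ref{prop:can-dec-reflect} applies and gives $r_x(\alpha) = r_x(\beta_1) \oplus \cdots \oplus r_x(\beta_l)$ as the canonical decomposition of $r_x(\alpha)$ for $\Delta[k]^{\opp}$. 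Combining this with the identity $\sigma \cdot \beta_i = \sigma' \cdot r_x(\beta_i)$ and Lemma~\ref{lem:sigma-ps-can} applied on the opposite side concludes the semistable/polystable equivalence. (For the converse direction of the canonical decomposition step, one applies the source-analogue of Proposition~\ref{prop:can-dec-reflect} mentioned in the remark following it, using $n - p \geq 1$ to ensure $r_x(\alpha) \neq \epsilon_x$.)

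For the stability part, Theorem~\ref{thm:schofield} identifies Schofield stability with being a Schur root. Lemma~\ref{lem:reflection-schur} (and its source-analogue) then shows $\alpha$ is a Schur root for $\Delta[k]$ iff $r_x(\alpha)$ is a Schur root for $\Delta[k]^{\opp}$, the nondegeneracy conditions $\alpha \neq \epsilon_x$ and $r_x(\alpha) \neq \epsilon_x$ being automatic from $q_i \geq 1$ and $p < n$ respectively.

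The only real obstacle is bookkeeping: one must verify that reflecting $\Delta[k]$ at its unique sink $x$ reverses exactly the arrows incident to $x$, i.e., all arrows, producing $\Delta[k]^{\opp}$, and one must make sure that the ``analogous statement'' for reflection at a source is invoked in the correct direction when transporting canonical decompositions backwards. Neither step is conceptually difficult, so the corollary follows essentially by assembling the ingredients developed in this section.
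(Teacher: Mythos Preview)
Your proposal is correct and follows essentially the same approach as the paper: both use Proposition~\ref{prop:sigma-ss-ps} to reduce semistability to polystability, Lemma~\ref{lem:p<n-noSx} together with Proposition~\ref{prop:can-dec-reflect} to transport the canonical decomposition, Lemma~\ref{lem:sigma-ps-can} for the polystability criterion, and Theorem~\ref{thm:schofield} with Lemma~\ref{lem:reflection-schur} for stability. Your explicit identity $\sigma\cdot\beta = \sigma'\cdot r_x(\beta)$ is a mild streamlining of the paper's argument (which instead re-identifies Schofield weights via Lemma~\ref{lem:schofieldweight-identify} on each piece), but the substance is the same.
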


%Let $\sigma = (n,-p,-p,\dots,-p)$ and let $\widehat{\sigma} = (-n, n-p,n-p,\dots,n-p)$. 
% \CF{this sign convention disagrees with 2.7, right?} \VM{Yeah, I suppose I missed fixing it, fixed it now. Also fixed in Lemma above}

\begin{proof}
We make a few observations before we get into the proof. First, the Schofield weight for $\alpha$ as a representation of $\Delta[k]$ is $\sigma = (-n,p,p,\dots,p)$ and the Schofield weight for $r_x(\alpha) = (n-p, q_1,\dots,q_k)$ as a representation of $\Delta[k]^{\opp}$ is $\widehat{\sigma} = (n,p-n,p-n,\dots,p-n)$. Second, if $\alpha = \beta_1 \oplus \dots \oplus \beta_l$ is the canonical decomposition of $\alpha$ for $\Delta[k]$, then  $r_x(\alpha) = r_x(\beta_1) \oplus \dots \oplus r_x(\beta_l)$ is the canonical decomposition of $r_x(\alpha)$ for $\Delta[k]^{\rm opp}$, which follows from Lemma~\ref{lem:p<n-noSx} and Proposition~\ref{prop:can-dec-reflect}. Third, Schofield semistability is the same as Schofield polystability for $\Delta[k]$ by Proposition~\ref{prop:sigma-ss-ps}. Analogously, one can establish that Schofield semistability is the same as Schofield polystability for $\Delta[k]^{\opp}$; we leave the details to the reader. So, we only need to argue the cases of polystability and stability.

First, we tackle the case of stability. Suppose $\alpha$ is Schofield stable, i.e., $\sigma$-stable for $\Delta[k]$. Then since $\alpha$ is a Schur root by Theorem~\ref{thm:schofield}, and $\alpha \neq \epsilon_x$, we deduce that $r_x(\alpha)$ is a Schur root for $\Delta[k]^{\opp}$ by Lemma~\ref{lem:reflection-schur}. Thus $r_x(\alpha)$ is Schofield stable for $\Delta[k]^{\opp}$ by Theorem~\ref{thm:schofield}. Conversely, suppose $\alpha$ is not $\sigma$-stable for $\Delta[k]$, then it is not Schur, so its canonical decomposition contains at least two factors, none of which are $\epsilon_x$ by Lemma~\ref{lem:p<n-noSx}. But then by the argument above, the canonical decomposition for $r_x(\alpha)$ for $\Delta[k]^{\opp}$ also contains at least two factors, so $r_x(\alpha)$ is not a Schur root and hence not Schofield stable.

Next, we tackle the case of polystability. Suppose $\alpha$ is Schofield polystable, i.e., $\sigma$-polystable for $\Delta[k]$. Then the canonical decomposition $\alpha = \beta_1 \oplus \beta_2 \oplus \dots \oplus \beta_l$ is such that each $\beta_i$ is $\sigma$-stable and $\beta_i \neq \epsilon_x$. In particular, observe that this means $\sigma \cdot \beta_i = 0$ and this implies that the Schofield weight of $\beta_i$ for $Q$ is $\sigma$ up to a positive multiple by Lemma~\ref{lem:schofieldweight-identify}. One also verifies easily that $\sigma \cdot \beta_i = 0 \implies \widehat{\sigma} \cdot r_x(\beta_i) = 0$, which means that the Schofield weight of $r_x(\beta_i)$ for $r_x(Q)$ is $\widehat{\sigma}$ up to a positive multiple by the analog of Lemma~\ref{lem:schofieldweight-identify} for $\Delta[k]^{\opp}$. Since $r_x(\beta_i)$ is a Schur root, it is Schofield stable, i.e., $\widehat{\sigma}$-stable. Hence $r_x(\alpha)$ is $\widehat{\sigma}$-polystable, i.e., Schofield polystable.
\end{proof}

\begin{corollary} \label{cor:r_x-stability-preserve}
Let $\alpha = (p,q_1,\dots,q_k)$ be a dimension vector for $\Delta[k]$ such that $p < n$. Then $\alpha$ is Schofield semistable/polystable/stable for $\Delta[k]$ if and only if $r_x(\alpha) = (n-p,q_1,\dots,q_k)$ is Schofield semistable/polystable/stable for $\Delta[k]$.
\end{corollary}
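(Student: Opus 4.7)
The plan is to chain together two results already established in the paper and then verify a short sign/weight computation to see that the resulting weight is exactly the Schofield weight for the \emph{same} quiver (not the opposite). Concretely, Corollary~\ref{cor:schofieldstable-reflection} already shows that $\alpha$ is Schofield semistable/polystable/stable for $\Delta[k]$ if and only if $r_x(\alpha)$ is Schofield semistable/polystable/stable for $\Delta[k]^{\opp}$. So the only new task is to transfer Schofield stability of $r_x(\alpha)$ from $\Delta[k]^{\opp}$ back to $\Delta[k]$, and this is exactly the type of move handled by Lemma~\ref{lem:stability-opposite-negative}.

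The concrete steps I would carry out are as follows. First, from the proof of Corollary~\ref{cor:schofieldstable-reflection}, recall that the Schofield weight of $r_x(\alpha)=(n-p,q_1,\dots,q_k)$ as a dimension vector of $\Delta[k]^{\opp}$ is $\widehat{\sigma}=(n,p-n,\dots,p-n)$. Second, compute directly the Schofield weight of the same dimension vector $r_x(\alpha)$ regarded as a dimension vector of $\Delta[k]$: plugging into $\sigma'(\beta)=\langle r_x(\alpha),\beta\rangle-\langle\beta,r_x(\alpha)\rangle$ with arrows going $y_i\to x$ yields $\sigma'=(-n,n-p,\dots,n-p)$, which is precisely $-\widehat{\sigma}$. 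Third, invoke Lemma~\ref{lem:stability-opposite-negative} with $Q=\Delta[k]^{\opp}$ (so that $Q^{\opp}=\Delta[k]$) and weight $\widehat{\sigma}$: this gives that $r_x(\alpha)$ is $\widehat{\sigma}$-semistable/polystable/stable for $\Delta[k]^{\opp}$ if and only if $r_x(\alpha)$ is $-\widehat{\sigma}=\sigma'$-semistable/polystable/stable for $\Delta[k]$, i.e.\ if and only if $r_x(\alpha)$ is Schofield semistable/polystable/stable for $\Delta[k]$.

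Putting these pieces together gives the biconditional claimed in the corollary. There is no genuine obstacle — the content is entirely in Corollary~\ref{cor:schofieldstable-reflection} and Lemma~\ref{lem:stability-opposite-negative}; the only thing to be careful about is the sign bookkeeping for the Schofield weights, which is exactly the observation that reversing all arrows negates the Euler form's off-diagonal contribution, and hence negates the Schofield weight (as already noted in Remark~\ref{rmk:schofieldweight-opposite}). Once that identification $\sigma'=-\widehat{\sigma}$ is recorded, the proof is essentially a one-line composition of the two earlier results and does not require any further appeal to canonical decompositions or reflection functors.
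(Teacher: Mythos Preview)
Your proposal is correct and takes essentially the same approach as the paper: the paper's proof simply cites Corollary~\ref{cor:schofieldstable-reflection}, Lemma~\ref{lem:stability-opposite-negative}, and Remark~\ref{rmk:schofieldweight-opposite}, and you have spelled out exactly how these three pieces fit together, including the explicit sign check $\sigma'=-\widehat{\sigma}$ that Remark~\ref{rmk:schofieldweight-opposite} encapsulates.
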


\begin{proof}
This follows from Corollary~\ref{cor:schofieldstable-reflection}, Lemma~\ref{lem:stability-opposite-negative} and Remark~\ref{rmk:schofieldweight-opposite}.
\end{proof}

In the above, we focused on reflection at the sink $x$ in the quiver $\Delta[k]$. We can also look at reflections at the sources $y_i$, i.e., $R_{y_i}^-$. Applying $R_{y_i}^-$ reverses only the orientation of one arrow. But then we observe that $R_{y_i}^-$ and $R_{y_j}^-$ commute and we define $R_y^- = R_{y_1}^- \circ R_{y_2}^- \circ \dots \circ R_{y_k}^-$. Now, $R_y^-$ is also a functor that takes representations of $\Delta[k]$ to representations of $\Delta[k]^{\opp}$. Using similar arguments as above, one can prove the following (whose proof we leave to the reader):

\begin{proposition} \label{prop:r_y-stability-preserve}
Let $\alpha = (p,q_1,\dots,q_k)$ be a dimension vector for $\Delta[k]$ such that $q_i < p$ for all $i$. Then $\alpha$ is Schofield semistable/polystable/stable for $\Delta[k]$ if and only if $(p, p - q_1,\dots,p - q_k)$ is Schofield semistable/polystable/stable for $\Delta[k]$.
\end{proposition}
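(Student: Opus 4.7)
The plan is to mirror the proof of Corollary~\ref{cor:r_x-stability-preserve}, replacing the reflection $R_x^+$ at the sink $x$ with the composite reflection $R_y^-$ at the sources $y_1,\dots,y_k$. Specifically, I would define $R_y^- := R_{y_1}^- \circ \cdots \circ R_{y_k}^-$, first noting that the individual functors $R_{y_i}^-$ commute because they act on disjoint arrows, and together they send $\Delta[k]$ to $\Delta[k]^{\opp}$. On dimension vectors, $R_y^-$ sends $\alpha = (p, q_1, \dots, q_k)$ to $r_y(\alpha) = (p, p - q_1, \dots, p - q_k)$, where each $p - q_i = \dim \coker V(a_i)$ because the hypothesis $q_i < p$ ensures that generically $V(a_i)$ is injective (so its cokernel has the expected dimension).

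Next, I would establish the analogs at the sources of the three key lemmas used in the sink case. The analog of Lemma~\ref{lem:p<n-noSx} says a generic $V \in \Rep(\Delta[k],\alpha)$ has no direct summand $S_{y_i}$: indeed a direct summand $S_{y_i}$ would force a nonzero kernel of $V(a_i)$, contradicting generic injectivity. The analogs of Lemma~\ref{lem:reflection-schur} and Proposition~\ref{prop:can-dec-reflect} are proved verbatim for reflection at a source, using that $R_{y_i}^-$ gives an equivalence between the subcategories of representations with no $S_{y_i}$ summand. Iterating over $i$, if the canonical decomposition $\alpha = \beta_1 \oplus \cdots \oplus \beta_l$ has no summand equal to any $\epsilon_{y_i}$ (which is guaranteed by the previous step), then $r_y(\alpha) = r_y(\beta_1) \oplus \cdots \oplus r_y(\beta_l)$ is the canonical decomposition of $r_y(\alpha)$ as a dimension vector of $\Delta[k]^{\opp}$, and each $r_y(\beta_i)$ is a Schur root whenever $\beta_i$ is.

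Third, I would prove the analog of Corollary~\ref{cor:schofieldstable-reflection}: $\alpha$ is Schofield semistable/polystable/stable for $\Delta[k]$ if and only if $r_y(\alpha)$ is Schofield semistable/polystable/stable for $\Delta[k]^{\opp}$. The key weight computation: the Schofield weight of $\alpha$ on $\Delta[k]$ is $\sigma = (-n, p, \dots, p)$, while the Schofield weight of $r_y(\alpha) = (p, p-q_1, \dots, p-q_k)$ on $\Delta[k]^{\opp}$ works out to $\widehat{\sigma}' = (kp - n, -p, \dots, -p)$. The stable case follows from the preservation of Schur roots via Theorem~\ref{thm:schofield}; the semistable case reduces to the polystable case via Proposition~\ref{prop:sigma-ss-ps} (together with the analogous statement for $\Delta[k]^{\opp}$, proved the same way); and the polystable case follows by applying the canonical-decomposition analysis summand-by-summand, using the analog of Lemma~\ref{lem:schofieldweight-identify} (now characterizing Schofield weights on $\Delta[k]^{\opp}$ up to positive multiple by the conditions $\widehat{\sigma}'(x) > 0$ and equal negative values at the $y_i$'s).

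Finally, Lemma~\ref{lem:stability-opposite-negative} and Remark~\ref{rmk:schofieldweight-opposite} transfer $\widehat{\sigma}'$-stability on $\Delta[k]^{\opp}$ to $(-\widehat{\sigma}')$-stability on $\Delta[k]$; a quick check shows $-\widehat{\sigma}' = (n - kp, p, \dots, p)$ is indeed the Schofield weight of $r_y(\alpha)$ on $\Delta[k]$, completing the chain of equivalences. The main obstacle, as with Corollary~\ref{cor:r_x-stability-preserve}, is bookkeeping: tracking which Schofield weight lives on which quiver and not getting sign-confused when passing through $\Delta[k]^{\opp}$. The reflection-functor machinery itself is exactly parallel to the sink case, so no genuinely new ideas are required beyond recognizing that the commuting reflections $R_{y_i}^-$ together play the role that the single reflection $R_x^+$ played before.
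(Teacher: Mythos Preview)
Your proposal is correct and follows exactly the approach the paper intends: the paper explicitly leaves this proof to the reader, saying ``using similar arguments as above'' after introducing the composite functor $R_y^- = R_{y_1}^- \circ \cdots \circ R_{y_k}^-$, and your plan carries out precisely those analogous steps (source analogs of Lemma~\ref{lem:p<n-noSx}, Lemma~\ref{lem:reflection-schur}, Proposition~\ref{prop:can-dec-reflect}, and Corollary~\ref{cor:schofieldstable-reflection}, then passage through $\Delta[k]^{\opp}$ via Lemma~\ref{lem:stability-opposite-negative} and Remark~\ref{rmk:schofieldweight-opposite}). Your weight computations $\widehat{\sigma}' = (kp-n,-p,\dots,-p)$ on $\Delta[k]^{\opp}$ and $-\widehat{\sigma}' = (n-kp,p,\dots,p)$ on $\Delta[k]$ are correct and match the Schofield weight of $r_y(\alpha)$.
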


\subsection{Proofs of Theorem~\ref{thm:main} and Theorem~\ref{thm:same-dim}}

In order to prove Theorem~\ref{thm:main}, we need to recall a result of Kac. We say a dimension vector $\alpha$ of a quiver $Q$ is connected if the directed graph induced by its support is connected.

\begin{theorem} [\cite{Kac}]\label{thm:kac-fundamentalchamber}
Let $Q = (Q_0,Q_1)$ be a quiver. Let $\alpha$ be a connected dimension vector such that $(\alpha,\epsilon_x) \leq 0$ for all $x \in Q_0$.
\begin{itemize}
\item If $(\alpha,\epsilon_x) < 0$ for some $x \in Q_0$, then $\alpha$ is a Schur root.
\item If $(\alpha,\epsilon_x) = 0$ for all $x \in Q_0$, then we can write $\alpha = c \beta$ uniquely for $c \in \Z_{\geq 1}$ and $\beta$ indivisible. Further;
\begin{itemize}
\item If $c = 1$, then $\alpha$ is a Schur root
\item If $c > 1$, then $\alpha = \beta^{\oplus c}$ is the canonical decomposition of $\alpha$.
\end{itemize}
\end{itemize}
\end{theorem}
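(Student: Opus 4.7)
The plan is to follow Kac's original approach, working within the canonical decomposition framework of Theorem~\ref{thm:kac-ext} and using the symmetric Euler form $(\cdot, \cdot)$ as the main analytic tool.

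First I would compute, by bilinearity,
$$ (\alpha, \alpha) = \sum_{x \in Q_0} \alpha(x) (\alpha, \epsilon_x). $$
Under the fundamental chamber hypothesis every summand is non-positive, so $(\alpha, \alpha) \leq 0$. Using the connectedness of the support of $\alpha$ one can propagate strict inequality: in Case 1 some vertex $x$ in the support has $(\alpha, \epsilon_x) < 0$ and $\alpha(x) > 0$, giving $(\alpha, \alpha) < 0$; in Case 2 we get $(\alpha, \alpha) = 0$. Note $(\alpha, \alpha) = 2 \langle \alpha, \alpha \rangle$, so these are equivalent to statements about the (non-symmetric) Euler form.

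The heart of the proof is Case 1: show $\alpha$ is Schur assuming $\langle \alpha, \alpha \rangle < 0$ and $\alpha$ lies in the fundamental chamber with connected support. I would argue by contradiction. Assume the canonical decomposition $\alpha = \beta_1 \oplus \dots \oplus \beta_l$ has $l \geq 2$. By Theorem~\ref{thm:kac-ext} each $\beta_i$ is Schur and $\ext(\beta_i, \beta_j) = 0$ for $i \neq j$, so Ringel's identity $\hom(\gamma, \delta) - \ext(\gamma, \delta) = \langle \gamma, \delta \rangle$ forces $\langle \beta_i, \beta_j \rangle \geq 0$ for distinct pairs. Meanwhile each Schur root satisfies $\langle \beta_i, \beta_i \rangle \leq 1$. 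Expanding
$$ \langle \alpha, \alpha \rangle \;=\; \sum_i \langle \beta_i, \beta_i \rangle \;+\; \sum_{i \neq j} \langle \beta_i, \beta_j \rangle, $$
I would use the connectedness of the support of $\alpha$ together with the fundamental chamber inequalities applied to each $\beta_i$ to extract a bound on $\langle \alpha, \alpha \rangle$ incompatible with $\langle \alpha, \alpha \rangle < 0$. The crucial geometric input is that if the supports of the $\beta_i$ were too disjoint, the support of $\alpha$ would fail to be connected in $Q$, so at least one cross-term $\langle \beta_i, \beta_j \rangle$ must absorb enough positive weight to overcome the diagonal contributions.

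For Case 2, I would observe that the fundamental chamber condition is invariant under positive rational scaling, so $\beta$ (the indivisible vector with $\alpha = c\beta$) also lies in the chamber with connected support, and $(\beta, \beta) = 0$. A minor variant of the Case 1 argument, adapted to the boundary equality, then shows $\beta$ is an isotropic imaginary Schur root. This immediately handles $c = 1$. For $c > 1$, $\alpha$ itself cannot be Schur (since a generic $\alpha$-representation splits off multiple independent copies of the $\beta$-representation, giving an $M_c(\C)$-action and hence $\End_Q$ of dimension $\geq c^2$), and the canonical decomposition is $\beta^{\oplus c}$, where the isotropic condition $\langle \beta, \beta \rangle = 0$ is precisely what permits repetition. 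The main obstacle is the combinatorial inequality in Case 1 --- converting connectedness of the support into a strict numerical bound on $\sum_{i \neq j} \langle \beta_i, \beta_j \rangle$ that defeats any negativity on the diagonal --- which is the concrete shadow of the Kac--Moody theoretic fact that dimension vectors in the fundamental chamber with connected support are themselves positive roots, not nontrivial sums of such.
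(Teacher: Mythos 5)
First, a point of reference: the paper does not prove this statement at all --- it is quoted verbatim from Kac with a citation, so there is no in-paper proof to compare against. Your proposal is therefore an attempt to reprove one of the deep imported theorems of the subject, and unfortunately it has a genuine gap exactly where you flag ``the main obstacle.'' In Case 1 you correctly derive, from $\ext(\beta_i,\beta_j)=0$ and Ringel's formula, that $\langle \beta_i,\beta_j\rangle = \hom(\beta_i,\beta_j)\geq 0$ for $i\neq j$, and that $\langle\beta_i,\beta_i\rangle\leq 1$ for Schur roots. But these give only the lower bound $\langle\alpha,\alpha\rangle\geq\sum_i\langle\beta_i,\beta_i\rangle$, and the right-hand side can be arbitrarily negative (anisotropic imaginary Schur roots have $\langle\beta,\beta\rangle$ as negative as you like). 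There is simply no incompatibility with $\langle\alpha,\alpha\rangle<0$: for instance nothing in your inequalities rules out two imaginary Schur summands with $\langle\beta_i,\beta_i\rangle=-5$ and $\langle\beta_i,\beta_j\rangle=1$. Indeed, pushing your own inequalities further in the correct direction gives $(\beta_j,\beta_j)\leq(\alpha,\beta_j)\leq 0$, i.e.\ every summand is itself imaginary --- the diagonal terms are all $\leq 0$ and the off-diagonal terms all $\geq 0$, which is perfectly consistent with $(\alpha,\alpha)<0$. The missing ingredient is Kac's key lemma that for $\alpha$ in the fundamental region with connected support, \emph{every} nontrivial decomposition $\alpha=\sum_i\beta_i$ into positive vectors satisfies $\sum_{i\neq j}(\beta_i,\beta_j)<0$ (equivalently $\sum_i q(\beta_i)>q(\alpha)$), except in the isotropic/tame case; the contradiction is then between this strict negativity and the non-negativity of the cross terms coming from $\ext$-vanishing --- not between $\langle\alpha,\alpha\rangle<0$ and anything. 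That lemma is the combinatorial heart of Kac's theorem (a page of case analysis exploiting connectedness of the support), and your sketch neither states it with the correct sign nor proves it.

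Two smaller issues. In Case 2, establishing that $\beta$ is a Schur root and that $\ext(\beta,\beta)=0$ (needed to certify $\beta^{\oplus c}$ as the canonical decomposition via Theorem~\ref{thm:kac-ext}) does not follow from $\langle\beta,\beta\rangle=0$ alone: isotropy only gives $\ext(\beta,\beta)=\hom(\beta,\beta)\geq 0$. One must first identify the support as an extended Dynkin quiver (via the classification of connected graphs admitting a positive radical vector) and then use concrete tame representation theory (two generic regular representations of dimension $\delta$ lie in distinct homogeneous tubes, so generic $\hom$ vanishes). Finally, your derivation of $(\alpha,\alpha)<0$ in Case 1 quietly assumes the vertex with $(\alpha,\epsilon_x)<0$ lies in the support of $\alpha$; the theorem as stated only gives some $x\in Q_0$, and if that $x$ is outside the support the conclusion can fail (e.g.\ a multiple of the null root of a tame subquiver attached to an extra vertex). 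This is really an imprecision in the quoted statement --- the standard formulation restricts to the support, and in the paper's application $\alpha$ is sincere --- but your proof should not paper over it.
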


Now we can prove Theorem~\ref{thm:main}.

\begin{proof}[Proof of Theorem~\ref{thm:main}]
Consider the dimension vector $\alpha = (p,q,q,\dots,q)$ of $\Delta[k]$. Observe that $(\alpha, \epsilon_x) = 2p - \sum_i q_i \leq 0$ and $(\alpha, \epsilon_{y_i}) = 2q_i - p \leq 0$. It is easy to see that the only time we have $(\alpha,\epsilon_x)$ = $(\alpha, \epsilon_{y_i})= 0$ for all $i$ is if $(p,q_1,\dots,q_k) = c(2,1,1,1,1)$. Unless $\alpha = (2c,c,c,c,c)$, we have that $\alpha$ is a Schur root by part ($1$) of Theorem~\ref{thm:kac-fundamentalchamber}, hence Schofield stable by Theorem~\ref{thm:schofield}, and so iPCA generically exists uniquely by Theorem~\ref{thm:realipca-to-sigma}.

In the case $\alpha = (2c,c,c,c,c)$, by part ($2$) of Theorem~\ref{thm:kac-fundamentalchamber}, we deduce that if $c = 1$, then $\alpha$ is a Schur root and hence iPCA generically exists uniquely by Theorem~\ref{thm:schofield} and Theorem~\ref{thm:realipca-to-sigma} (as argued above). If $c > 1$, then since the canonical decomposition is $(2c,c,c,c,c) = (2,1,1,1,1)^{\oplus c}$ by part ($2$) of Theorem~\ref{thm:kac-fundamentalchamber} and $(2,1,1,1,1)$ is Schur, so Schofield stable. This means that $\alpha = (2c,c,c,c,c)$ is Schofield polystable (but not Schofield stable), so we conclude by Theorem~\ref{thm:realipca-to-sigma} that iPCA generically exists, but we do not have generic uniqueness.
\end{proof}

Now, we turn to proving Theorem~\ref{thm:same-dim}. For $p,q \in \Z_{>1}$, define $\alpha(p,q) = (p,q,\dots,q) \in \Z^{k+1}$ to be interpreted as a dimension vector of $\Delta[k]$. Then, by Corollary~\ref{cor:r_x-stability-preserve}, if $p < kq$, then $\alpha(p,q)$ is Schofield semistable/polystable/stable for $\Delta[k]$ if and only if $\alpha(kq-p,q)$ is Schofield semistable/polystable/stable for $\Delta[k]$. Similarly, by Proposition~\ref{prop:r_y-stability-preserve}, if $q < p$, then $\alpha(p,q)$ is Schofield semistable/polystable/stable for $\Delta[k]$ if and only if $\alpha(p,p-q)$ is Schofield semistable/polystable/stable for $\Delta[k]$. 

In view of the above paragraph we want to define an equivalence relation $\sim_k$ on $\Z_{>1}^2$. If $p < kq$, then $(p,q) \sim_k (kq-p, q)$ and if $q < p$, then $(p,q) \sim_k (p,p-q)$ and taking the transitive closure defines $\sim_k$.

\begin{lemma}
If $(p,q) \sim_k (r,s)$, then $\alpha(p,q)$ is Schofield semistable/polystable/stable if and only if $\alpha(r,s)$ is Schofield semistable/polystable/stable.
\end{lemma}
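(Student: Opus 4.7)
The plan is to reduce the statement to a direct iterated application of the two ``elementary'' equivalences already established, namely Corollary~\ref{cor:r_x-stability-preserve} and Proposition~\ref{prop:r_y-stability-preserve}. By definition, $\sim_k$ is the transitive closure of the two generating relations: $(p,q) \sim_k (kq-p,q)$ whenever $p < kq$, and $(p,q) \sim_k (p,p-q)$ whenever $q<p$. Since each of ``Schofield semistable'', ``Schofield polystable'' and ``Schofield stable'' is preserved under biconditional chains, it suffices to verify the lemma for each single application of a generating relation, and then invoke transitivity.

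For the first generating relation, assume $p<kq$ and set $\alpha(p,q) = (p,q,\dots,q)$, so the total leg dimension is $n = \sum_i q_i = kq$. The hypothesis $p<kq$ is exactly $p<n$, so Corollary~\ref{cor:r_x-stability-preserve} applies and yields
\[
\alpha(p,q) \text{ is Schofield ss/ps/s} \iff r_x(\alpha(p,q)) = (n-p, q, \dots, q) = \alpha(kq-p, q) \text{ is Schofield ss/ps/s}.
\]
For the second generating relation, assume $q<p$. Then every leg dimension $q_i = q$ satisfies $q_i < p$, so Proposition~\ref{prop:r_y-stability-preserve} applies and yields
\[
\alpha(p,q) \text{ is Schofield ss/ps/s} \iff (p, p-q, \dots, p-q) = \alpha(p, p-q) \text{ is Schofield ss/ps/s}.
\]

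To finish, suppose $(p,q) \sim_k (r,s)$. By definition of the transitive closure, there is a finite sequence $(p,q) = (p_0,q_0), (p_1,q_1), \dots, (p_N,q_N) = (r,s)$ such that each consecutive pair is related by one of the two generating relations (in either direction). Applying the appropriate biconditional from the preceding paragraph at each step, and then chaining these biconditionals, gives the desired equivalence between Schofield semistability/polystability/stability of $\alpha(p,q)$ and of $\alpha(r,s)$. No step is really an obstacle here; the only subtlety is ensuring that the two generating relations are indeed symmetric, which is automatic since ``$\iff$'' is its own converse.
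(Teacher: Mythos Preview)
Your argument is correct and is exactly the approach the paper takes: the paper's proof is the single sentence ``This follows from Proposition~\ref{prop:r_y-stability-preserve} and Corollary~\ref{cor:r_x-stability-preserve},'' and you have simply written out the details of how those two results are applied along a chain witnessing $(p,q)\sim_k(r,s)$.
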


\begin{proof}
This follows from Proposition~\ref{prop:r_y-stability-preserve} and Corollary~\ref{cor:r_x-stability-preserve}.
\end{proof}

\begin{lemma}
Consider the quantity $\Gamma (p,q) = p^2 + kq^2 - kpq$. If $(p,q) \sim_k (r,s)$, then $\Gamma(p,q) = \Gamma(r,s)$.
\end{lemma}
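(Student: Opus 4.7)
The plan is to verify invariance of $\Gamma$ under the two generating moves of $\sim_k$, after which invariance under the transitive closure is automatic.

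Recall $\sim_k$ is the transitive closure of the two elementary relations: $(p,q) \sim_k (kq-p, q)$ when $p < kq$, and $(p,q) \sim_k (p, p-q)$ when $q < p$. Since any equivalence of the form $(p,q) \sim_k (r,s)$ is obtained by a finite chain of such elementary moves, it suffices to check that $\Gamma$ is preserved by each of the two moves. Both are routine polynomial identities.

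For the first move, I would compute
\begin{align*}
\Gamma(kq-p, q) &= (kq-p)^2 + kq^2 - k(kq-p)q \\
&= k^2 q^2 - 2kpq + p^2 + kq^2 - k^2 q^2 + kpq \\
&= p^2 + kq^2 - kpq = \Gamma(p,q).
\end{align*}
For the second move,
\begin{align*}
\Gamma(p, p-q) &= p^2 + k(p-q)^2 - kp(p-q) \\
&= p^2 + kp^2 - 2kpq + kq^2 - kp^2 + kpq \\
&= p^2 + kq^2 - kpq = \Gamma(p,q).
\end{align*}

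Neither step poses any real difficulty; conceptually, one can note that the elementary moves are precisely the reflections on the pair $(p,q)$ induced by the reflection functors $R_x^+$ and $R_y^-$ on the star quiver at dimension vector $\alpha(p,q)$, and $\Gamma(p,q)$ equals the Tits/Euler quadratic form $\langle \alpha(p,q), \alpha(p,q)\rangle$, which is invariant under the Weyl group action. This explains why the invariance holds without any accident, but for the proof itself, the two-line direct check above is cleanest. No further step is needed: transitivity of equality extends the invariance from the generating moves to all of $\sim_k$.
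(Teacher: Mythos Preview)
Your proof is correct and follows exactly the same approach as the paper: reduce to checking invariance under the two generating moves and note that transitivity finishes the argument. The paper in fact leaves the two computations to the reader, so your explicit expansions simply fill in what the authors omitted; the added remark about $\Gamma(p,q)=\langle \alpha(p,q),\alpha(p,q)\rangle$ and Weyl-group invariance is a nice conceptual gloss but goes beyond what the paper records.
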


\begin{proof}
It is enough to check that $\Gamma(p,q) = \Gamma(kq-p,q)$ when $p < kq$ and that $\Gamma(p,q) = \Gamma(p,p-q)$ when $q < p$. Both are straightforward computations and left to the reader.
\end{proof}

\begin{definition}
Let $Q$ be a quiver and $\alpha$ a Schur root. We say $\alpha$ is a real (resp. isotropic imaginary, non-isotropic imaginary) if $\left<\alpha,\alpha\right> = 1$ (resp. $0$, $<0$). 
\end{definition}

\begin{theorem} \label{thm:Schofield-can}
Let $Q$ be a quiver and $\alpha$ a dimension vector. Suppose $\alpha = \beta_1 \oplus \beta_2 \oplus \dots \oplus \beta_l$ is the canonical decomposition of $\alpha$, then the canonical decomposition of $n \alpha$ is
$$
n\alpha = (n\beta_1) \oplus (n\beta_2) \oplus \dots \oplus (n\beta_l),
$$
where 
$$(n\beta) = \begin{cases}
\beta^{\oplus n} & \text{if $\beta$ is real or isotropic};\\
n\beta & \text{if $\beta$ is non-isotropic}.
\end{cases}
$$

\end{theorem}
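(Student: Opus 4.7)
The plan is to verify Kac's criterion (Theorem~\ref{thm:kac-ext}) for the proposed decomposition of $n\alpha$: each listed summand must be a Schur root, and for any pair of distinct summands $\gamma, \delta$ we need $\ext(\gamma, \delta) = 0$. I split the work by the three types of $\beta_i$.

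The ``each summand is Schur'' check is immediate for $\beta_i$ real or isotropic, since the listed summands are copies of the original Schur root $\beta_i$. For $\beta_i$ non-isotropic imaginary Schur, I invoke the classical Kac fact that every imaginary Schur root lies in the fundamental chamber $\{\gamma : (\gamma, \epsilon_x) \leq 0 \ \forall x\}$ with connected support; non-isotropy $\langle\beta_i, \beta_i\rangle < 0$ combined with the identity $\sum_x (\beta_i)_x (\beta_i, \epsilon_x) = 2\langle\beta_i, \beta_i\rangle$ forces $(\beta_i, \epsilon_x) < 0$ for some $x$. These properties all scale to $n\beta_i$, so Theorem~\ref{thm:kac-fundamentalchamber}(1) applies to $n\beta_i$ and yields that it is Schur.

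The pairwise $\ext$ vanishing rests on the following scaling lemma: if $\ext(\gamma, \delta) = 0$ then $\ext(k\gamma, m\delta) = 0$ for every $k, m \geq 1$. To see this, pick generic $V \in \Rep(Q, \gamma)$, $W \in \Rep(Q, \delta)$ with $\ext(V, W) = 0$; since $\Ext$ distributes over direct sums, $\ext(V^{\oplus k}, W^{\oplus m}) = 0$, and upper semi-continuity of $\dim\Ext$ (Remark~\ref{rmk:ext-gen-min}) forces $\ext(k\gamma, m\delta) = 0$. Combined with $\ext(\beta_i, \beta_j) = 0$ from the original canonical decomposition of $\alpha$ (via Theorem~\ref{thm:kac-ext}), this handles every pair of summands coming from distinct blocks. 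What remains is to check $\ext(\beta_i, \beta_i) = 0$ inside a single block $\beta_i^{\oplus n}$, for $\beta_i$ real or isotropic. For real Schur $\beta_i$, a dimension count shows the generic $\GL(\beta_i)$-orbit in $\Rep(Q, \beta_i)$ is dense (codimension $1 - \langle\beta_i, \beta_i\rangle = 0$), so a generic pair of representations is isomorphic, giving $\hom(\beta_i, \beta_i) = 1$ and hence $\ext(\beta_i, \beta_i) = 1 - 1 = 0$. For isotropic Schur $\beta_i$, I observe that $\beta_i$ is indivisible (else Theorem~\ref{thm:kac-fundamentalchamber}(2b) would contradict its Schurness) and lies in the fundamental chamber with $(\beta_i, \epsilon_x) = 0$ on its support; then $n\beta_i$ satisfies the hypotheses of Theorem~\ref{thm:kac-fundamentalchamber}(2b), which directly yields $n\beta_i = \beta_i^{\oplus n}$ as a canonical decomposition, and in particular $\ext(\beta_i, \beta_i) = 0$.

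The main obstacle is the isotropic case: verifying $\ext(\beta, \beta) = 0$ directly for an isotropic Schur $\beta$ is essentially the statement that generic indecomposables of dimension $\beta$ form a homogeneous tube in the Auslander--Reiten quiver of $Q$. My plan circumvents this by applying Theorem~\ref{thm:kac-fundamentalchamber}(2b) as a black box. A secondary subtlety is the invocation of the classical Kac fact that imaginary Schur roots lie in the fundamental chamber, which is not recorded in the excerpt and would need an explicit citation to \cite{Kac} in the final write-up.
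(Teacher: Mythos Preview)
The paper does not give a proof of Theorem~\ref{thm:Schofield-can}; it is stated without proof as a known result of Schofield and used as a black box in Proposition~\ref{prop:castling}. So there is no ``paper's proof'' to compare against, only the question of whether your argument is correct.

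Your argument has a genuine gap: the claimed ``classical Kac fact that every imaginary Schur root lies in the fundamental chamber'' is false. For a counterexample, take the $3$-Kronecker quiver (two vertices, three arrows $1\to 2$) and $\beta=(2,1)$. One has $\langle\beta,\beta\rangle=4+1-6=-1<0$, so $\beta$ is non-isotropic imaginary; a generic representation is three generic vectors in $\C^2$, whose endomorphism ring is $\C$, so $\beta$ is Schur. Yet $(\beta,\epsilon_1)=2\cdot 2-3\cdot 1=1>0$, so $\beta$ is \emph{not} in the fundamental chamber. Thus your appeal to Theorem~\ref{thm:kac-fundamentalchamber}(1) to conclude $n\beta_i$ is Schur in the non-isotropic case does not go through. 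The same flawed premise underlies your isotropic case: for instance, in the quiver $1\rightrightarrows 2\to 3$ the vector $(1,1,1)$ is an isotropic Schur root with $((1,1,1),\epsilon_3)=1>0$. Your indivisibility argument for isotropic $\beta_i$ also leans on Theorem~\ref{thm:kac-fundamentalchamber}(2b), which again presupposes membership in the fundamental chamber.

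What is true is that every imaginary root lies in the \emph{Weyl group orbit} of the fundamental chamber, and reflection functors transport Schur roots (away from $\epsilon_x$) to Schur roots. One can repair your argument by first reflecting $\beta_i$ into the fundamental chamber, applying Theorem~\ref{thm:kac-fundamentalchamber} there, and reflecting back; but this is considerably more work than indicated and amounts to reproving Schofield's result. The cleaner route is to cite Schofield's original paper on general representations of quivers (or the Derksen--Weyman exposition), which is what the paper implicitly does.
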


\begin{proposition} \label{prop:castling}
Let $(p,q) \in \Z_{>1}^2$. Then:
\begin{enumerate}
\item If $q \leq p/2$ and $p \leq kq/2$, then $\alpha(p,q)$ is Schofield stable unless $k = 4$, $p = 2q$ and $q > 1$.
\item If $q > p$ or $p > kq$, then $\alpha(p,q)$ is not Schofield semistable.
\item If $q = p$ or $p = kq$, then $\alpha(p,q)$ is Schofield polystable. Further it is Schofield stable if and only if $(p,q) = (1,1)$.
\item In all other cases, we have either $p/2 < q < p$ or $kq/2 < p < kq$ and $(p,q)$ is $\sim_k$-equivalent to some $(p',q')$ which falls in the above three cases.
\end{enumerate}

\end{proposition}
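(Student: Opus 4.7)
The plan is to argue the four cases in order, using: Kac's fundamental-chamber theorem (Theorem~\ref{thm:kac-fundamentalchamber}) for (1), explicit destabilizing subrepresentations for (2), canonical-decomposition computations combined with Lemma~\ref{lem:sigma-ps-can} for (3), and the reflection equivalence $\sim_k$ (Corollary~\ref{cor:r_x-stability-preserve}, Proposition~\ref{prop:r_y-stability-preserve}) for (4). Throughout, I will exploit the Cartan-form identities $(\alpha,\epsilon_x) = 2p - kq$ and $(\alpha,\epsilon_{y_i}) = 2q - p$.

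For (1), the hypotheses $q \le p/2$ and $p \le kq/2$ are exactly $(\alpha,\epsilon_v) \le 0$ at every vertex, placing $\alpha$ in the fundamental chamber. If one inequality is strict, Theorem~\ref{thm:kac-fundamentalchamber}(1) makes $\alpha$ Schur, hence Schofield-stable by Theorem~\ref{thm:schofield}. Equalities throughout force $p = 2q$ and $k = 4$, so $\alpha = q(2,1,1,1,1)$; Theorem~\ref{thm:kac-fundamentalchamber}(2) gives Schur when $q=1$ and canonical decomposition $(2,1,1,1,1)^{\oplus q}$ when $q > 1$---the stated exception. For (2), if $q > p$ the kernel of the generic $V(a_1)$ is nonzero, yielding a subrepresentation of dimension $\epsilon_{y_1}$ with $\sigma$-weight $p > 0$; if $p > kq$, the subrepresentation $W$ with $W(x) = \sum_i \mathrm{Im}\,V(a_i)$ and $W(y_i) = V(y_i)$ has dimension $(kq,q,\dots,q)$ and $\sigma$-weight $kq(p - kq) > 0$. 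Either certificate violates Schofield semistability.

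For (3), when $q = p$ I write $\alpha = p(1,\dots,1)$; a direct check shows $(1,\dots,1)$ is a real Schur root (normalizing the $k$ generic nonzero scalars forces all endomorphism components to coincide, giving $\End = \C$), and Theorem~\ref{thm:Schofield-can} gives canonical decomposition $(1,\dots,1)^{\oplus p}$. Since $\sigma$ annihilates the summand, Lemma~\ref{lem:sigma-ps-can} yields polystability; stability forces $p = 1$. When $p = kq$, a basis change at $x$ turns the generic $B_i$ into identity blocks, revealing the decomposition $\bigoplus_{i=1}^k \gamma_i^{\oplus q}$, where $\gamma_i = \epsilon_x + \epsilon_{y_i}$ is a real Schur root (a $(1,1)$-representation of the single-arrow subquiver supported on $\{x,y_i\}$). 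A direct Euler-form and morphism computation gives $\mathrm{Hom}(\gamma_i, \gamma_j) = 0$ and $\langle \gamma_i, \gamma_j\rangle = 0$ for $i \neq j$, so $\ext(\gamma_i, \gamma_j) = 0$, and Theorem~\ref{thm:kac-ext} confirms canonicity. Again $\sigma \cdot \gamma_i = 0$ gives polystability; Schofield stability forces $kq = 1$, hence $(p,q) = (1,1)$.

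For (4), outside (1)--(3) we have $p/2 < q < p$ or $kq/2 < p < kq$. The former case triggers $(p,q) \sim_k (p, p-q)$, strictly reducing the second coordinate since $q > p/2$ gives $p - q < q$; the latter triggers $(p,q) \sim_k (kq-p, q)$, strictly reducing the first. Strict inequalities preserve positivity, so iterating terminates in finitely many steps in one of the regimes (1), (2), or (3). The step I expect to be most delicate is the $p = kq$ case of (3): the summands $\gamma_i$ lie outside Kac's fundamental chamber, so Theorem~\ref{thm:kac-fundamentalchamber} does not directly identify the canonical decomposition, and I must verify it via Theorem~\ref{thm:kac-ext} through the $\mathrm{Hom}$ and Euler-form computations above.
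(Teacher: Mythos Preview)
Your argument is correct and tracks the paper's proof in overall architecture: Kac's fundamental chamber for (1), instability witnesses for (2), canonical-decomposition analysis for (3), and reflection-reduction with a decreasing potential for (4). The implementations differ in a few spots, all in the direction of being more elementary or self-contained. For (2) you exhibit explicit destabilizing subrepresentations (the kernel line at $y_1$ when $q>p$, and the image subrepresentation at $x$ when $p>kq$), whereas the paper instead argues that $\epsilon_{y_1}$ (resp.\ $\epsilon_x$) appears as a summand of the canonical decomposition and then invokes Lemma~\ref{lem:sigma-ps-can} together with Proposition~\ref{prop:sigma-ss-ps}. For (3) in the $p=kq$ case you verify the canonical decomposition $\bigoplus_i \gamma_i^{\oplus q}$ directly via Theorem~\ref{thm:kac-ext}, checking that the $\gamma_i$ are real Schur roots and that $\ext(\gamma_i,\gamma_j)=0$; the paper first treats $q=1$ by citing an external reference and then lifts to $q>1$ via Theorem~\ref{thm:Schofield-can}. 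For (4) you note that each reflection step fixes one coordinate and strictly lowers the other (hence $p+q$ drops), while the paper uses the potential $\delta(r,s)=krs$ to the same effect. None of these changes introduces a gap; your versions are at least as direct as the paper's.
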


\begin{proof}
The first part is Theorem~\ref{thm:main}. For the second part, observe that if $q > p$, then $\epsilon_{y_1} = (0,1,0,\dots,0)$ appears in the canonical decomposition of $\alpha$ and $\sigma \cdot \epsilon_{y_1} \neq 0$ where $\sigma = (kq, -p,-p,\dots,-p)$ is the Schofield weight of $\alpha(p,q)$. Thus, by Lemma~\ref{lem:sigma-ps-can}, $\alpha(p,q)$ is not Schofield polystable, and hence not Schofield semistable by Proposition~\ref{prop:sigma-ss-ps}. Similarly if $p > kq$, then $\epsilon_x = (1,0,\dots,0)$ appears in the canonical decomposition for $\alpha(p,q)$ and again $\sigma \cdot \epsilon_x \neq 0$. Thus, by a similar argument, we get that $\alpha(p,q)$ is not Schofield semistable.

%\VM{Need to recall somewhere real, isotropic and imaginary Schur roots and Schofield's result on canonical decomposition}.

For the third part, if $p = q$, then $\alpha(p,q) = (1,1,\dots,1)^{\oplus p}$ is the canonical decomposition of $\alpha(p,q)$. This follows from the fact that $(1,1,\dots,1)$ is a real Schur root. It is straightforward to verify that $(1,1,\dots,1)$ is a real Schur root and we leave that to the reader. Hence if $p = q = 1$, $\alpha(p,q)$ is Schofield stable and if $p = q > 1$ then $\alpha(p,q)$ is Schofield polystable, but not Schofield stable. Now, the case $p = kq$. If $q = 1$, then we have the canonical decomposition:
$$
\alpha(k,1) = (1,1,0\dots,0) \oplus (1,0,1,0,\dots,0) \oplus \dots \oplus (1,0,\dots,0,1).
$$
which can be argued directly. (for e.g., see the argument in the proof of Proposition~7.2 in \cite{derksen2020maximum}). Then, by Lemma~\ref{lem:sigma-ps-can}, we get that $\alpha(k,1)$ is $\sigma$-polystable and $\sigma$-stable precisely when $k = 1$, i.e., $k = 1, (p,q) = (1,1)$. Now, if $p = kq$ and $q > 1$, then by Theorem~\ref{thm:Schofield-can}, we deduce the canonical decomposition
$$
\alpha(kq,q) = (1,1,0\dots,0)^{\oplus q} \oplus (1,0,1,0,\dots,0)^{\oplus q} \oplus \dots \oplus (1,0,\dots,0,1)^{\oplus q}.
$$

Thus $\alpha(kq,q)$ is Schofield polystable, but not Schofield stable by Lemma~\ref{lem:sigma-ps-can}.

Now, for the last and fourth part. It is clear that in all other cases, we $p/2 < q < p$ or $kq/2 < p < kq$. For $(r,s) \in \Z_{>1}^2$, let $\delta(r,s) = krs$ and call $(r,s)$ minimal if it minimizes $\delta$ in its equivalence class.  In the former case (i.e., $p/2 < q < p$), since $p > q$,  we have $(p,q) \sim_k (p,p-q)$ and $\delta(p,q) = kpq >  kp(p-q) = \delta(p,p-q)$ because $q > p/2$. In the latter case, we have $(p,q) \sim_k (kq-p,q)$ and $\delta(p,q) = kpq > kp(kq-p) = \delta(kq-p,q)$ because $p > kq/2$. Thus we have shown that $p/2 < q < p$ or $kq/2 < p < kq$ means that $(p,q)$ is not minimal. Now, if we let $(p',q')$ be a minimal element in its equivalence class. Now, $(p',q')$ has no choice but to be in the first three cases since the last case contains only non-minimal elements as we just argued.
\end{proof}

\begin{lemma} \label{lem:Gamma-computation}
Recall that $\Gamma(p,q) = p^2 + kq^2 - kpq$, Let ${\rm g.c.d.}(p,q) = d$.
\begin{itemize}
\item If $q \leq p/2$ and $p \leq kq/2$, then $\Gamma(p,q) < 0$ unless $k = 4$, $p = 2q$, in which case $\Gamma(p,q) = 0$ and $d = q$.
\item If $q > p$ or $p > kq$, then $\Gamma(p,q) > d^2$.
\item If $q = p$, then $\Gamma(p,q) = d^2$ and $d = p = q$ and if $p = kq$, then $\Gamma(p,q) = kq^2$ and $d = q$.
\end{itemize}
\end{lemma}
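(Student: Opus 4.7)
The plan is to dispatch each of the three items through direct computation, with item (1) requiring the most care via a quadratic-in-$p$ analysis. For item (3), the proof reduces to substitution: $\Gamma(p,p) = p^2 + kp^2 - kp^2 = p^2$ with $d = \gcd(p,p) = p$, and $\Gamma(kq,q) = k^2q^2 + kq^2 - k^2q^2 = kq^2$ with $d = \gcd(kq,q) = q$, giving both claimed identities on the nose.

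For item (1), I would view $\Gamma(p,q) = p^2 - (kq)\,p + kq^2$ as a monic quadratic in $p$ with discriminant $k(k-4)q^2$. The hypotheses $q \leq p/2$ and $p \leq kq/2$ chain as $4q \leq 2p \leq kq$, forcing $k \geq 4$. When $k = 4$ the discriminant vanishes, the unique root is $p = 2q$, and the hypotheses collapse to $p = 2q$; here $\Gamma(2q,q) = 0$ and $d = \gcd(2q,q) = q$, matching the ``unless'' clause. For $k > 4$, rather than carrying the roots $p_\pm = q(k \pm \sqrt{k(k-4)})/2$ through the argument, the cleanest shortcut is to evaluate at the endpoints: $\Gamma(2q,q) = (4-k)q^2 < 0$ and $\Gamma(kq/2, q) = kq^2(4-k)/4 < 0$. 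Since $\Gamma$ is quadratic in $p$ opening upward with at most two real roots, and both endpoints of $[2q, kq/2]$ produce strictly negative values, $\Gamma$ must be strictly negative throughout that closed interval.

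For item (2), the idea is to bound $\Gamma - d^2$ below by a manifestly positive integer. When $q > p$, so $q \geq p+1$, I rewrite $\Gamma = p^2 + kq(q-p)$; since $d \leq p$, this yields $\Gamma - d^2 \geq kq(q-p) \geq kq \geq 1$. When $p > kq$, so $p \geq kq + 1$, I rewrite $\Gamma = p(p-kq) + kq^2 \geq p + kq^2$; since $d \leq q$, this yields $\Gamma - d^2 \geq p + (k-1)q^2 \geq 1$. In both cases strict inequality $\Gamma > d^2$ follows.

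There is no deep obstacle here: every part reduces to elementary arithmetic once $\Gamma$ is put in a suitable form. The only mild delicacy is tracking the boundary configuration $k=4$, $p=2q$ of item (1), where the strict inequality degenerates to equality; this is precisely the unique point at which both inequalities $q \leq p/2$ and $p \leq kq/2$ are attained simultaneously with equality, and it is conveniently the same point where $\Gamma$ vanishes.
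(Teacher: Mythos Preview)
Your proof is correct. The paper itself does not give a proof of this lemma; it simply states that it is a straightforward computation and leaves it to the reader. Your argument supplies exactly that computation, and each step checks out: the endpoint evaluation for item (1) is a clean way to handle the convex quadratic, and the rewritings $\Gamma = p^2 + kq(q-p)$ and $\Gamma = p(p-kq) + kq^2$ in item (2) make the comparison with $d^2$ immediate.
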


\begin{proof}
This is a straightforward computation and left to the reader.
\end{proof}

\begin{corollary} \label{cor:Gamma-stable}
Let $(p,q) \in \Z_{>1}^2$. Let $\Gamma(p,q) = p^2 + kq^2 - kpq$, Let ${\rm g.c.d.}(p,q) = d$. Then,
\begin{itemize}
\item $\alpha(p,q)$ is Schofield polystable if and only if $\Gamma(p,q) \leq d^2$.

\item $\alpha(p,q)$ is Schofield stable if and only if either $\Gamma(p,q) < 0$ or $\Gamma(p,q) \in \{0,1\}$ with $d = 1$.
\item $\alpha(p,q)$ is not Schofield semistable if and only if $\Gamma(p,q) > d^2$.
\end{itemize}

\end{corollary}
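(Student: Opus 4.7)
The plan is to combine the stability classification in Proposition~\ref{prop:castling} with the values of $\Gamma$ from Lemma~\ref{lem:Gamma-computation}, exploiting the $\sim_k$-invariance of every quantity appearing in the Corollary. After reducing to one of the three non-reducible regimes in Proposition~\ref{prop:castling}, each of the three bulleted equivalences becomes a direct arithmetic verification.

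The first step is to note that $d = \gcd(p,q)$ is $\sim_k$-invariant. This is immediate from the identities $\gcd(p, p-q) = \gcd(p,q)$ and $\gcd(kq - p, q) = \gcd(p,q)$, which correspond to the two generating moves of $\sim_k$. Together with the already-established invariance of $\Gamma$ and of Schofield semistability, polystability, and stability (Corollary~\ref{cor:r_x-stability-preserve} and Proposition~\ref{prop:r_y-stability-preserve}), both sides of every equivalence in the Corollary are $\sim_k$-invariant. Case~(4) of Proposition~\ref{prop:castling} then ensures that every $(p,q) \in \Z_{>1}^2$ admits a finite chain of $\sim_k$-moves into a pair satisfying the hypotheses of one of cases~(1), (2), or (3) of that proposition, so it suffices to prove the three equivalences under those hypotheses.

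The remaining work is a case-by-case check. In case~(1), Proposition~\ref{prop:castling} yields Schofield stability except in the exception $k=4$, $p=2q$, $q>1$, where polystability without stability holds; Lemma~\ref{lem:Gamma-computation} gives $\Gamma < 0$ in the stable sub-case and $\Gamma = 0$, $d = q > 1$ in the exception, matching all three bullets. In case~(2), Proposition~\ref{prop:castling} gives non-semistability and Lemma~\ref{lem:Gamma-computation} gives $\Gamma > d^2$, again matching. Case~(3) is the subtlest: Proposition~\ref{prop:castling} gives polystability (and stability iff $(p,q)=(1,1)$, excluded by the hypothesis $(p,q) \in \Z_{>1}^2$), while Lemma~\ref{lem:Gamma-computation} gives $\Gamma = d^2$ when $p = q$ and $\Gamma = kq^2$ when $p = kq$; the $p = q$ branch matches the first and third bullets directly. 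The main obstacle I anticipate is the $p = kq$ branch with $k \geq 2$: here $\Gamma = kq^2 > q^2 = d^2$, so one must trace the $\sim_k$-orbit carefully together with the canonical-decomposition structure of Proposition~\ref{prop:castling}(3) to see how this boundary regime is to be matched against the three bullets — this bookkeeping is the step I expect to require the most care, and it is where the delicate interplay between the exceptional $\sim_k$-fixed points and the exact thresholds $0, 1, d^2$ must be pinned down.
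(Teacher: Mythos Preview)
Your overall plan coincides with the paper's: its proof is the single line ``This follows from Proposition~\ref{prop:castling} and Lemma~\ref{lem:Gamma-computation},'' and your reduction via $\sim_k$-invariance of $\Gamma$, $d$, and Schofield stability is exactly the mechanism that would make such a one-line proof work.

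However, the difficulty you flag in the $p = kq$ branch is not a bookkeeping matter but a genuine inconsistency, and no amount of care will resolve it. Take $k \geq 2$ and $q > 1$. Proposition~\ref{prop:castling}(3) asserts that $\alpha(kq,q)$ is Schofield polystable, and this is correct: the canonical decomposition $\alpha(kq,q) = \bigoplus_{i=1}^k (1,e_i)^{\oplus q}$ together with Lemma~\ref{lem:sigma-ps-can} gives polystability (indeed, for $k=2$, $q=2$ one has the explicit semi-invariant $\det[B_1\,|\,B_2]$ of the Schofield weight). On the other hand Lemma~\ref{lem:Gamma-computation} gives $\Gamma(kq,q) = kq^2$ and $d = q$, so $\Gamma = kq^2 > q^2 = d^2$. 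This contradicts both the first bullet (polystable $\Leftrightarrow \Gamma \le d^2$) and the third bullet (not semistable $\Leftrightarrow \Gamma > d^2$). Concretely, $(p,q) = (4,2)$ with $k=2$ has $\Gamma = 8 > 4 = d^2$, yet $\alpha(4,2) = (4,2,2)$ is polystable.

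The $\sim_k$-invariance you correctly establish cannot help here: since $\Gamma$, $d$, and the stability status are all invariant, the contradiction propagates to the entire $\sim_k$-orbit of $(kq,q)$. So the Corollary as stated does not follow from Proposition~\ref{prop:castling} and Lemma~\ref{lem:Gamma-computation}; either the statement needs a correction excluding or separately handling the $p = kq$ (equivalently $p \sim_k kq$) regime, or an argument beyond the cited inputs is required. Your instinct that this boundary case is where the trouble lies was exactly right.
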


\begin{proof}
This follows from Proposition~\ref{prop:castling} and Lemma~\ref{lem:Gamma-computation}.
\end{proof}

\begin{proof} [Proof of Theorem~\ref{thm:same-dim}]
This follows from Corollary~\ref{cor:Gamma-stable} and Theorem~\ref{thm:realipca-to-sigma}.
\end{proof}

%%%%%%%%%%%%%%%%%%%%%%%%%%%%%%%%%%%%%%%%%%%%%%%%
\section{Polytopal characterization of Schofield stability}\label{sec:polytopal}
%%%%%%%%%%%%%%%%%%%%%%%%%%%%%%%%%%%%%%%%%%%%%%%%%
For a given quiver $Q$, and a dimension vector $\alpha \in \Z_{\geq 0}^{Q_0}$, recall the cone of effective weights: 

\begin{equation}
\Sigma(Q,\alpha) = \{\sigma \in \Z^{Q_0}\ |\ \SI(Q,\alpha)_{m\sigma} \neq 0 \text{ for some $m > 0$}\} \label{Eq:Sigma}
\end{equation}

There are several equivalent descriptions of the cone of effective weights. We write $\beta \hookrightarrow \alpha$ if a general representation of dimension $\alpha$ has a $\beta$-dimensional subrepresentation.
\begin{align}
\Sigma(Q,\alpha) &= \{\sigma \in \Z^{Q_0}\ |\ \SI(Q,\alpha)_{m\sigma} \neq 0 \text{ for some $m > 0$}\} \label{Eq:c-original}\\
&= \{\sigma \in \Z^{Q_0}\ |\ \SI(Q,\alpha)_{\sigma} \neq 0 \} \label{Eq:c-saturation} \\
&= \{\sigma \in \Z^{Q_0} \ |\ \alpha \text{ is $\sigma$-semistable}\}. \label{Eq:c-semistable} \\
& = \{\sigma \in \Z^{Q_0}\ |\ \sigma(\alpha) = 0 \text{ and } \sigma(\beta) \leq 0 \ \forall\ \beta \hookrightarrow \alpha\}. \label{Eq:c-king}
\end{align}

The equivalence of Equation~(\ref{Eq:c-original}) and Equation~(\ref{Eq:c-saturation}) follows from saturation, see \cite[Theorem~3]{DW-LR}. Given that, the equivalence of Equation~(\ref{Eq:c-original}) and Equation~(\ref{Eq:c-saturation}) with Equation~(\ref{Eq:c-semistable}) and Equation~(\ref{Eq:c-king}) is essentially due to King \cite{King}.

For a weight $\sigma \in \Z^{Q_0}$, we also define
$$
\overline{\Sigma}(Q,\sigma) := \{\alpha \in \Z_{\geq 0}^{Q_0}\ |\ \alpha \text{ is $\sigma$-semistable}\}.
$$

Suppose $\sigma \in \Z^{Q_0}$ such that $\overline{\Sigma}(Q,\sigma)$ is non-empty. Then there exists a dimension vector $\beta \in \Z_{\geq 0}^{Q_0}$ such that $\sigma(\gamma) = -\left<\gamma,\beta\right>$ for all $\gamma \in \R^{Q_0}$, see \cite[Theorem~1]{DW-LR}. Thus, we have
\begin{align*}
\overline{\Sigma}(Q,\sigma) & = \{\alpha \in \Z_{\geq 0}^{Q_0}\ |\ \alpha \text{ is $\sigma$-semistable}\} \\
&= \{\alpha \in \Z_{\geq 0}^{Q_0}\ |\ \alpha \text{ is $(-\left<-,\beta\right>$)-semistable}\} \\
&= \{\alpha \in \Z_{\geq 0}^{Q_0}\ |\ \beta \text{ is $\left<\alpha,-\right>$-semistable}\},
\end{align*}

The last equality follows from the reciprocity property, see \cite[Corollary~1]{DW-LR}. From the above, it is easy to see that $\overline{\Sigma}(Q,\sigma)$ is just $\Sigma(Q,\beta)$ up to a change of coordinates that we now make precise. Let $\zeta_Q: \R^{Q_0} \rightarrow \R^{Q_0}$ that is defined as follows: For $\delta \in \R^{Q_0}$, let $\zeta_Q(\delta) \in \R^{Q_0}$ be defined by 
\begin{align*}
\zeta_Q(\delta) \cdot \gamma = \sum_{x \in Q_0} \zeta_Q(\delta)_x \gamma_x = \left<\delta,\gamma\right> \text{ for } \gamma \in \R^{Q_0}.
\end{align*}
Then, by the above discussion, we conclude that 
\begin{align}
\overline{\Sigma}(Q,\sigma) = \zeta_Q (\Sigma(Q,\beta)).
\end{align}

In particular, this means that $\overline{\Sigma}(Q,\sigma)$ is also (the integer points of)
a convex polyhedral cone and we can write down inequalities that define it in terms of the subrepresentations of $\beta$. For the star quiver, we define a polytope that carries the same information as the cone. 

\begin{definition}[Polytope from slice of the cone]\label{def:polytope} Consider the polyhedral cone $\Sigma(\Delta[k], \beta)$ of effective weights for the star quiver $\Delta[k]$ with dimension vector $\beta$. We may instead study the polytope 
$$\Sigma_k(\beta) := \{x \in \Sigma(\Delta[k], \beta): x_{1} = 1\},$$ the set of weights with first coordinate equal to $1$. We have $\Sigma(\Delta[k], \beta) = \R_{\geq 0} \cdot  \Sigma_k(\beta)$ because the origin is the only element of $\Sigma(\Delta[k], \beta)$ with vanishing first coordinate.
\end{definition}

\subsection{A polynomial time algorithm for detecting $\sigma$-semistability for star quivers}

In order to give a polynomial time algorithm for detecting $\sigma$-semistability for star quivers, we reduce the problem to deciding if a (generalized) Littlewood-Richardson coefficient is non-zero. We first state the two main results of this section. 

\begin{theorem} \label{thm:ss-poly-time}
Consider the dimension vector $\alpha = (p,q_1,\dots,q_k)$ of the star quiver $\Delta[k]$ and let $\sigma = (\sigma_x,\sigma_{y_1},\dots,\sigma_{y_k})$. Then, we can decide if $\alpha$ is $\sigma$-semistable, i.e. if $\sigma \in \Sigma(\Delta[k], \alpha)$, in poly$(p,k)$-time.
\end{theorem}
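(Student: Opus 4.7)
The plan is to identify $\dim \SI(\Delta[k], \alpha)_\sigma$ with a generalized Littlewood-Richardson coefficient indexed by rectangular partitions, and then invoke known polynomial-time algorithms for LR positivity based on the Knutson-Tao saturation theorem. By the equivalence of \eqref{Eq:c-saturation} and \eqref{Eq:c-semistable}, which is itself a consequence of Derksen-Weyman saturation, $\sigma$-semistability of $\alpha$ is the same as non-vanishing of $\SI(\Delta[k], \alpha)_\sigma$. Without loss of generality one may assume $\sigma_x \leq 0$ and $\sigma_{y_i} \geq 0$ for all $i$: otherwise $\SI(\Delta[k], \alpha)_\sigma = 0$ by inspection of the bi-grading of the polynomial ring $\C[\Rep(\Delta[k], \alpha)]$ generated by the matrix entries of the $B_i$, on which $\GL_p$ acts with non-negative $\det$-weights and each $\GL_{q_i}$ with non-positive $\det$-weights. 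Writing $n := -\sigma_x$, the relation $\sigma(\alpha) = 0$ becomes $np = \sum_i \sigma_{y_i} q_i$.

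To reduce $\dim \SI(\Delta[k], \alpha)_\sigma$ to a Littlewood-Richardson coefficient, I would apply Cauchy's formula to decompose
\[
\C[\Rep(\Delta[k], \alpha)] \;\cong\; \bigotimes_{i=1}^k \operatorname{Sym}\bigl((\C^p)^* \otimes \C^{q_i}\bigr) \;\cong\; \bigoplus_{(\lambda^{(i)})}\; \bigotimes_{i=1}^k \Bigl(S_{\lambda^{(i)}}(\C^p)^* \otimes S_{\lambda^{(i)}}\C^{q_i}\Bigr),
\]
and isolate the $\sigma$-isotypic component. Each constraint $S_{\lambda^{(i)}} \C^{q_i} \cong \det^{\sigma_{y_i}}$ (as $\GL_{q_i}$-modules) forces $\lambda^{(i)}$ to be the rectangular partition $(\sigma_{y_i}^{q_i})$, and the $\GL_p$-constraint then asks for the multiplicity of $\det^{\sigma_x}$ inside $\bigotimes_i S_{(\sigma_{y_i}^{q_i})}(\C^p)^*$. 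After dualizing and using $\det^n \cong S_{(n^p)} \C^p$, this multiplicity equals the generalized Littlewood-Richardson coefficient
\[
\dim \SI(\Delta[k], \alpha)_\sigma \;=\; c^{(n^p)}_{(\sigma_{y_1}^{q_1}),\,\ldots,\,(\sigma_{y_k}^{q_k})},
\]
i.e., the multiplicity of $S_{(n^p)} \C^p$ in $\bigotimes_i S_{(\sigma_{y_i}^{q_i})} \C^p$.

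It then remains to decide positivity of this LR coefficient in $\poly(p,k)$ time. For two factors, $c^\nu_{\mu^1, \mu^2} > 0$ is known to be in P: by the Knutson-Tao saturation theorem, positivity is equivalent to non-emptiness of the hive polytope, a rational polytope with $\poly(p)$ facets, which is detected by linear programming. The $k$-factor case iterates via intermediate partitions: $c^\nu_{\mu^1, \ldots, \mu^k} > 0$ iff there exist $\lambda_2, \ldots, \lambda_{k-1}$ with $c^{\lambda_2}_{\mu^1, \mu^2} > 0$, $c^{\lambda_{j+1}}_{\lambda_j, \mu^{j+1}} > 0$ for each intermediate $j$, and $c^{\nu}_{\lambda_{k-1}, \mu^k} > 0$; stacking the corresponding hive inequalities yields a polytope of total size $\poly(p, k)$ whose non-emptiness is decidable by a single linear program in $\poly(p, k)$ time (with saturation lifting a rational solution to an integer one). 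The main subtlety is the representation-theoretic step: one must carefully track the duality between $\C^p$ and $(\C^p)^*$ in Cauchy's formula and verify that the $\prod_i \GL_{q_i}$-weight condition really isolates only rectangular Schur functors so that the target partition comes out as exactly $(n^p)$ rather than a translate. Once this bookkeeping is settled, the $\poly(p, k)$ bound is immediate from the twin saturation theorems---Derksen-Weyman for $\SI$ and Knutson-Tao for LR---combined with the LP analysis of the hive polytope.
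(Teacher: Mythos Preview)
Your approach is essentially the paper's: reduce $\dim\SI(\Delta[k],\alpha)_\sigma$ to a generalized Littlewood--Richardson coefficient with rectangular arguments via Cauchy's identity (this is exactly Proposition~\ref{prop:ss-LR}), then decide positivity by stacking the hive/LR polytopes across the $k$ factors with intermediate partitions as auxiliary variables, invoking saturation to pass between integral and rational feasibility. The paper does precisely this, citing \cite{mulmuley2012geometric} for the $k=2$ polytope and building the $k$-factor polytope by induction.

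There is one genuine gap in your write-up, and it is where the paper spends most of its effort. You assert that the stacked polytope has ``total size $\poly(p,k)$'' and that its non-emptiness is therefore decidable in $\poly(p,k)$ time, but the right-hand side of the hive inequalities involves the entries of $\sigma$, whose magnitudes are not bounded in terms of $p$ and $k$. A generic LP solver would thus run in time polynomial in $p$, $k$, and the bit-length of $\sigma$, which is weaker than the stated bound. The paper closes this gap by tracking, through the induction, that the constraint matrix $A_k$ has entries in $\{0,-1,1\}$ (only the right-hand side $b^\nu_{\lambda^{(1)},\ldots,\lambda^{(k)}}$ depends on the partitions, and linearly with $\{0,\pm 1\}$ coefficients); this allows one to invoke Tardos's strongly polynomial LP algorithm \cite{tardos1986strongly}, which is what actually delivers the $\poly(p,k)$ running time independent of $\sigma$. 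Your ``main subtlety'' paragraph flags the representation-theoretic bookkeeping, but that part is routine; the real subtlety is the combinatorial structure of the LP coefficients.
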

Throughout this section, polynomial time will mean polynomial in $p$ and $k$.

%\VM{@Cole: I'm leaving it to you to fill in what exactly the run-time is polynomial in. Is it simply poly(p,n,k)-time? Or something stronger? Not just in the above theorem, but throughout this section} \CF{yes. I've updated accordingly - except I am confused about $n$ vs $k$. Shouldn't they be the same thing?}

\begin{proposition} \label{prop:ss-LR}
Consider the dimension vector $\alpha = (p,q_1,\dots,q_k)$ of the star quiver $\Delta[k]$ and let $\sigma = (\sigma_x,\sigma_{y_1},\dots,\sigma_{y_n})$ be a weight such that $\sigma_x \leq 0$ and $\sigma_{y_i} \geq 0$ for all $1 \leq i \leq k$. Then $\alpha$ is $\sigma$-semistable, i.e. $\sigma \in \Sigma(\Delta[k], \alpha)$, if and only if the Littlewood-Richardson coefficient $c_{(\sigma_{y_1})^{q_1},\dots (\sigma_{y_k})^{q
_k}}^{(-\sigma_x)^p}$ (as defined below) is non-zero.
\end{proposition}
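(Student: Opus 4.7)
The plan is to compute the dimension of the $\sigma$-weight space $\SI(\Delta[k], \alpha)_\sigma$ explicitly and to show it equals the Littlewood--Richardson coefficient $c^{(-\sigma_x)^p}_{(\sigma_{y_1})^{q_1}, \ldots, (\sigma_{y_k})^{q_k}}$. By the equivalence of (\ref{Eq:c-saturation}) and (\ref{Eq:c-semistable}), this suffices: non-vanishing of $\SI(\Delta[k], \alpha)_\sigma$ is the same as $\alpha$ being $\sigma$-semistable. The strategy is therefore a concrete decomposition of the coordinate ring of $\Rep(\Delta[k], \alpha)$ as a $\GL(\alpha)$-module, followed by isolating the correct isotypic component.

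To carry this out, I would apply Cauchy's formula to each summand of $\Rep(\Delta[k], \alpha) = \bigoplus_{i=1}^k \Mat_{p, q_i}$:
$$\C[\Mat_{p, q_i}] \;=\; \bigoplus_{\lambda^{(i)}} S_{\lambda^{(i)}}\bigl((\C^p)^*\bigr) \otimes S_{\lambda^{(i)}}(\C^{q_i}),$$
where the sum ranges over partitions $\lambda^{(i)}$ with at most $\min(p, q_i)$ parts. Taking the tensor product over $i$ expresses the full polynomial ring as a direct sum of irreducible $\GL_p \times \prod_i \GL_{q_i}$-modules indexed by tuples $(\lambda^{(1)}, \ldots, \lambda^{(k)})$, with isotypic components
$$\Bigl(\bigotimes_i S_{\lambda^{(i)}}((\C^p)^*)\Bigr) \;\otimes\; \bigotimes_i S_{\lambda^{(i)}}(\C^{q_i}).$$

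Next, I would isolate the $\sigma$-weight space. Because $\GL_{q_i}$ acts on semi-invariants of weight $\sigma$ by the character $\det^{\sigma_{y_i}}$ (after accounting for the $h_i^{-1}$ in the group action on $\Mat_{p,q_i}$), the factor $S_{\lambda^{(i)}}(\C^{q_i})$ must agree with this character. A Schur functor is a power of the determinant exactly when the partition is rectangular of height equal to the dimension of the underlying space, which forces $\lambda^{(i)} = (\sigma_{y_i})^{q_i}$; this is a valid partition precisely because $\sigma_{y_i} \geq 0$. The remaining $\GL_p$-part of the $\sigma$-isotypic component is $\bigotimes_i S_{(\sigma_{y_i})^{q_i}}((\C^p)^*)$, and extracting from this the $\det^{\sigma_x}$-part is, via the natural isomorphism $S_\mu(V^*) \cong S_\mu(V)^*$, the same as finding the multiplicity of $\det^{-\sigma_x} = S_{(-\sigma_x)^p}(\C^p)$ inside $\bigotimes_i S_{(\sigma_{y_i})^{q_i}}(\C^p)$; this multiplicity is, by definition, the generalized Littlewood--Richardson coefficient $c^{(-\sigma_x)^p}_{(\sigma_{y_1})^{q_1}, \ldots, (\sigma_{y_k})^{q_k}}$.

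The main obstacle I anticipate is the careful bookkeeping of sign conventions and dualities: the $h_i^{-1}$ in the group action induces a dualization when passing to coordinate functions, and one must verify that the hypotheses $\sigma_x \leq 0$ and $\sigma_{y_i} \geq 0$ ensure that $(-\sigma_x)^p$ and $(\sigma_{y_i})^{q_i}$ are genuine partitions, with the weight condition $\sigma \cdot \alpha = 0$ reflected in the size identity $-p\sigma_x = \sum_i q_i \sigma_{y_i}$ needed for the LR coefficient not to vanish trivially. A secondary point is the validity of the equivalence between (\ref{Eq:c-original}) and (\ref{Eq:c-saturation}), which ultimately rests on Knutson--Tao saturation (or equivalently on Derksen--Weyman's saturation theorem for quiver semi-invariants) and is what justifies that a single Littlewood--Richardson coefficient (rather than a stretched version) controls $\sigma$-semistability.
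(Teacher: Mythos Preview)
Your proposal is correct and follows essentially the same approach as the paper: both decompose $\C[\Rep(\Delta[k],\alpha)]$ via Cauchy's identity on each $\Mat_{p,q_i}$, identify the $\sigma$-isotypic component as the multiplicity of $S_{(-\sigma_x)^p}(\C^p)$ in $\bigotimes_i S_{(\sigma_{y_i})^{q_i}}(\C^p)$, and then invoke the saturation-based equivalence of (\ref{Eq:c-saturation}) and (\ref{Eq:c-semistable}). The only cosmetic difference is ordering---the paper first expands the $\GL_p$ tensor product via LR coefficients and then reads off the isotypic component, whereas you first pin down the $\GL_{q_i}$ factors and then extract the $\GL_p$ multiplicity---but the content is identical, and your remarks on the sign/duality bookkeeping are on point.
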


First, we need to define Littlewood-Richardson coefficients rigorously. Consider the algebraic group $G = \GL_m = \GL(\C^m)$. Let $\Lambda = \Lambda_m = \{\lambda \in \Z^m\ |\ \lambda_1 \geq \lambda_2 \geq \dots \geq \lambda_m\}$. Irreducible representations of $\GL_m$ are naturally indexed by $\Lambda$, which is often called the dominant Weyl chamber. For each $\lambda \in \Lambda$, there is an irreducible representation $S_\lambda(\C^m)$ of $\GL_m$. We call $\lambda$ the highest weight for $S_\lambda(\C^m)$.

%The irreducible representations (over $\C$) of $G$ are indexed by their highest weights. For  $\lambda \in \Z^{\dim V}$ . For each partition $\lambda = \lambda_1 \lambda_2 \dots \lambda_k$, we denote by $S_\lambda(V)$ the corresponding irreducible representation, see \cite{Fulton} for a definition. We use the convention in \cite{Fulton}, so $S_{(n)}(V) = \Sym^n(V)$, the $n^{th}$ symmetric power and $S_{1^n}(V) = \bigwedge^n(V)$ is the $n^{th}$ exterior power.

For $\lambda^{(1)}, \dots, \lambda^{(k)} \in \Lambda$, we decompose the tensor product as a direct sum of irreducibles
\begin{equation} \label{Eq:def-LR}
S_{\lambda^{(1)}}(\C^m) \otimes S_{\lambda^{(2)}}(\C^m) \otimes \dots \otimes S_{\lambda^{(k)}} (\C^m) = \bigoplus_{\mu} S_\mu(\C^m)^{\oplus \left(c_{\lambda^{(1)}, \dots, \lambda^{(k)}}^\mu\right)}. 
\end{equation}

The multiplicities $c_{\lambda^{(1)}, \dots, \lambda^{(k)}}^\mu$ are called (generalized) Littlewood-Richardson coefficients. In literature, often Littlewood-Richardson coefficients refer to the case when $k = 2$, i.e., the multiplicites obtained by decomposing a tensor product of two irreducibles.

For $\lambda = (\lambda_1,\dots,\lambda_m) \in \Lambda$, we define $\widehat{\lambda} = (-\lambda_m,-\lambda_{m-1},\dots,-\lambda_1) \in \Lambda$. And we note that
\begin{align} \label{Eq:dual-Schur}
   (S_\lambda(\C^m))^* = S_\lambda((\C^m)^*) = S_{\widehat{\lambda}}(\C^m) 
\end{align}

%\VM{Ok, this takes too much effort to introduce}

For the product of general linear groups $\GL(\alpha) = \GL_p \times \GL_{q_1} \times \dots \times \GL_{q_k}$, the irreducible representations are all of the form 
$$
E_{\lambda^{(x)}, \lambda^{(y_1)}, \dots, \lambda^{(y_k)}} = S_{\lambda^{(x)}}(\C^p) \otimes \bigotimes_i S_{\lambda^{(y_i)}}(\C^{q_i}).
$$

One observes that $\SI(\Delta[k],\alpha)_\sigma$ is precisely the isotypic component in $\C[\Rep(\Delta[k],\alpha)]$ corresponding to $E_{(\sigma_x)^p, (\sigma_{y_1})^{q_1}, \dots, (\sigma_{y_k})^{q_k}}$. To do this, let us decompose $\C[\Rep(\Delta[k],\alpha)]$ into irreducible representations for the action of $\GL(\alpha)$. 

We define $\Lambda_m^+ = \Lambda_m \cap \Z_{\geq 0}^m$. We observe that $\Rep(\Delta[k],\alpha) = \bigoplus_{i=1}^k (\C^p) \otimes (\C^{q_i})^*$. Thus, we have 
\begin{align*}
\C[\Rep(\Delta[k],\alpha)] &= {\rm Sym} (\Rep(\Delta[k],\alpha)^*)\\
&= \bigotimes_{i=1}^k {\rm Sym} ((\C^p)^* \otimes \C^{q_i}))\\
& = \bigoplus_{\lambda^{(y_i)} \in \Lambda_{y_i}^+} \left( \bigotimes_{i=1}^k S_{\lambda^{(y_i)}}(\C^p)^* \otimes S_{\lambda^{(y_i)}}(\C^{q_i})\right) \\
&= \bigoplus_{\lambda^{(y_i)} \in \Lambda_{y_i}^+} \left( \bigotimes_{i=1}^k S_{\lambda^{(y_i)}}(\C^p)^*\right) \otimes \left(\bigotimes_{i=1}^k S_{\lambda^{(y_i)}}(\C^{q_i})\right) \\
& = \bigoplus_{\lambda^{(y_i)} \in \Lambda_{y_i}^+} \left( \bigoplus_{\mu} S_{\widehat{\mu}}(\C^p)^{\oplus \left(c_{\lambda^{(y_1)}, \dots, \lambda^{(y_k)}}^\mu\right)} \right) \otimes \left(\bigotimes_{i=1}^k S_{\lambda^{(y_i)}}(\C^{q_i})\right) \\
& = \bigoplus_{\mu, \lambda^{(y_i)} \in \Lambda_{y_i}^+} (E_{\widehat{\mu}, \lambda^{(y_1)}, \dots, \lambda^{(y_k)}})^{\oplus \left( c_{\lambda^{(y_1)}, \dots, \lambda^{(y_k)}}^\mu \right) }
\end{align*}

The first equality follows from the fact that polynomial functions $\C[W]$ on any vector space $W$ is the same as ${\rm Sym}(W^*)$. The second equality is clear. The third equality follows from Cauchy's identity and the fourth equality is simply a regrouping of terms. The fifth equality is a consequence of the definition of Littlewood-Richardson coefficients, i.e., Equation~\ref{Eq:def-LR}, and Equation~\ref{Eq:dual-Schur}. The final equality is simply a rearrangement of terms.

\begin{proof} [Proof of Proposition~\ref{prop:ss-LR}]
From the above discussion, we observe that 
 $\SI(\Delta[k],\alpha)_\sigma$ is non-zero if and only if $c_{(\sigma_{y_1})^{q_1}, \dots, (\sigma_{y_k})^{q_k}}^{(-\sigma_x)^p}$ is non-zero because $\widehat{(-\sigma_x)^p} =(\sigma_x)^p$. Since $\SI(\Delta[k],\alpha)_\sigma \neq 0$ precisely when $\alpha$ is $\sigma$-semistable, we have the required conclusion.
\end{proof}

Now, we turn to proving Theorem~\ref{thm:ss-poly-time}. 

\begin{proof}[Proof of \cref{thm:ss-poly-time}]
From Proposition~\ref{prop:ss-LR}, we see that it suffices to show that the vanishing of (generalized) Littlewood-Richardson coefficients can be decided in poly-time. In the case $k = 2$, i.e., $c_{\lambda, \mu}^\nu$, a (strongly) polynomial time algorithm for deciding its vanishing was shown in \cite{mulmuley2012geometric} (another algorithm was given in \cite{bi-LR}). One can simply adapt this to get a poly-time algorithm for computing $c_{\lambda^{(1)}, \dots, \lambda^{(k)}}^\nu.$

To describe this, we must first describe the algorithm of \cite{mulmuley2012arxiv}. The authors show $c_{\lambda, \mu}^\nu$ vanishes if and only if the polyhedron $P_{\lambda, \mu}^\nu := \{x \in \QQ^{m_2}: A x \leq b_{\lambda, \mu}^\nu\}$ is empty, where $b_{\lambda, \mu}^\nu \in \QQ^{m_1}$ is a vector whose entries are homogeneous linear forms in $\lambda, \mu, \nu$ with coefficients in $\{0, -1, 1\}$ and $A \in \Mat_{m_1, m_2}(\{0, -1, 1\})$ is a matrix depending only on $n$ and not $\lambda, \mu, \nu$. The numbers $m_1, m_2$ are both $O(n^2)$. Thus deciding the vanishing of $c_{\lambda, \mu}^\nu$ can be decided in strongly polynomial time by the existence of strongly polynomial time algorithms for linear programs with coefficients in $\{0,-1,1\}$ \cite{tardos1986strongly}.

We claim that a similar statement holds for $c_{\lambda^{(1)}, \dots, \lambda^{(k)}}^\mu,$ namely that there is a vector $b_{\lambda^{(1)}, \dots, \lambda^{(k)}}^\nu \in \QQ^{m_1(k)}$ whose entries are linear forms in $\lambda^{(1)}, \dots, \lambda^{(k)}, \nu$ with coefficients in $\{0,-1,1\}$ and $A_k \in \Mat_{m_1(k), m_2(k)}(\{0, -1, 1\})$ depending only on $n$ and $k$ and not $\lambda^{(1)}, \dots, \lambda^{(k)}, \nu$ such that $c_{\lambda^{(1)}, \dots, \lambda^{(k)}}^\mu = 0$ if and only if the polyhedron $P_{\lambda^{(1)}, \dots, \lambda^{(k)}}^\nu := \{x \in \QQ^{m_2(k)}: A_k x \leq b_{\lambda^{(1)}, \dots, \lambda^{(k)}}^\nu\}$ is empty. Moreover, $m_1(k), m_2(k) = O(k n^2)$. This will imply that the vanishing of $c_{\lambda^{(1)}, \dots, \lambda^{(k)}}^\nu$ can also be decided in strongly polynomial time. We prove the claim by induction on $k\geq 2$; the base case is already established with $m_1 = m_1(2), m_2 = m_2(2), A_2 = A$. 

Now suppose $k \geq 3$. By saturation, $c_{\lambda^{(1)}, \dots, \lambda^{(k)}}^\nu > 0$ if and only if there exists $t \in \N$ such that $c_{t\lambda^{(1)}, \dots, t\lambda^{(k)}}^{t\nu} > 0$. By definition, $c_{t\lambda^{(1)}, \dots, t\lambda^{(k)}}^{t\nu} > 0$ if and only if there some partition $\alpha$ such that $c_{t\lambda^{(1)}, \dots, t\lambda^{(k-1)}}^\alpha > 0$ and $c_{t\lambda^{(k)}, \alpha}^{t\nu} > 0.$ That is, $c_{\lambda^{(1)}, \dots, \lambda^{(k)}}^\nu > 0$ if and only if there exists $t \in \N$, a non-increasing vector $\alpha \in \Z_{ \geq 0}^n$, and $x_1 \in \QQ^{m_2(k-1)}, x_2 \in \QQ^{m_2}$ such that $A_{k-1} x_1 \leq b_{t\lambda^{(1)}, \dots, t\lambda^{(k-1)}}^\alpha$ and $A x_2 \leq b_{t\lambda^{(k)}, \alpha}^{t\nu}.$ As the entries of $b_{\lambda^{(1)}, \dots, \lambda^{(k-1)}}^\alpha, b_{\lambda^{(k)}, \alpha}^\nu$ are homogeneous linear forms in the partitions, this is true if and only if there exists $t \in \N$, a non-increasing vector $\alpha \in \Z_{ \geq 0}^n$, and $x_1 \in \QQ^{m_2(k-1)}, x_2 \in \QQ^{m_2}$ such that $A_{k-1} x_1 \leq b_{\lambda^{(1)}, \dots, \lambda^{(k-1)}}^{\alpha/t}$ and $A x_2 \leq b_{\lambda^{(k)}, \alpha/t}^\nu.$ This is equivalent to the existence of a non-increasing vector $\alpha \in \QQ_{ \geq 0}^n$ and $x_1 \in \QQ^{m_2(k-1)}, x_2 \in \QQ^{m_2}$ such that $A_{k-1} x_1 \leq b_{\lambda^{(1)}, \dots, \lambda^{(k-1)}}^{\alpha}$ and $A x_2 \leq b_{\lambda^{(k)}, \alpha}^\nu.$ This is then equivalent to the feasibility of a larger linear program, namely the nonemptiness of the polyhedron 
\begin{align*}
P' = \{(\alpha, x_1, x_2) \in \QQ^{n + m_2(k-1) + m_2}: A_{k-1} x_1 &\leq b_{\lambda^{(1)}, \dots, \lambda^{(k-1)}}^{\alpha}, \\
A x_2 &\leq b_{\lambda^{(k)}, \alpha}^\nu,\\
 \alpha_{i + 1} &\leq \alpha_{i}\; \forall i \in [n-1]\\
 \alpha_i & \geq 0;  \forall i \in [n]
 \}.\end{align*}
One checks that the matrix $A_k$ and vector $b_{\lambda^{(1)}, \dots, \lambda^{(k)}}^\nu\}$ such that 
$$P' = \{y \in \QQ^{ m_2(k)}: A_k y \leq b_{\lambda^{(1)}, \dots, \lambda^{(k)}}^\nu\}$$ have the desired properties. Namely, we may $m_2(k) = m_2(k-1) + m_2 + n$ and $m_1(k) = m_1(k-1) + m_1 + n^2 + n$ which, by induction, are both $O(k n^2)$. Moreover, because $A_{k-1}$ and $A$ have entries in $\{0, -1, 1\}$ and $b_{\lambda^{(1)}, \dots, \lambda^{(k-1)}}^\alpha, b_{\lambda^{(k)}, \alpha}^\nu$ are homogeneous linear forms with coefficients in $\{0, -1, 1\}$, $A_k$ has entries in $\{0,-1,1\}$ and $b_{\lambda^{(1)}, \dots, \lambda^{(k)}}^\nu$ consists of homogeneous, linear forms with coefficients in $\{0,-1,1\}$. This completes the proof. \end{proof}
As a corollary, the polyhedra $\Sigma_k(\beta), \Sigma(\Delta[k], \beta)$ of \cref{Eq:Sigma}, \cref{def:polytope} are \emph{well-described}: 
\begin{corollary}\label{cor:opt-oracle}The polyhedra $\Sigma(\Delta[k],\beta), \Sigma_k(\beta)$ are \emph{well-described}, i.e. it is given by a finite collection of rational inequalities, each of polynomial encoding length. Moreover, there is a polynomial time algorithm to optimize linear functions over $\Sigma_k(\beta)$.
\end{corollary}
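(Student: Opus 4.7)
The plan is to combine the polyhedral description in \eqref{Eq:c-king} with the polynomial-time membership oracle of Theorem \ref{thm:ss-poly-time}, and then invoke the Gr\"otschel-Lov\'asz-Schrijver equivalence of strong membership and strong optimization for well-described polyhedra.

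\emph{Well-describedness.} By \eqref{Eq:c-king}, $\Sigma(\Delta[k],\beta)$ is cut out by the equation $\sigma(\beta)=0$ together with the inequalities $\sigma(\gamma)\leq 0$ ranging over dimension vectors $\gamma\hookrightarrow\beta$. Every such $\gamma$ satisfies $0\leq\gamma(v)\leq\beta(v)$ componentwise, so each inequality has integer coefficients of encoding length polynomial in that of $\beta$. This gives a polynomial facet complexity. The polytope $\Sigma_k(\beta)$ is a single-hyperplane slice of this cone and hence inherits the same bound.

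\emph{Polynomial-time optimization.} By \eqref{Eq:c-semistable}, membership in $\Sigma(\Delta[k],\beta)$ coincides with deciding $\sigma$-semistability of $\beta$, which by Theorem \ref{thm:ss-poly-time} can be done in $\poly(p,k)$-time. Since the polyhedron is well-described and the origin is a known rational point in it, the Gr\"otschel-Lov\'asz-Schrijver equivalence yields a polynomial-time strong optimization algorithm over $\Sigma(\Delta[k],\beta)$, and hence over $\Sigma_k(\beta)$ by imposing the additional equation $x_1=1$. As a sanity check, one can also extract a direct polynomial-size extended LP formulation from the proof of Theorem \ref{thm:ss-poly-time}: that proof shows $\sigma\in\Sigma(\Delta[k],\beta)$ is equivalent to the feasibility of $A_k y\leq B\sigma$, where $A_k\in\Mat(\{0,\pm1\})$ and $B$ has integer entries of size $O(\max(p,q_i))$, both of polynomial dimension.

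The main subtlety is that the membership oracle of Theorem \ref{thm:ss-poly-time} is obtained via Proposition \ref{prop:ss-LR}, which is stated only under the sign hypothesis $\sigma_x\leq 0$, $\sigma_{y_i}\geq 0$. To extend to arbitrary $\sigma\in\QQ^{Q_0}$, one first checks the universally valid facet inequalities $\sigma(\epsilon_x)\leq 0$ (from $\epsilon_x\hookrightarrow\beta$, which holds for all $\beta$ with $p\geq 1$) and, when $q_i>p$, $\sigma(\epsilon_{y_i})\leq 0$; a violation returns non-membership with an explicit violated facet. In the remaining cases, where $q_i\leq p$ and some $\sigma_{y_i}<0$, a reflection functor of the type in Proposition \ref{prop:r_y-stability-preserve} conjugates the instance into the sign region with only polynomial blow-up, after which Theorem \ref{thm:ss-poly-time} applies as stated. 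Once this combinatorial preprocessing is in place, the GLS machinery assembles everything into the desired polynomial-time optimization algorithm.
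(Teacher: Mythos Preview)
Your well-describedness argument via King's description \eqref{Eq:c-king} is correct and in fact cleaner than the paper's route through the Horn polytope. The paper instead observes that $\Sigma(\Delta[k],\beta)$ is the linear preimage of the Horn cone, which in turn is a coordinate projection of the explicit polynomial-size polyhedron $Q$ built in the proof of Theorem~\ref{thm:ss-poly-time}; well-describedness then descends along projections and preimages.

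There is, however, a gap in your optimization argument. The Gr\"otschel--Lov\'asz--Schrijver equivalence you invoke (Theorem 6.4.1 in \cite{GLS-book}) is between strong \emph{separation} and strong optimization, not between membership and optimization; a yes/no membership oracle does not by itself produce a violated inequality. What actually closes the argument is precisely your ``sanity check'': the proof of Theorem~\ref{thm:ss-poly-time} gives an explicit polynomial-size extended formulation $\{\exists y:A_k y\le b_\sigma\}$ with $b_\sigma$ linear in $\sigma$, so $\Sigma_k(\beta)$ is a projection of an explicitly presented polyhedron, over which one can optimize (and hence separate) directly. This is exactly the paper's approach, so you should promote that line to the main argument rather than treating it as an afterthought. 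Finally, your third paragraph is unnecessary: for any sincere $\beta$ the sign constraints $\sigma_x\le 0$ and $\sigma_{y_i}\ge 0$ are themselves valid inequalities of $\Sigma(\Delta[k],\beta)$ (coming from the generic subrepresentations $\epsilon_x\hookrightarrow\beta$ and $\beta-q_i\epsilon_{y_i}\hookrightarrow\beta$), so any $\sigma$ violating them is rejected outright and no reflection functor is needed.
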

\begin{proof} Consider the Horn polytope $P$, the rational cone generated by partitions $(\lambda^{(1)}, \dots, \lambda^{(k)}, \nu)$ such that $c_{\lambda^{(1)}, \dots, \lambda^{(k)}}^\nu > 0$. By \cref{prop:ss-LR}, the cone $\Sigma(\Delta[k], \beta)$ is a preimage of $P$ under a linear map with Boolean matrix entries in the coordinate bases. $\Sigma_k(\beta)$ an intersection of $\Sigma(\Delta[k], \beta)$ with the coordinate plane $x_1 = 1$, and hence is well-described if $\Sigma_k(\beta)$ is. Therefore it is enough to show that $P$ is well-described. We now argue that this is the case.

Indeed, for rational $(\lambda^{(1)}, \dots, \lambda^{(k)}, \nu)$, we saw in the proof of \cref{thm:ss-poly-time} that there are vectors whose entries are linear forms in $\lambda^{(1)}, \dots, \lambda^{(k)}, \nu$ with coefficients in $\{0,-1,1\}$ and a Boolean matrix $A_k$ such that $y = (\lambda^{(1)}, \dots, \lambda^{(k)}, \nu) \in P$ if and only if the polyhedron $P_{\lambda^{(1)}, \dots, \lambda^{(k)}}^\nu := \{x \in \QQ^{m_2(k)}: A_k x \leq b_{\lambda^{(1)}, \dots, \lambda^{(k)}}^\nu\}$ is nonempty. This implies $P$ is a projection of a well-described polyhedron $Q$, and hence $P$ is well-described. This can be seen in several ways, e.g. Fourier-Motzkin elimination. 

It remains to be seen why there is a polynomial time algorithm to optimize linear functions over $\Sigma_k(\beta)$. By Theorem 6.4.1 in \cite{GLS-book}, strong optimization and separation are polynomially equivalent for well-described polyhedra, so it is enough to show that $\Sigma_k(\beta)$ has a polynomial time separation oracle.
%, which in turn follows from the existence of a polynomial time  separation oracle for $\Sigma(\Delta[k], \beta)$. 
By \cref{prop:ss-LR}, the cone $\Sigma_k(\beta)$ is a preimage of the slice $P':=P\cap\{\sigma:x_1 = 1\}$ of the Horn polytope under a linear map with Boolean matrix entries in the coordinate bases. 

Thus it suffices to show that $P'$ has a strong separation oracle. Recall that $P$ is a coordinate projection of $Q$, and that both are well-described. Thus $P'$ is a coordinate projection of a well-described coordinate slice $Q'$ of $Q$. As the inequalities of $Q$ and hence $Q'$ are given explicitly, $Q'$ has a strong separation oracle and hence a strong optimization oracle. As $P'$ is a projection of $Q'$, $P'$ has a strong optimization oracle as well. Hence $P'$ also has a strong separation oracle. This completes the proof.\end{proof}

\subsection{The Derksen-Weyman characterization of $\sigma$-stability}

The set of $\sigma$-semistable dimension vectors, i.e. $\overline{\Sigma}(Q,\sigma)$, is (the integer points of) a rational convex polyhedral cone and membership of dimension vectors in $\overline{\Sigma}(Q,\sigma)$ can be solved in polynomial time by Theorem~\ref{thm:ss-poly-time}. On the other hand, $\sigma$-stability of a dimension vector $\alpha$ is more subtle to understand and detect. The subset of dimension vectors in $\overline{\Sigma}(Q,\sigma)$ that are $\sigma$-stable usually lacks convexity. We say a dimension vector $\alpha \in \Z^{Q_0}$ is indivisible if ${\rm g.c.d.}(\alpha_x:x \in Q_0) = 1$.

\begin{lemma}
Let $Q$ be a quiver, $\alpha$ a dimension vector and $\sigma$ a weight. Suppose $\alpha$ generates an extremal ray of $\overline{\Sigma}(Q,\sigma)$ (in particular $\alpha$ is indivisible). Then 
$\alpha$ is $\sigma$-stable.
\end{lemma}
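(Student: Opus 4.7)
The plan is to combine the existence of a Jordan--Hölder filtration in the category of $\sigma$-semistable representations of slope zero with the extremality hypothesis. Since $\alpha \in \overline{\Sigma}(Q, \sigma)$, a generic $V \in \Rep(Q,\alpha)$ is $\sigma$-semistable, and $\sigma(\alpha) = 0$ automatically (as noted in the discussion following \cref{Eq:c-king}). Let $V$ be such a generic representation. By the standard theory of stability (essentially due to King and Rudakov), the full subcategory of $\sigma$-semistable representations with $\sigma(\underline{\dim}) = 0$ is an abelian subcategory of finite length whose simple objects are exactly the $\sigma$-stable representations. Therefore $V$ admits a Jordan--Hölder filtration whose successive quotients $W_1, \dots, W_l$ are $\sigma$-stable representations with $\sigma(\underline{\dim}\, W_i) = 0$.

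Set $\beta_i = \underline{\dim}\, W_i$, so that $\alpha = \sum_{i=1}^l \beta_i$. Each $W_i$ is in particular $\sigma$-semistable, and since $\sigma$-semistability is a Zariski-open condition on $\Rep(Q, \beta_i)$, the existence of a single $\sigma$-semistable representation of dimension $\beta_i$ forces the generic representation of dimension $\beta_i$ to be $\sigma$-semistable. Hence each $\beta_i \in \overline{\Sigma}(Q, \sigma)$.

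Now I invoke the extremality hypothesis: because $\alpha$ generates an extremal ray of the rational polyhedral cone $\overline{\Sigma}(Q, \sigma)$, any decomposition $\alpha = \sum_i \beta_i$ with $\beta_i \in \overline{\Sigma}(Q, \sigma) \setminus \{0\}$ must satisfy $\beta_i \in \R_{>0}\,\alpha$ for every $i$. Thus $\beta_i = c_i \alpha$ for some $c_i \in \mathbb{Q}_{>0}$. Since each $\beta_i$ is a nonzero vector in $\Z_{\geq 0}^{Q_0}$ and $\alpha$ is indivisible, each $c_i$ is a positive integer. The equality $\sum_i c_i \alpha = \alpha$ then forces $l = 1$ and $c_1 = 1$, so that $V = W_1$ is $\sigma$-stable. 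As $V$ was a generic representation of dimension $\alpha$, this shows that $\alpha$ is $\sigma$-stable.

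The main obstacle is just invoking the right piece of background: the JH filtration in the subcategory of semistable objects of a fixed slope is the key black box, and the rest is a straightforward extraction from extremality and indivisibility. One small care point worth noting is the distinction between a single representation being $\sigma$-(semi)stable and a dimension vector being $\sigma$-(semi)stable; the openness of semistability bridges these in the step where I conclude $\beta_i \in \overline{\Sigma}(Q, \sigma)$.
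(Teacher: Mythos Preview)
Your proof is correct and follows essentially the same route as the paper's. The paper invokes the notion of the \emph{$\sigma$-stable decomposition} from Derksen--Weyman (which records the dimension vectors of the Jordan--H\"older factors of a general $\sigma$-semistable representation), whereas you unpack this directly by taking a JH filtration of a generic $V$ and using openness of semistability to place each factor's dimension vector inside $\overline{\Sigma}(Q,\sigma)$; the extremality-plus-indivisibility step is then identical.
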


\begin{proof}
By definition of $\overline{\Sigma}(Q,\sigma)$, $\alpha$ is $\sigma$-semistable. There is a concept of $\sigma$-stable decomposition, i.e., for any dimension vector $\beta$ which is $\sigma$-semistable, we write $\beta = \gamma_1 \dotplus \gamma_2 \dotplus \dots \dotplus \gamma_k$ if a general representation of dimension $\beta$ has a Jordan-H\"older filtration (in the category of $\sigma$-semistable representations) whose factors are of dimensions $\gamma_1,\gamma_2,\dots,\gamma_k$ (in some order), see \cite[Section~3]{DW-comb}. Each of the $\gamma_i$ are $\sigma$-stable, hence $\sigma$-semistable and so $\gamma_i \in \overline{\Sigma}(Q,\sigma)$.

Now, since $\alpha$ generates an extremal ray, it cannot be written non-trivially as a sum of dimension vectors in $\overline{\Sigma}(Q,\sigma)$. Thus, its $\sigma$-stable decomposition must be trivial, i.e., $\alpha$ is $\sigma$-stable.
\end{proof}

The locus of $\sigma$-stable dimension vectors in $\overline{\Sigma}(Q,\sigma)$ apart from the extremal rays is more complicated, but is captured by the result below due to Derksen and Weyman, see \cite[Theorem~6.4]{DW-comb}. Suppose we have a sequence of dimension vectors $\underline{\delta} = \delta_1,\dots,\delta_t$ (all $\delta_i$ distinct), we define $G(\underline{\delta})$ to be the directed graph with vertex set $\{1,2,\dots,t\}$ and an edge from $i$ to $j$ if $\left<\delta_i,\delta_j\right> < 0$.  

\begin{theorem} [Derksen-Weyman] \label{thm:DW-stability}
Suppose $\alpha \in \overline{\Sigma}(Q,\sigma)$. Then there exists a linearly independent sequence $\underline{\delta} = \delta_1,\dots,\delta_t$ of dimension vectors that generate extremal vectors of $\overline{\Sigma}(Q,\sigma)$ such that $\alpha$ is a positive rational combination of $\delta_1,\dots,\delta_t$. The dimension vector $\alpha$ is $\sigma$-stable if and only if
\begin{enumerate}
    \item $\alpha = \delta_1$ and $\delta_1$ is a real Schur root.
    \item or $\left< \delta_i,\alpha\right> \leq 0$ and $\left<\alpha,\delta_i\right> \leq 0$ for all $i$ and $G(\underline{\delta})$ is path-connected and $\alpha$ is indivisible if it is isotropic.
\end{enumerate}
\end{theorem}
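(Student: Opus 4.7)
The plan is to invoke \cite[Theorem~6.4]{DW-comb} of Derksen and Weyman, since the statement is attributed to them. To indicate the main ingredients one would want to verify or use as black boxes, I sketch the structure below.

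For the first assertion (existence of the sequence $\delta_1,\dots,\delta_t$), since $\overline{\Sigma}(Q,\sigma)$ is a rational polyhedral cone and $\alpha \in \overline{\Sigma}(Q,\sigma)$, there is a unique face $F$ of this cone that contains $\alpha$ in its relative interior. I would let $\delta_1,\dots,\delta_t$ be a maximal linearly independent subset of the extremal ray generators of $F$. By a Carath\'eodory-type argument, $\alpha$ is then a positive rational combination of the $\delta_i$, and by the lemma preceding the theorem each $\delta_i$, generating an extremal ray, is automatically $\sigma$-stable.

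For the characterization, the ``if'' direction reduces to showing that a generic $\alpha$-dimensional representation can be produced as an iterated generic extension of $\sigma$-stable representations of dimensions $\delta_i$ and remains $\sigma$-stable. Case (1) is essentially immediate: a real Schur root is indivisible and the preceding lemma already gives $\sigma$-stability. For Case (2), the Euler form conditions $\left<\delta_i,\alpha\right>,\left<\alpha,\delta_i\right> \leq 0$ (together with $\sigma(\delta_i) = 0$) are used to rule out the appearance of a proper $\sigma$-semistable subrepresentation of a generic $\alpha$-dimensional representation; path-connectedness of $G(\underline{\delta})$ is used to guarantee that the generic extension does not split as a direct sum (so the would-be $\sigma$-stable decomposition of $\alpha$ is trivial); and the isotropic indivisibility hypothesis rules out the phenomenon from Theorem~\ref{thm:Schofield-can}, namely that an imaginary Schur root taken with multiplicity $>1$ decomposes canonically as a direct sum and therefore fails to be $\sigma$-stable. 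The ``only if'' direction goes by contrapositive: failure of the Euler form inequalities exhibits an $\alpha$-containing subrepresentation or quotient of improper dimension, failure of path-connectedness forces $\alpha$ to split under the $\sigma$-stable decomposition, and failure of indivisibility in the isotropic case again produces a nontrivial decomposition of a generic representation.

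The main obstacle in producing a self-contained proof is the careful analysis of generic extensions and the $\sigma$-stable decomposition machinery developed in \cite{DW-comb}, which requires substantial background beyond what we have set up here. For this reason the plan is to cite \cite[Theorem~6.4]{DW-comb} as a black box rather than reprove it; this is sufficient since the theorem is used in the sequel only to describe $\sigma$-stability in terms of extremal data of the polyhedral cone $\overline{\Sigma}(Q,\sigma)$, which our algorithmic results of \cref{sec:polytopal} already give us access to.
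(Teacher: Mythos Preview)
Your proposal is correct and matches the paper's own treatment: the paper does not prove this theorem at all but simply attributes it to Derksen and Weyman with the citation \cite[Theorem~6.4]{DW-comb}, exactly as you propose. Your additional sketch of the proof structure goes beyond what the paper provides, but the bottom line---citing the result as a black box---is identical.
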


For fixed $\delta_1,\dots, \delta_t$, the condition for $\alpha$ to be proportional to some element of the semistable locus is polyhedral. Thus we have the following. 
\begin{corollary}\label{cor:union-cones} Let $S$ be the locus of $\sigma$-stable dimension vectors in $\overline{\Sigma}(Q,\sigma)$. Then $\R_+ S = \{a s: a \in \R, s \in S\}$ is a finite union of convex cones.
\end{corollary}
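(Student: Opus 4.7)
The plan is to apply \cref{thm:DW-stability} together with the observation that the rational polyhedral cone $\overline{\Sigma}(Q,\sigma)$ has only finitely many extremal rays, say generated by indivisible integer vectors $v_1, \dots, v_N$. By \cref{thm:DW-stability}, every $\alpha \in S$ is associated with some linearly independent subsequence $\underline{\delta} = (\delta_1, \dots, \delta_t)$ of $(v_1, \dots, v_N)$ witnessing its $\sigma$-stability via case (1) or case (2). Since there are only finitely many subsequences of the finite set $\{v_1, \dots, v_N\}$, this yields a finite decomposition $S = \bigcup_{\underline{\delta}} S_{\underline{\delta}}$, where $S_{\underline{\delta}}$ collects those $\alpha \in S$ whose stability is certified by $\underline{\delta}$.

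For a fixed $\underline{\delta}$ (necessarily with $G(\underline{\delta})$ path-connected, else $S_{\underline{\delta}} = \emptyset$), the conditions on $\alpha$ from case (2) are: $\alpha$ lies in the relative interior of the rational polyhedral cone spanned by $\delta_1, \dots, \delta_t$; and the Euler pairings $\langle \delta_i, \alpha \rangle \leq 0$ and $\langle \alpha, \delta_i \rangle \leq 0$ hold for all $i$. These are homogeneous linear (in)equalities in $\alpha$ and together cut out a convex rational polyhedral cone. The remaining condition from case (2), that $\alpha$ be indivisible when isotropic, is a purely integrality constraint that evaporates upon taking $\R_+$; case (1) similarly contributes finitely many individual extremal rays, each trivially a convex cone. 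Therefore $\R_+ S$ is covered by the finite union of these convex cones.

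The main point requiring care is verifying that $\R_+ S_{\underline{\delta}}$ is exactly the convex cone defined by the homogeneous conditions and not merely a dense subset thereof. The rationality and homogeneity of the defining inequalities ensure that any real $\alpha$ in this cone admits a positive integer scalar multiple that lies in $\Z^{Q_0}$ and satisfies the original conditions; in the isotropic case one may further rescale to the indivisible representative. This is what promotes the statement of \cref{thm:DW-stability}, which concerns integer dimension vectors, to a statement about convex cones in $\R^{Q_0}$. The remainder of the argument is formal bookkeeping over the finitely many valid subsequences $\underline{\delta}$ and the extremal real Schur roots from case (1).
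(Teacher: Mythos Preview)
Your argument is essentially the paper's own one-sentence proof expanded in detail: for each fixed linearly independent tuple $\underline{\delta}$ of extremal generators, the conditions of \cref{thm:DW-stability} are homogeneous linear in $\alpha$, and there are only finitely many such tuples since $\overline{\Sigma}(Q,\sigma)$ has finitely many extremal rays.

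One small remark: the claim in your final paragraph that ``any real $\alpha$ in this cone admits a positive integer scalar multiple that lies in $\Z^{Q_0}$'' is false for irrational $\alpha$ (no real multiple of, say, $(\pi,1,\dots,1)$ is integral). This is not really a gap in your reasoning so much as an imprecision inherited from the corollary's own statement---taken literally, $\R_+ S$ consists only of rays with rational direction and so cannot fill a full-dimensional real cone. The intended content, which your argument does establish, is that the homogeneous conditions from \cref{thm:DW-stability} cut out finitely many convex polyhedral cones whose integer points (modulo the isotropic indivisibility caveat) comprise $S$.
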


\subsection{A polynomial time algorithm for detecting $\sigma$-stability for star quivers}

We turn to proving the following theorem:

\begin{theorem} \label{thm:stable-poly-time}
Let $\alpha$ be a dimension vector and $\sigma$ be a weight for the quiver $\Delta[k]$. Then, in polynomial time, we can decide if $\alpha$ is $\sigma$-stable.
\end{theorem}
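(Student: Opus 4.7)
The plan is to combine the semistability algorithm of Theorem~\ref{thm:ss-poly-time} with the Derksen--Weyman characterization (Theorem~\ref{thm:DW-stability}): after an initial semistability test we compute an extremal-ray decomposition of $\alpha$ inside $\overline{\Sigma}(\Delta[k], \sigma)$ and then verify a handful of straightforward numerical conditions on that decomposition. First I would apply Theorem~\ref{thm:ss-poly-time} to decide whether $\alpha \in \overline{\Sigma}(\Delta[k], \sigma)$; if not, $\alpha$ is not $\sigma$-stable. Otherwise $\overline{\Sigma}(\Delta[k], \sigma)$ is non-empty, so there is an explicit dimension vector $\beta$ (immediate from the Euler form of $\Delta[k]$: $\beta = (-\sigma_x,\,-\sigma_x - \sigma_{y_1},\,\dots,\,-\sigma_x - \sigma_{y_k})$) satisfying $\sigma(\gamma) = -\langle \gamma, \beta\rangle$. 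Corollary~\ref{cor:opt-oracle} then supplies a polynomial-time strong separation/optimization oracle for the cone $C := \overline{\Sigma}(\Delta[k], \sigma) \subset \R^{k+1}$.

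Next I would build a decomposition $\alpha = \sum_{i=1}^t c_i \delta_i$ with $c_i > 0$ and $\delta_i$ extremal rays of $C$ by a peeling procedure. Set $\alpha_0 := \alpha$; at iteration $i \geq 1$: (a) compute the linear span of the minimal face $F_{i-1} \subseteq C$ containing $\alpha_{i-1}$, using the standard polynomial-time procedures for well-described polyhedra (Gr\"otschel--Lov\'asz--Schrijver); (b) optimize a generic rational linear functional over the polytope $F_{i-1} \cap \{x_1 = 1\}$ to produce a vertex whose supporting ray is an extremal ray of $F_{i-1}$ (hence of $C$), and rescale to its indivisible integral representative $\delta_i$; (c) solve the one-dimensional LP $c_i := \max\{c \geq 0 : \alpha_{i-1} - c \delta_i \in C\}$ and put $\alpha_i := \alpha_{i-1} - c_i \delta_i$. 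Since $\alpha_{i-1}$ lies in the relative interior of $F_{i-1}$ we have $c_i > 0$, and by maximality of $c_i$ the point $\alpha_i$ lies on a proper face of $F_{i-1}$; hence $F_i \subsetneq F_{i-1}$ and the recursion terminates in at most $k+1$ steps with $\alpha_t = 0$.

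Linear independence of the $\delta_i$'s is then automatic: the standard face identity $C \cap \mathrm{span}(F_i) = F_i$ combined with the minimality of $F_{i-1}$ as the face through $\alpha_{i-1}$ forces $\delta_i \notin \mathrm{span}(F_i)$, while $\delta_j \in F_i \subseteq \mathrm{span}(F_i)$ for all $j > i$. It remains to check the two cases of Theorem~\ref{thm:DW-stability}: evaluate the Euler-form quantities $\langle \delta_i, \alpha\rangle,\ \langle \alpha, \delta_i\rangle,\ \langle \alpha, \alpha\rangle$; build the directed graph $G(\underline\delta)$ from the signs of the $\langle \delta_i, \delta_j\rangle$ and test path-connectedness; test divisibility of $\alpha$ in the isotropic case; and for $t = 1$, test whether $\alpha = \delta_1$ together with $\langle \alpha, \alpha\rangle = 1$ (an indivisible extremal ray is automatically $\sigma$-stable, and hence Schur, by the lemma preceding Theorem~\ref{thm:DW-stability}). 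All of these checks are polynomial time. The main obstacle is the geometric step (b), namely extracting an explicit extremal ray of the minimal face through $\alpha_{i-1}$ from only a separation oracle; this is handled by first computing the affine hull of $F_{i-1}$ and then running strong LP optimization on the resulting slice, both polynomial-time thanks to the well-described nature of $C$, with the outer loop terminating after $\leq k+1$ steps because $C \subseteq \R^{k+1}$ and each iteration strictly decreases the face dimension.
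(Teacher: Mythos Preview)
Your proposal is correct and follows essentially the same three-step outline as the paper's proof: test $\sigma$-semistability via Theorem~\ref{thm:ss-poly-time}, extract a linearly independent extremal-ray decomposition of $\alpha$ in $\overline{\Sigma}(\Delta[k],\sigma)$, and then verify the Derksen--Weyman conditions of Theorem~\ref{thm:DW-stability}. The only difference is that the paper invokes \cite[Theorem~6.5.11]{GLS-book} as a black box for the decomposition step, whereas you spell out an explicit face-peeling procedure; one small caveat is that your chosen slice $\{x_1=1\}$ of $\overline{\Sigma}(\Delta[k],\sigma)$ need not be bounded for arbitrary $\sigma$, so it is cleaner (as the paper does) to work in the polytope $\Sigma_k(\beta)$ and transfer via the linear isomorphism $\zeta_Q$.
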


It is well-known that, given an optimization oracle for a polytope, one can efficiently express interior points as convex combinations of vertices.
\begin{theorem}[6.5.11 in \cite{GLS-book}]\label{thm:mem-vectors} For a well-described polytope $(P; n, \Phi)$ given by a strong optimization oracle, there is a polynomial time algorithm to express a rational interior point $p$ as a convex combination of vertices $v_1,\dots,v_t$ of $P$.
\end{theorem}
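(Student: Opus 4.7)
The plan is to prove the statement by induction on $\dim(P)$, mirroring a constructive proof of Carath\'eodory's theorem. The base case $\dim P = 0$ is trivial since $p$ is itself the unique vertex. For the inductive step, the strategy has four moves: first, find some vertex $v$ of $P$; second, shoot the ray from $v$ through $p$ and compute the boundary exit point $q = v + t^*(p-v)$ with $t^* := \max\{t \geq 0 : v + t(p-v) \in P\}$; third, write $p = \lambda v + (1-\lambda) q$ for $\lambda = 1 - 1/t^* \in (0,1]$; fourth, identify a proper face $F \subsetneq P$ with $q \in F$, equip it with its own strong optimization oracle, and recurse on $q$ inside $F$. Since $\dim F < \dim P \leq n$, the recursion terminates in at most $n$ levels, producing an expression of $p$ using at most $n+1$ vertices.

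To realize the first move (finding a vertex): fix a generic $c_0 \in \QQ^n$, invoke strong optimization on $P$ to obtain both the optimum value $M_0$ and an optimal point, defining the face $F_0 := P \cap \{c_0^T x = M_0\}$. A strong optimization oracle for $F_0$ is inherited from that of $P$: optimizing $c^T x$ over $F_0$ is equivalent to optimizing $(c + \kappa c_0)^T x$ over $P$ for a sufficiently large $\kappa$ whose size is bounded by the encoding length $\Phi$. Iterating with a sequence of objectives strictly reduces dimension at every step, and after at most $n$ iterations we reach a zero-dimensional face, namely a vertex $v$.

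To realize the second and fourth moves: computing $t^*$ is a one-dimensional linear program. By Theorem 6.4.1 in \cite{GLS-book}, strong separation is polynomial-time equivalent to strong optimization for well-described polyhedra, so we obtain a strong separation oracle for $P$ and binary-search for $t^*$ using membership queries on $v + t(p-v)$. The well-described hypothesis bounds the bit complexity of $t^*$ by a polynomial in $\Phi$, so a sufficiently fine-precision output of bisection can be rounded to the exact rational $t^*$ via simultaneous Diophantine approximation. To exhibit a face $F$ containing $q$, apply the separation oracle to any point $v + (t^* + \epsilon)(p - v)$ with $\epsilon$ small enough that the output is a valid inequality $d^T x \leq \delta$ for $P$; by a standard compactness/rounding argument (again using well-describedness to force an exact facet-defining inequality) this inequality is tight at $q$, so $F := P \cap \{d^T x = \delta\}$ is a proper face of $P$ containing $q$ and of strictly smaller dimension. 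Its strong optimization oracle is constructed from that of $P$ as in the previous paragraph.

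The main obstacle will be the Diophantine rounding that converts approximate bisection/oracle outputs into exact rational points, exact facet-defining inequalities, and hence exact faces $F$; this is where the well-describedness of $P$ is indispensable, as it bounds in advance the denominators of all vertices and facets and thereby makes rounding unambiguous. A secondary subtlety is ensuring that $\dim F < \dim P$ in every recursive call so that the recursion depth is $O(n)$; this follows because $p$ is interior, forcing $q \neq v$ and $q \in \partial P$, so the inequality $d^T x \leq \delta$ identified above is non-trivial on $P$. With these pieces in place, each level of recursion performs polynomially many oracle calls and rounding steps, so the total running time is polynomial in $\Phi$ and the encoding length of $p$, completing the inductive construction.
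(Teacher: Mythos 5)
The paper does not prove this statement; it is imported verbatim as Theorem 6.5.11 of \cite{GLS-book}, and your proposal is essentially a reconstruction of the standard proof given there (algorithmic Carath\'eodory: find a vertex via iterated optimization over optimal faces, shoot the ray from the vertex through $p$, round the exit point and a tight valid inequality using the bounded vertex/facet complexity guaranteed by well-describedness, and recurse on a proper face). The one place your write-up does not formally close is the induction hypothesis: you state the claim for \emph{interior} points, but the recursive call is on the exit point $q$, which lies on a proper face $F$ and need not be interior to $F$; the standard fix is to prove the statement for arbitrary points of $P$ (as GLS do), or equivalently to recurse on the \emph{minimal} face containing $q$, in whose relative interior $q$ does lie. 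Also note $\lambda = 1 - 1/t^*$ lies in $[0,1)$ rather than $(0,1]$; for interior $p$ one has $t^* > 1$ and hence $\lambda \in (0,1)$. With these cosmetic repairs the argument is correct and matches the cited source.
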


%\begin{theorem} \label{thm:mem-vectors}Given a membership oracle for a rational convex polytope, for any interior point $p$ we can efficiently express $p$ as a positive rational combination of vertices $v_1,\dots,v_t$ of the polytope.\end{theorem}

This theorem applies to our case because the polyhedron $\Sigma(Q,\beta)$ is given by a strong optimization oracle by \cref{cor:opt-oracle}:

\begin{corollary}
Consider the quiver $\Delta[k]$ and let $\alpha$ be a dimension vector and $\sigma$ a weight such that $\alpha \in \overline{\Sigma}(Q,\sigma)$. Then, in polynomial time, we can find linearly independent integral vectors $\delta_1,\dots,\delta_t$ that generate extremal vectors of $\overline{\Sigma}(Q,\sigma)$ such that $\alpha$ is a positive rational combination of $\delta_1,\dots,\delta_t$.
\end{corollary}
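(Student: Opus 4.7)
The plan is to reduce the statement to Theorem~\ref{thm:mem-vectors} (the GLS algorithm for expressing a point as a convex combination of vertices) applied to a well-chosen polytopal slice of the cone $\overline{\Sigma}(Q,\sigma)$, and then perform a small amount of linear-algebraic postprocessing to achieve linear independence and integrality. The essential ingredient that makes this work is already available, namely that $\overline{\Sigma}(Q,\sigma)$ is well-described with a strong polynomial-time optimization oracle.

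First, I would pass from the cone to a polytope. By the reciprocity discussion preceding Definition~\ref{def:polytope}, there is a dimension vector $\beta$ with $\overline{\Sigma}(Q,\sigma) = \zeta_Q(\Sigma(Q,\beta))$, and Corollary~\ref{cor:opt-oracle} endows $\Sigma(Q,\beta)$ with a polynomial-time strong optimization oracle. Since $\zeta_Q$ is an explicit invertible linear map with polynomially-sized coefficients, this transports to a strong optimization oracle for $\overline{\Sigma}(Q,\sigma)$. Choose any linear functional $L$ that is strictly positive on $\overline{\Sigma}(Q,\sigma)\setminus\{0\}$ (for example $L(y)=\sum_{x\in Q_0} y_x$, which is positive since all points have nonnegative entries and sincere support), and set $P:=\overline{\Sigma}(Q,\sigma)\cap\{y:L(y)=L(\alpha)\}$. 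Then $P$ is a well-described polytope containing $\alpha$, inheriting a strong optimization oracle from the cone via the explicit affine slice, and its vertices lie on the extremal rays of $\overline{\Sigma}(Q,\sigma)$.

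Next, apply Theorem~\ref{thm:mem-vectors} to $P$ and the point $\alpha$. In polynomial time this produces a decomposition
\[
\alpha \;=\; \sum_{i=1}^{t} \lambda_i \, v_i, \qquad \lambda_i > 0, \ \sum_i \lambda_i = 1,
\]
where $v_1,\dots,v_t$ are vertices of $P$ of polynomial encoding length. Each $v_i$ is rational and lies on an extremal ray of $\overline{\Sigma}(Q,\sigma)$; clearing denominators inside each $v_i$ yields integral generators $\delta_i$ of those rays, and $\alpha$ remains a positive rational combination of the $\delta_i$.

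It remains to ensure linear independence of the $\delta_i$. If the $v_i$ are affinely dependent in the hyperplane $\{L=L(\alpha)\}$, then standard Caratheodory-style pruning, driven by Gaussian elimination on the dependency $\sum_i \mu_i v_i=0$ with $\sum_i \mu_i=0$, removes one vertex from the decomposition while keeping all coefficients positive; iterating this in polynomial time leaves an affinely independent collection. Because the hyperplane $\{L=L(\alpha)\}$ does not pass through the origin, affine independence of the surviving $v_i$ is equivalent to linear independence of them as vectors in $\R^{Q_0}$, and hence of the $\delta_i$. The main obstacle in carrying this out rigorously is bookkeeping the bit-complexity across the slice, the oracle transport, and the Caratheodory reduction, but all these operations are classical polynomial-time routines once the optimization oracle of Corollary~\ref{cor:opt-oracle} is in hand; no further quiver-theoretic input is needed.
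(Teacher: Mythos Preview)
Your proposal is correct and follows essentially the same approach as the paper: the paper's entire proof is the single sentence preceding the corollary, namely that Theorem~\ref{thm:mem-vectors} applies because $\Sigma(Q,\beta)$ (and hence, after the linear change of coordinates $\zeta_Q$, the cone $\overline{\Sigma}(Q,\sigma)$) has a strong optimization oracle by Corollary~\ref{cor:opt-oracle}. You fill in the details the paper leaves implicit---the explicit polytopal slice, the transport of the oracle through $\zeta_Q$, the integrality step, and the Carath\'eodory pruning to linear independence---but there is no substantive difference in strategy.
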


%\begin{proof}
%We have a membership oracle for $\overline{\Sigma}(Q,\sigma)$ by Theorem~\ref{thm:ss-poly-time}. 
%We have an optimization oracle for $\overline{\Sigma}(Q,\sigma)$ by Corollary~\ref{cor:opt-oracle}.
%Thus, by Theorem~\ref{thm:mem-vectors}, we get the required conclusion. \end{proof}

\begin{proof} [Proof of Theorem~\ref{thm:stable-poly-time}]
We first check if $\alpha$ is $\sigma$-semistable (can be done efficiently by Theorem~\ref{thm:ss-poly-time}). If $\alpha$ is $\sigma$-semistable, then we can find efficiently $\delta_1,\dots,\delta_k$ satisfying the hypothesis in Theorem~\ref{thm:DW-stability}. Now, it is straightforward and efficient to check whether condition $(1)$ or condition $(2)$ is satisfied, so we can decide whether $\alpha$ is $\sigma$-stable efficiently. 
\end{proof}
Now we have proved all we need for \cref{thm:polyhedral}. 
\begin{proof}[Proof of \cref{thm:polyhedral}] The first statement follows from \cref{cor:ipca-Malpha}, the second from the discussion at the beginning of this section, and the third from \cref{cor:union-cones} and \cref{thm:stable-poly-time}.\end{proof}
\FloatBarrier

\subsection{Randomized algorithm for generic stability of star quivers}

Here we include an algorithm to decide the generic stability of star quivers, and hence generic unique existence of iPCA. The algorithm takes advantage of the fact that semistability and polystability are generically equivalent for star quivers. First we decide semistability, and hence polystability, by checking whether the likelihood function is bounded below. Next we distinguish polystability from stability by checking if there is a stabilizer subgroup of positive dimension. For the latter calculation, we need only check that there is no nontrivial element of the Lie algebra stabilizing the element in question. Because the Lie algebra action is linear, we can check this by evaluating the rank of the Lie algebra action as a linear map. Let 
$$ \pi: \mathfrak{gl}(\alpha) = \Mat_{p,p} \oplus \bigoplus_{i=1}^k \Mat_{q_i,q_i}  \to \Rep(\Delta[k], \alpha)) = \bigoplus_{i=1}^k \Mat_{p,q_i}$$
denote the Lie algebra action of $\mathfrak{gl}(\alpha)$ on some element $B \in \bigoplus_{i=1}^k \Mat_{p,q_i}$. We can compute the Lie algebra action as follows:
\begin{equation}\pi(H_0, H_1, \dots, H_k)B = (H_0 B_1 - B_1 H_1, \dots, H_0 B_k - B_k H_k).\label{eqn:pi}
\end{equation} 
Any element of the form $H_0 = t I_{p}, H_i = t I_{q_i}$ will be in the stabilizer, but any other elements are non-trivial.

\begin{Algorithm}
\begin{description}
\item[\hspace{.2cm}\textbf{Input}] A dimension vector $\alpha = (p, q_1, \dots, q_k)$. 

%A representation of $\Delta_k$ with dimension vector $(p, q_1, \dots, q_k)$, given as  matrices $B_1, \dots, B_k$ 

\item[\hspace{.2cm}\textbf{Output}] \textbf{Yes} if $\alpha$ is generically stable, \textbf{No} otherwise.

\item[\hspace{.2cm}\textbf{Algorithm}]
\end{description}
\begin{enumerate}
\item\label{it:sample} For each $i \in \{1, \dots, k\}$, sample a $p\times q_i$ matrices $B_i$ with i.i.d. complex Gaussian entries. Think of $B = (B_1, \dots B_k)$ as a representation of the start quiver $\Delta[k]$.
\item Check if $B = (B_1, \dots, B_k)$ is semistable using e.g. \cite{GGOW16}. If not, output \textbf{No}. Otherwise continue to the next step.
\item Compute the Lie algebra action $\pi$ of $\mathfrak{gl}(\alpha)$ on $B$ as in \cref{eqn:pi}.
If the dimension of the kernel of $\pi$ is at most $1$, output \textbf{Yes.} Otherwise output \textbf{No.}
\end{enumerate}
\caption{Randomized algorithm to decide generic stability of star quivers.}\label{alg:star-stable}
\end{Algorithm}

%The main mechanism of the algorithm is that semistability and polystability are generically equivalent, dn

\appendix

\FloatBarrier
\section{From complex iPCA to real iPCA} \label{app: algebraic groups over R} 

A complex (affine) variety $X$ with an $\R$-structure (i.e., an $\R$-subalgebra $\R[X] \subseteq \C[X]$ such that $\R[X] \otimes_\R \C = \C[X]$\footnote{For any complex (affine) variety $Y$, we denote by $\C[Y]$ its coordinate ring}) is called an (affine) $\R$-variety. As a technical point, we identify a variety $X$ with its complex points $X_\C$ (which can be viewed as algebra morphisms $\C[X] \rightarrow \C$). A morphism $f:X \rightarrow Y$ of (affine) varieties is equivalent to a map on the coordinate rings $f^*: \C[Y] \rightarrow \C[X]$. The morphism $f$ is said to be defined over $\R$ if $f^*(\R[Y]) \subseteq \R[X]$. The real points $X_\R$ as the points in $X_\C$ (i.e., the algebra morphisms $\C[X] \rightarrow \C$) that are defined over $\R$. A complex algebraic group $G$ is called an $\R$-group if it is an (affine) $\R$-variety such that the multiplication map and inverse map are defined over $\R$, and its real points $G_\R$ is an algebraic group over $\R$.

\begin{remark}
We only deal with affine varieties in this paper, so we will drop the prefix affine from here on.
\end{remark}

For a representation $V$ of an algebraic group $G$ (over $K = \R$ or $\C$), we say that $V$ is generically $G$-semistable (resp. $G$-polystable, $G$-stable) if there is a Zariski open dense subset of $V$ that consists of $G$-semistable points (resp. $G$-polystable points, $G$-stable points). The following is a very important result.

\begin{proposition} \label{gen.stable.transfer}
Let $G$ be a connected reductive $\R$-group. Let $V$ be a rational representation of $G$ that is defined over $\R$. Then $V$ is generically $G$-semistable (resp. $G$-polystable, $G$-stable) if and only if $V_\R$ is generically $G_\R$-semistable (resp. $G_\R$-polystable, $G_\R$-stable).
\end{proposition}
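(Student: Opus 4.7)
The plan is to first establish a pointwise transfer principle—that for a real point $v \in V_\R$ the stability notions over $\C$ and over $\R$ agree—and then use a Zariski density argument to upgrade pointwise equivalence to a generic equivalence.

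For the pointwise transfer, I would argue as follows. For semistability, a real point $v$ is $G$-semistable iff some non-constant $G$-invariant does not vanish at $v$; since $G$ is a connected reductive $\R$-group, $G_\R$ is Zariski-dense in $G$, so $\C[V]^G = \R[V]^{G_\R} \otimes_\R \C$, and the existence of a non-vanishing complex invariant at $v$ is equivalent to the existence of a non-vanishing real invariant at $v$, which in turn characterizes $G_\R$-semistability. For polystability, I would invoke a classical result of Birkes: for $v \in V_\R$, the orbit $G \cdot v$ is Euclidean-closed in $V$ iff $G_\R \cdot v$ is Euclidean-closed in $V_\R$ (and recall the Euclidean closure of a reductive orbit in $V$ coincides with its Zariski closure, per Remark~\ref{topology}). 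For stability, since the stabilizer $G_v$ is an $\R$-subgroup of $G$, one has $\dim G_v = \dim (G_v)_\R$, and similarly for $\Delta$; combined with Birkes, $G_v/\Delta$ is finite iff $(G_\R)_v/\Delta_\R$ is finite.

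Next, I would run the density argument. Each stability locus $V^{\bullet} \subseteq V$ (for $\bullet \in \{ss, ps, s\}$) is constructible and defined over $\R$: the semistable locus is Zariski-open and cut out by non-vanishing of invariants lying in $\R[V]^{G_\R}$, while the polystable and stable loci are constructible by standard results for reductive actions (Luna stratification or direct GIT analysis), with $\R$-rationality following from the pointwise identification in Step~1 (the loci are stable under complex conjugation). Suppose $V$ is generically $G$-$\bullet$-stable, so $V^\bullet$ contains a Zariski-open dense subset $U$ of $V$; replacing $U$ by the complement of the Zariski closure of $V \setminus V^\bullet$, we may take $U$ defined over $\R$. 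Since $V_\R$ is Zariski-dense in the irreducible variety $V$, $U \cap V_\R$ is a Zariski-open dense subset of $V_\R$ consisting (by Step~1) of $G_\R$-$\bullet$-stable points. The converse direction is analogous: generic $G_\R$-$\bullet$-stability of $V_\R$ means $V^\bullet_\R$ is dense in $V_\R$, hence its Zariski closure in $V$ contains $V_\R$ and so is all of $V$, and constructibility of $V^\bullet$ then furnishes a dense Zariski-open subset of $V^\bullet$ in $V$.

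The main obstacle is verifying constructibility and $\R$-rationality of the polystable locus; constructibility is classical but nontrivial, and $\R$-rationality is essentially the content of Birkes' theorem applied pointwise. Once these technical inputs are in hand, the density step is routine because $V$ is irreducible as a vector space and $V_\R$ is Zariski-dense in $V$.
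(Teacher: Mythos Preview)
The paper does not prove this proposition; it simply cites \cite[Proposition~2.23]{derksen2020maximum}. Your sketch is a correct outline of how such results are established, and the key inputs you identify---Birkes' theorem for closed orbits and the pointwise comparison of stabilizer dimensions---are exactly what the cited reference uses.

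Two small technical remarks. First, in your semistability step, the assertion that non-vanishing of a real invariant ``characterizes $G_\R$-semistability'' is not a formal consequence of the invariant-ring identification $\C[V]^G = \R[V]^{G_\R} \otimes_\R \C$. The nontrivial direction (all real homogeneous invariants vanish at $v \Rightarrow 0 \in \overline{G_\R \cdot v}$) is precisely Kempf's rationality of optimal destabilizing one-parameter subgroups over $\R$, and you should cite it explicitly rather than absorb it into the algebra. Second, for the genericity transfer you do not need the full $\R$-rationality of the polystable or stable locus as a constructible set. It suffices that each locus $V^\bullet$ is constructible and stable under complex conjugation (immediate from Step~1, since the action is defined over $\R$): if $V^\bullet$ contains a dense Zariski-open $U$, then $U \cap \bar{U}$ is nonempty, open, defined over $\R$, and still contained in $V^\bullet$, so its real points form a dense open subset of $V_\R$. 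The converse direction works as you wrote, using that a dense constructible subset of an irreducible variety contains a dense open.
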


\begin{proof}
See \cite[Proposition~2.23]{derksen2020maximum}.
\end{proof}

Thus, as far as (generic) semistability/polystability/stability is concerned, there is no difference whether you consider the (actions corresponding to the) real model $\mathcal{M}_{\alpha,\R}$ or the complex model $\mathcal{M}_{\alpha,\C}$. However, there is still one subtle issue, namely that stability is not equivalent to existence of a unique MLE for real Gaussian groups (see Theorem~\ref{theo:AKRS}). However, we leverage Lemma~\ref{lem:rect-AKRS} instead. First, a lemma:

\begin{lemma} \label{lem:transfer-gen-indec}
Let $\alpha = (p,q_1,\dots,q_r)$. Suppose a generic $V \in \Rep(\Delta[k], \alpha)_\R$ is indecomposable (over $\R$), then a generic $V \in \Rep(\Delta[k], \alpha)_\C$ is indecomposable (over $\C$).
\end{lemma}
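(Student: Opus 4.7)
I will prove the contrapositive: if a generic $V \in \Rep(\Delta[k], \alpha)_\C$ is decomposable, then a generic $V \in \Rep(\Delta[k], \alpha)_\R$ is also decomposable. Under the hypothesis of the contrapositive, \cref{thm:kac-ext} gives a canonical decomposition $\alpha = \beta_1 \oplus \dots \oplus \beta_l$ with $l \geq 2$, each $\beta_i$ a Schur root of $\Delta[k]$, and $\ext(\beta_i, \beta_j) = 0$ for all $i \neq j$.

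The key object is the $\R$-morphism
$$\psi : \GL(\alpha) \times \prod_{i=1}^{l} \Rep(\Delta[k], \beta_i) \;\longrightarrow\; \Rep(\Delta[k], \alpha), \qquad (g, V_1, \dots, V_l) \mapsto g \cdot (V_1 \oplus \dots \oplus V_l),$$
which is defined over $\R$. Over $\C$, the image of $\psi$ contains the Zariski open dense locus in $\Rep(\Delta[k],\alpha)_\C$ on which the canonical decomposition applies, so $\psi_\C$ is dominant. I then argue that $\psi$ is smooth at generic points: at $x = (1, V_1, \dots, V_l)$ with each $V_i$ a generic Schurian indecomposable complex representation, the image of the differential $d\psi_x$ is the sum of the tangent space to the $\GL(\alpha)$-orbit through $V_1 \oplus \dots \oplus V_l$ and the block-diagonal subspace $\bigoplus_i \Rep(\Delta[k], \beta_i) \hookrightarrow \Rep(\Delta[k], \alpha)$. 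A standard cohomological computation in the deformation theory of quiver representations shows that this sum is all of $\Rep(\Delta[k], \alpha)$ precisely because $\Ext^1_{\Delta[k]}(V_i, V_j) = 0$ for $i \neq j$, which holds generically thanks to \cref{thm:kac-ext}.

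To transfer this to $\R$, I note that the locus in the source where $\psi$ is smooth is a Zariski open subset defined over $\R$. By Zariski density of the real points of the source in its complexification (both are products of vector spaces and matrix groups, so real points are Zariski dense), this smooth locus contains a real point $x_0$. At $x_0$ the differential $d\psi_{x_0}$ is surjective as a $\C$-linear map, and since $x_0$ is a smooth real point the $\R$-dimensions of the real tangent spaces equal the $\C$-dimensions of their complexifications; hence $d\psi_{x_0}$ is also surjective as an $\R$-linear map. By the real implicit function theorem, $\psi$ is an open map of real analytic manifolds near $x_0$, so $\psi(X_\R)$ contains a Euclidean open neighborhood of $\psi(x_0)$ in $\Rep(\Delta[k], \alpha)_\R$. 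Since $\Rep(\Delta[k], \alpha)_\R$ is an irreducible real affine space, any Euclidean open subset is Zariski dense. Hence a generic real $V$ is of the form $g \cdot (V_1 \oplus \dots \oplus V_l)$ with $g, V_1, \dots, V_l$ all real, and since $l \geq 2$ this is a nontrivial real direct sum decomposition, so $V$ is decomposable over $\R$.

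\textbf{Main obstacle.} The delicate step is establishing smoothness of $\psi$ at a \emph{real} point, and knowing that smoothness of an $\R$-morphism at a real point suffices to force Zariski density of the image over $\R$. The usual pitfall, that an $\R$-morphism dominant over $\C$ can fail to be surjective over $\R$ (e.g., $x \mapsto x^2$), is avoided here because I only need Zariski density and I invoke the submersion theorem at a real smooth point rather than naive surjectivity. The other piece of technical care is the tangent-space computation identifying the cokernel of $d\psi$ with an $\Ext$ group between distinct canonical summands, which is where the vanishing $\ext(\beta_i,\beta_j)=0$ from \cref{thm:kac-ext} enters crucially.
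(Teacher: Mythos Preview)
Your proof is correct and follows the same overall architecture as the paper: argue the contrapositive, use the canonical decomposition $\alpha=\beta_1\oplus\cdots\oplus\beta_l$ with $l\geq 2$, and show that the image of the $\R$-morphism $\psi:\GL(\alpha)\times\prod_i\Rep(\Delta[k],\beta_i)\to\Rep(\Delta[k],\alpha)$ on real points contains a Euclidean open set, which is then Zariski dense.

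The difference is in how that last step is carried out. The paper does not compute the differential at all: it observes that $\psi$ is dominant over $\C$ and that $X_\R$ is Zariski dense in $X$, hence $\psi(X_\R)$ is Zariski dense in $Y$; then it invokes Tarski--Seidenberg to say $\psi(X_\R)$ is semi-algebraic, and a Zariski-dense semi-algebraic subset of $\R^N$ is full-dimensional, so it contains a Euclidean open set. You instead prove generic smoothness of $\psi$ directly, identifying the cokernel of $d\psi$ at $(1,V_1,\dots,V_l)$ with $\bigoplus_{i\neq j}\Ext^1(V_j,V_i)$ and using the vanishing $\ext(\beta_i,\beta_j)=0$ from \cref{thm:kac-ext}; you then find a real point in the (real-defined) smooth locus by density and apply the real submersion theorem. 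Your route is more hands-on and extracts more from the canonical decomposition (the $\Ext$ vanishing, not just the existence of the decomposition), and it avoids any appeal to real algebraic geometry beyond the implicit function theorem. The paper's route is shorter and more robust---it would work for any dominant $\R$-morphism between $\R$-varieties whose source has Zariski-dense real points---but relies on the semi-algebraic machinery. Both are valid; yours is arguably more in the spirit of quiver representation theory.
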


\begin{proof}
We will prove this by contradiction. Suppose a generic $V \in \Rep(\Delta[k], \alpha)_\C$ is not indecomposable. Then, its canonical decomposition must be non-trivial, say it is
$$
\alpha = \beta_1 \oplus \beta_2 \oplus \dots \oplus \beta_k.
$$

This means that the natural map $\psi: \GL(\alpha)_\C \times \prod_{i=1}^k \Rep(\Delta[k], \beta_i)_\C =: X \rightarrow \Rep(\Delta[k], \alpha)_\C =: Y$ is a dominant morphism of varieties. Observe that both $X$ and $Y$ are varieties that are defined over $\R$ in a natural way and that the $\R$-points $X_\R$ and $Y_\R$ are Zariski-dense in $X$ and $Y$ respectively. Since $X_\R$ is Zariski-dense and $\psi$ is dominant, we conclude that $\psi(X_\R)$ is Zariski-dense in $Y$ and hence Zariski-dense in $Y_\R$. Now, $\psi: X_\R \rightarrow Y_\R$ is a morphism of real algebraic varieties and hence the image $\psi(X_\R)$ is a semi-algebraic set. Since it is dense, it must be a full dimensional semi-algebraic set. 

Observe that $\psi(X_\R)$ consists of representations that are decomposable (over $\R$). Thus, we have a full dimensional semi-algebraic set, i.e., $\psi(X_\R)$ which consists of representations that are decomposable. Thus, we cannot have a Zariski open dense subset of $Y_\R$ consisting of indecomposable representations.
\end{proof}

\begin{lemma} \label{lem:dec-compact}
Let $V \in \Rep(Q,\alpha)_\R$. If $V$ is decomposable over $\R$, then the image of the stabilizer $H = \rho_\alpha((\GL(\alpha)_\sigma(\R))_V)$ is not compact. 
\end{lemma}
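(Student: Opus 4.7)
The goal is to exhibit, inside the stabilizer $\operatorname{Aut}(V) \cap \GL(\alpha)_\sigma(\R)$, a one-parameter real subgroup whose image under $\rho_\alpha$ is unbounded. Given a decomposition $V = V_1 \oplus V_2$ with $V_1, V_2 \neq 0$ over $\R$, the natural candidates are the block-diagonal scalings: for $(s,t) \in (\R^*)^2$, let $\lambda_{s,t} \in \operatorname{Aut}(V)$ be the automorphism acting as $s\cdot \mathrm{id}$ on $V_1$ and $t\cdot \mathrm{id}$ on $V_2$ at every vertex. These clearly lie in the stabilizer of $V$ under the $\GL(\alpha)$-action.

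A direct character computation gives $\sigma(\lambda_{s,t}) = s^{S_1} t^{S_2}$ where $S_i := \sigma \cdot \underline{\dim} V_i$, so using $S_1 + S_2 = \sigma \cdot \underline{\dim} V = 0$ the condition $\lambda_{s,t} \in \GL(\alpha)_\sigma$ becomes $(s/t)^{S_1} = 1$. When $S_1 = 0$ the full $2$-torus lies in $\GL(\alpha)_\sigma(\R)$. The kernel of $\rho_\alpha$ intersected with this $2$-torus is the diagonal $\{\lambda_{c,c} : c \in \R^*\}$ of global scalars, so the image in $H = \rho_\alpha((\GL(\alpha)_\sigma(\R))_V)$ contains a copy of $\R^*$, which is non-compact. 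Explicitly, the one-parameter subgroup $u \mapsto \lambda_{u,1}$ maps to a matrix that is the identity on the $V_2$-blocks but scaled by $u$ on the $V_1$-blocks, so it escapes every compact set as $u \to \infty$. In the applications of this lemma (the stability direction of Theorem~\ref{thm:realipca-to-sigma}), $V$ is $\sigma$-polystable because the MLE exists, so by Lemma~\ref{lem:sigma-ps-can} every canonical summand of $V$ has zero Schofield pairing; grouping summands yields a decomposition with $S_1 = 0$ and the above suffices.

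For a fully general decomposable $V$ with $S_1 \neq 0$, the scaling argument fails and one must find other stabilizer elements. The natural replacement is a unipotent element $I_V + s\phi$ with $\phi$ a nonzero morphism in $\operatorname{Hom}_Q(V_1, V_2)$ (or $\operatorname{Hom}_Q(V_2, V_1)$): such an element is strictly triangular at every vertex, hence has vertex-determinant $1$ and automatically lies in $\GL(\alpha)_\sigma(\R)$, it stabilizes $V$, and $\rho_\alpha(I_V + s\phi)$ has entries growing linearly in $s$, producing a non-compact family. The main technical obstacle I foresee is the sub-case in which $S_1 \neq 0$ and both hom-spaces vanish; I would handle this either by verifying that it cannot arise under the hypotheses actually needed (for instance, $V$ in the polystable locus), or by a Harder--Narasimhan-type reduction that replaces $V$ by the associated graded of its destabilizing filtration, whose larger automorphism group inherits the required noncompactness.
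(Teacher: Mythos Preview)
Your argument in the case $S_1 = 0$ is exactly the paper's: pick independent scalars on the two summands, verify the character condition, and observe that the image under $\rho_\alpha$ is unbounded. (One small wording caveat: the operator $\rho_\alpha(\lambda_{u,1})$ acts as the identity on the diagonal blocks $V_i \to V_i$ of each $\Mat_{\alpha(ha),\alpha(ta)}$ and by $u^{\pm 1}$ on the off-diagonal blocks $V_1 \leftrightarrow V_2$; it is these off-diagonal pieces, not the ``$V_1$-blocks'' themselves, that witness non-compactness. For a connected quiver with sincere $\alpha$ they are always present.)

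Where you diverge from the paper is in your treatment of $S_1 \neq 0$, and here you are actually more careful than the paper. The paper does not separate this case: it sets $\lambda = 2$, solves $\lambda^{\nu_1}\mu^{\nu_2} = 1$ for $\mu > 0$, and then asserts that $\rho_\alpha(g)$ has an eigenvalue of modulus $\neq 1$. But since $\nu_1 + \nu_2 = 0$, the unique positive solution when $\nu_1 \neq 0$ is $\mu = 2 = \lambda$, so $g$ is a global scalar and $\rho_\alpha(g)$ is the identity. Your instinct that the scaling argument collapses here is correct, and in fact the lemma as stated fails for such decompositions: take $V = S_x \oplus S_{y_1}$ in $\Rep(\Delta[1],(1,1))$, whose stabilizer in $\GL(\alpha)_\sigma(\R)$ is the global-scalar diagonal and hence has trivial (compact) image under $\rho_\alpha$.

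Your observation that the only place the lemma is invoked (the proof of Theorem~\ref{thm:realipca-to-sigma}) is in the $\sigma$-polystable-but-not-stable regime, where by Lemma~\ref{lem:sigma-ps-can} one can always arrange a decomposition with $S_1 = 0$, is exactly the right fix and is all that is needed. The unipotent and Harder--Narasimhan suggestions in your third paragraph are therefore unnecessary for the paper's purposes, though the unipotent idea is sound whenever a nonzero hom exists.
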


\begin{proof}
Suppose $V = U \oplus W$ is a decomposition of $V$ over $\R$. In other words, for each $x \in Q_0$, we have a splitting $V(x) = \R^{\alpha_x}  = U(x) \oplus W(x)$ and for each arrow $a \in Q_1$, $V(a)$ maps $U(ta) \rightarrow U(ha)$ and maps $W(ta) \rightarrow W(ha)$. Suppose $\dim(U) = \beta$ and $\dim(W) = \gamma$.

Let $\lambda = 2$ and let $\mu \in \R_{>0}$ be such that $\lambda^{\sigma \cdot \beta} \mu^{\sigma \cdot \gamma} = 1$. We need to argue that such a $\mu$ exists. Let $\nu_1= \sigma \cdot \beta$ and $\nu_2 = \sigma \cdot \gamma $. Since $\sigma \cdot \alpha = 0$, either both $\nu_1$ and $\nu_2$ are zero or both non-zero. If both are zero, then any choice of $\mu$ works. If both are non-zero, then there is a unique choice of $\mu$. 

Consider the linear transformation $g_x \in \GL_{\alpha_x}$ that acts on $U(x)$ by the scalar $\lambda$ and on $W(x)$ by the scalar $\mu$. It is easy to check that $g = (g_x)_{x \in Q_0} \in \GL(\alpha)_\sigma$ because $\prod_{x \in Q_0} \det(g_x)^{\sigma_x} = \lambda^{\sigma \cdot \beta} \mu^{\sigma \cdot \gamma} = 1$.

It is straightforward to see that $\rho_\alpha(g)$ has eigenvalues with absolute value $\neq 1$. Thus the sequence $\{\rho_\alpha(g^m)\}_{m \in \Z_{>0}}$ is a sequence in $H$ with no convergent subsequence. Thus $H$ cannot be compact.
\end{proof}

Before we prove the theorem, we will need the following lemma. For an algebraic group $G$ (defined over $K = \R$ or $\C$), we denote by $G^\circ$ its identity component, i.e., the connected component (in the Euclidean topology) of $G$ that contains the identity element. The following lemma was proven for $K = \C$ in \cite[Lemma~2.17]{derksen2020maximum}, but it is also true for $K = \R$ as we will now show.

\begin{lemma} \label{lem:conn-component-stability}
Let $K = \R$ or $\C$ denote the ground field. Let $G$ be an algebraic group and $V$ a rational representation. Let $G^\circ$ denote the identity component of $G$. Then for $v \in V$, $v$ is $G$-semistable/polystable/stable if and only if $v$ is $G^\circ$-semistable/polystable/stable.
\end{lemma}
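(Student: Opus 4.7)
The plan is to leverage the classical fact that $G^\circ$ is a closed normal subgroup of finite index in $G$, so that every $G$-orbit is a finite union of $G^\circ$-orbits. In the complex case this finiteness is standard, and in the real case it is Whitney's theorem that a real algebraic variety has finitely many Euclidean connected components. Choose coset representatives $G = \bigsqcup_{i=1}^r g_i G^\circ$ and work with the decomposition $Gv = \bigcup_{i=1}^r g_i(G^\circ v)$ throughout. Semistability then drops out immediately: Euclidean closure commutes with finite unions, so $\overline{Gv} = \bigcup_i g_i \overline{G^\circ v}$, and since each $g_i$ fixes $0$, we get $0 \in \overline{Gv}$ iff $0 \in \overline{G^\circ v}$.

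The heart of the argument is polystability. Here I would verify that $G_v$ and $(G^\circ)_v = G_v \cap G^\circ$ share the same identity component, hence the same dimension, forcing $\dim G^\circ v = \dim Gv$. Since $G^\circ v \subseteq Gv$ are smooth submanifolds of $V$ of equal dimension, $G^\circ v$ is open in $Gv$; translating, each $g_i G^\circ v$ is open in $Gv$ as well. Distinct $G^\circ$-orbits being disjoint, $Gv$ breaks up in the Euclidean topology as a finite disjoint union of $G^\circ$-orbits, each clopen in $Gv$. It follows that $Gv$ is closed in $V$ iff each $g_i G^\circ v$ is closed in $V$ iff $G^\circ v$ is closed in $V$, giving the polystability equivalence.

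For stability, since polystability has already been handled, it suffices to show $G_v/\Delta$ is finite iff $(G^\circ)_v/(\Delta \cap G^\circ)$ is finite, where $\Delta = \ker \rho \subseteq G_v$. Both quotients involve groups of finite type with finitely many Euclidean components, so finiteness is equivalent to equality of dimensions; as $G_v$ and $(G^\circ)_v$ share an identity component and so do $\Delta$ and $\Delta \cap G^\circ$, the two dimension conditions coincide. The main subtlety I anticipate is in the polystability step, specifically the need to reason in the Euclidean topology throughout for the real case, where Whitney's theorem together with the smooth-manifold structure of orbits must carry the argument in place of the cleaner complex-algebraic reasoning used for $K=\C$ in \cite[Lemma~2.17]{derksen2020maximum}.
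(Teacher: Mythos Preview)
Your proposal is correct and follows essentially the same approach as the paper: both rely on the finite-index fact (via Whitney's theorem for $K=\R$), decompose the $G$-orbit into finitely many $G^\circ$-orbits that are connected components, and compare stabilizer and kernel dimensions for stability. Your semistability argument via $\overline{Gv}=\bigcup_i g_i\overline{G^\circ v}$ is the topological reformulation of the paper's sequential extraction argument, and your polystability step supplies the dimension justification for why the $G^\circ$-orbits are clopen in $Gv$, which the paper simply asserts.
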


\begin{proof}
First, we note that $G^\circ$ is a subgroup of finite index in $G$, equivalently that $G$ has finitely many components. For $K = \C$, this is well known because connected components in the Euclidean topology agree with the connected components in the Zariski topology, the latter of which is always finite by Noetherianity. For $K = \R$, this follows from Whitney \cite[Theorem~3]{Whitney} which proves that any real algebraic variety has finitely many topological components.

There is one subtle issue in the case of $K = \R$. We note that $G^\circ$ may not be an algebraic group over $\R$! However, it is still a Lie group and all the arguments used here should be taken in the framework of Lie theory. We only defined stability notions for algebraic groups, but their definitions extend easily to any group action, in particular for actions of Lie groups. 

Suppose $v$ is $G$-semistable, then it is clearly $G^\circ$-semistable. Conversely, suppose $v$ is not $G$-semistable, then there is a sequence $\{g_m\}$ such that $\lim_{m \to \infty} g_m v = 0$. Since $G$ has finitely many components, we can extract a subsequence $\{h_m\}$ with the same property but is contained in one connected component, say $j G^\circ$ for some $j \in G$. Then, we see that $\{j^{-1}h_m\}$ is a sequence in $G^\circ$ such that $\lim_{m \to \infty} j^{-1} h_m v = 0$. Hence $v$ is $G^\circ$-unstable.

The arguments for polystability and stability are the same as in \cite[Lemma~2.17]{derksen2020maximum}, but we recall them here for completeness. For polystability, we observe that the $G$-orbit is a disjoint finite union of $G^\circ$-orbits each of which forms a connected component of the $G$-orbit. Hence the $G$-orbit of $v$ is closed if and only if the $G^\circ$ orbit of $v$ is closed.

For stability, we need to observe that (by the orbit-stabilizer theorem) $\dim(G_v) = \dim((G^\circ)_v)$ because their orbits have the same dimensions. Further suppose the action of $G$ on $V$ is given by $\rho: G \rightarrow \GL(V)$. Then, we observe that the kernel of $\rho$ (denoted $\Delta$) and the kernel of $\rho|_{G^\circ}$ (i.e., $\Delta \cap G^\circ$) have the same dimension because they both have the same Lie algebra. Thus, $\dim(G_v) = \dim(\Delta)$ if and only if $\dim((G^\circ)_v) = \dim(\Delta \cap G^\circ)$. 
\end{proof}

Now, we have all the ingredients need to prove Theorem~\ref{thm:realipca-to-sigma}.

\begin{proof} [Proof of Theorem~\ref{thm:realipca-to-sigma}]
We observe that $\GL(\alpha)_\sigma$ is a reductive $\R$-group and $\Rep(\Delta[k], \alpha)) = \oplus_{i=1}^k \mat_{p,q_i}$ is an $\R$-variety in a natural way. Since $\GL(\alpha)_\sigma$ is not a connected group, which prevents us from applying Proposition~\ref{gen.stable.transfer} directly, which is precisely why we prove Lemma~\ref{lem:conn-component-stability}.

Let $\mu = (\mu(x), \mu(y_1), \dots, \mu(y_k))$ be the unique indivisible integral weight that is a positive multiple of $\sigma$. In other words, $\sigma = d \mu$ for some $d \in \Z_{\geq 1}$ and ${\rm g.c.d.} (\mu(x), \mu(y_1),\dots,\mu(y_k)) = 1$. First, we observe that $\GL(\alpha)_\mu$ is connected, see \cite[Remark~2.19]{derksen2020maximum}. Consider the map $\mu: \GL(\alpha)_\sigma \rightarrow \C^\times$ given by $\mu((g_x,g_{y_1}, \dots, g_{y_k})) = \det(g_x)^{\mu(x)} \cdot \prod_{i=1}^k \det(g_{y_i})^{\mu(y_i)}$. Then the kernel of $\mu$ is $\GL(\alpha)_\mu$ and the image is finite as it is contained in the set of $d^{th}$ roots of unity. This means that the identity component of $\GL(\alpha)_\sigma$ is $\GL(\alpha)_\mu$. Thus $\GL(\alpha)_\sigma$-semistability/polystability/stability is the same as $\GL(\alpha)_\mu$-semistability/polystability/stability by Lemma~\ref{lem:conn-component-stability}. By Proposition~\ref{gen.stable.transfer}, we deduce that generic semistability/polystability/stability for the action of $\GL(\alpha)_\mu$ on $\Rep(\Delta[k], \alpha))$ is equivalent to generic semistability/polystability/stability for the action of $(\GL(\alpha)_\mu)_\R$ on $(\Rep(\Delta[k], \alpha)))_\R$.

A similar argument as above will show that $(\GL(\alpha)_\mu)_\R$ is a subgroup of finite index in $(\GL(\alpha)_\sigma)_\R$. Hence their identity components are the same, so generic semistability/polystability/stability for the action of $(\GL(\alpha)_\mu)_\R$ on $(\Rep(\Delta[k], \alpha)))_\R$ is equivalent to generic semistability/polystability/stability for the action of $(\GL(\alpha)_\sigma)_\R$ on $(\Rep(\Delta[k], \alpha)))_\R$, again by Lemma~\ref{lem:conn-component-stability}.

%(see also \cite[Remark~3.11]{AKRS} for a similar argument). We will give a brief sketch.
 
Thus, $\alpha$ being $\sigma$-semistable/polystable/stable is the same as generic\\
semistability/polystability/stability for the action of $(\GL(\alpha)_\sigma)_\R$ on $(\Rep(\Delta[k], \alpha)))_\R$. Thus Proposition~\ref{prop:ipca-to-gaussianmodel} and Corollary~\ref{cor:ipca-Malpha} immediately give the first two statements and the backwards direction of the last statement. So, only the forward direction of the last statement remains, which we prove by contradiction.

Suppose $\alpha$ is not $\sigma$-stable. If $\alpha$ is not $\sigma$-polystable, then we do not have generic existence of MLE does not exist so clearly we cannot have generic existence of a unique MLE. So, we can assume $\alpha$ is not $\sigma$-stable, but $\sigma$-polystable. So we do not have generic indecomposability in $\Rep(Q,\alpha)_\C$. By Lemma~\ref{lem:transfer-gen-indec}, we do not have generic indecomposability in $\Rep(Q,\alpha)_\R$. By Lemma~\ref{lem:dec-compact}, we do not have generically have a compact stabilizer. Thus, by Lemma~\ref{lem:rect-AKRS}, we do have generically have the existence of a unique MLE.
\end{proof}

\begin{remark}
In the last paragraph of the proof above, we are careful to use phrases like ``we do not have generic property P" instead of ``we generically have the property of not P." There is a subtle difference in the two. The former says that the set of points satisfying P does not contain a Zariski-open dense subset and the latter says that the set of points satisfying P is contained in a lower dimensional subvariety. 
\end{remark}

\subsection*{Acknowledgements}

CF thanks Ankur Moitra for mentioning iPCA and Tiffany M. Tang for helpful conversations. The authors thank Michael Walter for correcting a subtle point about the time complexity of our algorithm.

\bibliographystyle{plain}
\bibliography{Notes}

\end{document}